\DeclareMathAlphabet{\mathpzc}{OT1}{pzc}{m}{it}
\newtheorem{theorem}{Theorem}[section]
\newtheorem{conjecture}{Conjecture}[section]
\newtheorem{definition}{Definition}[section]
\renewcommand{\d}[2]{\frac{d #1}{d #2}} 
\newcommand{\bigO}{\ensuremath{\mathcal{O}}}
\newcommand{\subjclass}[1]{MSC2010 numbers: #1}
\newcommand{\keywords}[1]{Key words and phrases: #1}
\def\blx@maxline{77}
\newlength{\spc} 
\title{Computational Method for Phase Space Transport with Applications to Lobe Dynamics and Rate of Escape}
\author[1]{Shibabrat Naik}
\author[2]{Francois Lekien}
\author[3]{Shane D. Ross}
\affil[1,3]{Engineering Mechanics Program\\ 
	Department of Biomedical Engineering \& Mechanics, Virginia Tech\\ Blacksburg, VA-24061, USA}
\affil[2]{\'Ecole Polytechnique, Universit\'e Libre de Bruxelles\\ B-1050 Brussels, Belgium}
\date{}
\begin{document}


%
%



\maketitle

\footnotetext[1]{E-mail: shiba@vt.edu}
\footnotetext[2]{E-mail: lekien@ulb.ac.be}
\footnotetext[3]{E-mail: sdross@vt.edu}

\begin{abstract}
	Lobe dynamics and escape from a potential well are general frameworks introduced to study phase space transport in chaotic dynamical systems. 
	While the former approach studies how regions of phase space are transported by reducing the flow to a two-dimensional map, the latter approach studies the phase space structures that lead to critical events by crossing periodic orbit around saddles.  
	Both of these frameworks require computation with curves represented by millions of points---computing intersection points between these curves and area bounded by the segments of these curves---for quantifying the transport and escape rate.
	We present a theory for computing these intersection points and the area bounded between the segments of these curves based on a classification of the intersection points using equivalence class.
	We also present an alternate theory for curves with nontransverse intersections and a method to increase the density of points on the curves for locating the intersection points accurately.
	The numerical implementation of the theory presented herein is available as an open source software called \emph{Lober}. We used this package to demonstrate the application of the theory to lobe dynamics that arises in fluid mechanics, and rate of escape from a potential well that arises in ship dynamics.
\end{abstract}



\medskip
\subjclass{37J35, 37M99, 65D20, 65D30, 65P99}

\bigskip
\keywords{chaotic dynamical systems, numerical integration, phase space transport, lobe dynamics}

\section{Introduction}
In chaotic dynamical systems, phase space transport is an approach for understanding and characterizing how regions of phase space move over time. 
This approach is used widely in studying escape and transition rate in classical mechanics and dynamical astronomy~\cite{Koon2000a,Dellnitz2005,Zotos2016a,Zotos2016b}, loss of global integrity in ship capsize~\cite{McRobie1991}, reaction and dissociation rate in chemical reactions~\cite{Martens1987,Gillilan1991,Toda1995}, chaotic advection in fluid mechanics~\cite{Aref1984,Aref1988}, wake generation behind a cylinder in a fluid flow~\cite{duan1999lagrangian}, and transport in geophysical flow~\cite{samelson2006lagrangian}.
Furthermore, phase space transport is also relevant for prediction, control, and design in a myriad of natural processes and engineering systems; see Ref.~\cite{Ross2012} for an overview. 
In this approach, lobe dynamics, introduced in Ref.~\cite{Rom-kedar1990}, is a geometric framework for studying the global transport in 1 and 2 degree-of-freedom (DOF) Hamiltonian systems that can be reduced to two-dimensional maps. 
Moreover, there has been few attempts at extending lobe dynamics to higher dimensional Hamiltonian systems with small perturbation where the existence of normally hyperbolic invariant set and its associated codimension-1 stable and unstable manifolds can be guaranteed; see Refs.~\cite{Wiggins1990,Gillilan1991,beigie1995multiple,Beigie1995,lekien2007lagrangian} for details. 
In this article, we present a theory and numerical methods which are motivated by lobe dynamics in 2-DOF Hamiltonian systems that can be reduced to two-dimensional map by using suitable Poincar\'e section, or return map, or time-T map. 
Hence, the methods developed herein are applicable to a diverse array of problems in physical and engineering sciences. 

Following the developments in \cite{Rom-kedar1990,Rom-kedar1990a}, lobe dynamics can be stated as a systematic study of a fates and histories of a set of initial conditions. Formally speaking, the two-dimensional phase space $M$ of a Poincar\'e map $f$ (in general, this can a return map or a time-T map) can be partitioned into regions with boundaries consisting of parts of the boundary of $M$ (which may be at infinity) and/or segments of stable and unstable invariant manifolds of hyperbolic fixed points, $p_i, i=1,...,N$, as shown schematically in Fig.~\ref{fig:regions}(a). When the manifolds $W^u_{p_i}$ and $W^s_{p_j}$ are followed out on a global scale, they often intersect in primary intersection points or \emph{pips}, for example $\{ q_1, q_2, q_3, q_4, q_5, q_6 \}$ as in Fig.~\ref{fig:regions}(b), and secondary intersection points or \emph{sips}, for example $\{ q_7,q_8,q_9,q_{10},q_{11},q_{12} \}$ in Fig.~\ref{fig:regions}(b). More precisely, an intersection point is called a pip if the curves connecting the point and the hyperbolic points intersect only at that point, and is called a sip otherwise. These intersections allow one to define boundaries between regions $\{R_i\}$, as illustrated in Fig.~\ref{fig:regions}(b). Moreover, the transport between regions of phase space can be completely described by the dynamical evolution of parcels of phase space enclosed by segments of the stable and unstable manifolds called \emph{lobes}.

\begin{figure}[!ht]
	\begin{tabular}{cc}
		\includegraphics[width=0.45\textwidth]{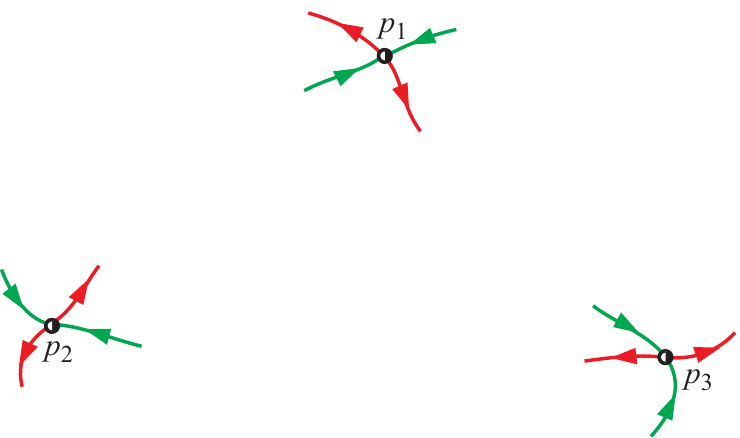} &
		\includegraphics[width=0.45\textwidth]{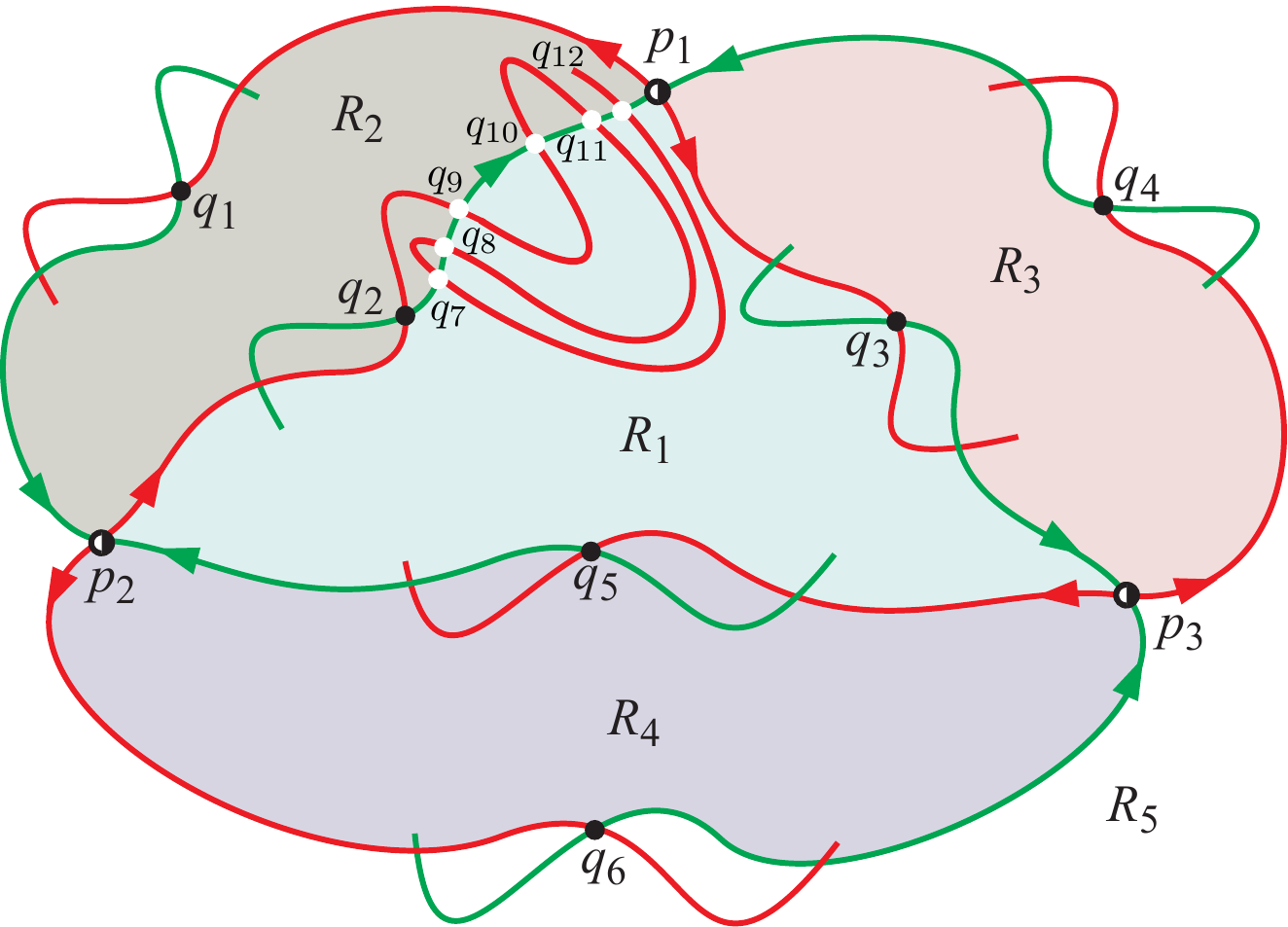}\\
		{\footnotesize (a)} & {\footnotesize (b)}  
	\end{tabular}
	\caption{(a) Pieces of the local unstable and stable manifolds, $W^u(p_i)$ (red) and $W^s(p_i)$ (green) of saddle fixed points $\{p_i\}$. 
		(b) When the manifolds $W^u(p_i)$ and $W^s(p_i)$ are followed out on a global scale, they often intersect in primary intersection points $\{q_i\}$. These intersections allow one to define boundaries between regions $\{R_i\}$.
	}
	\label{fig:regions}       
\end{figure}



Classically, invariant manifolds of hyperbolic fixed points are computed for as long as possible and lobes are extracted from the two curves \cite{Rom-kedar1990,Dellnitz2005}. However, it is possible to get more lobes, hence compute transport for a much longer period of time, by integrating the boundaries
directly, particularly in the complex case of multiple, self-intersecting, lobes~\cite{Dellnitz2005}.
In this approach, the notion of pips and sips are not suitable, and thus we propose a generalization of definition of lobes for two intersecting closed curves.
When applying lobe dynamics to transport problems, it is of eventual interest to quantify volume of phase space $M$ that crosses the boundary with higher iterates of the Poincar\`e map $f$. This is typically achieved in multiple ways:
\begin{itemize}
	\item Distribute test points inside the phase space region of interest and computing the iterate/s required for each to escape. This will also need to take care of the re-entrainment due to the underlying turnstile (a pair of lobe that moves points across a boundary) mechanism~\cite{Mackay1984,Rom-kedar1990,Wiggins1992chaotic,RomKedar1999}. 
	
	\item Constructing a functional of the nonlinear system of vector field which measures area between the manifolds as parametrized by time. This is a semi-analytical approach and only remains valid near small perturbations, for example Melnikov method for mixing, stirring, optimal phase space flux, and action-integral method~\cite{Meiss1992,Sandstede2000,Balasuriya2005,Balasuriya2006,Mosovsky2011,Balasuriya2014}. 
	
	\item Following the boundaries of separatrices/manifolds as it is evolved in time and compute set operations with its pre-images/images. This is a more general but surely a difficult approach due to the stretching and folding of the curves that are involved in such computations~\cite{Mitchell2003,Mitchell2003a,Mitchell2006}.
\end{itemize}
%

For concreteness, let us consider the lobes formed due to a heteroclinic tangle as shown in Fig.~\ref{fig:regions}(b) or due to a homoclinic tangle as shown in Fig.~\ref{fig:lobes_pips_sips}. In both cases, the lobes are regions of phase space bounded by curves oriented in opposite directions, this is because the segments bounding the lobes are stable and unstable manifold of the hyperbolic fixed points, $p_1$ and $p_2$ (Fig.~\ref{fig:regions}(b)), or hyperbolic fixed point $p$ (Fig.~\ref{fig:lobes_pips_sips}). Topologically, the lobe areas can be represented by the difference of the area bounded by the individual curve. For example, let us consider two closed curves $C_1$ and $C_2$ in $\mathbb{R}^2$, oriented in counter-clockwise and clockwise direction bounding the two-dimensional subsets of $\mathbb{R}^2$, $A_1$ and $A_2$, respectively, as in Fig.~\ref{fig:lobex_separated}, then the lobe area can be represented by the set difference of the area, that is $A_2 \setminus A_1$ as shown in Fig.~\ref{fig:A2minusA1_shaded}. Thus, studying phase space transport using lobe dynamics boils down to the fundamental problem of \emph{determining the area $A_1\setminus A_2$ and $A_2 \setminus A_1$ defined by these two curves}. We note that the area $A_1 \setminus A_2$ can be computed once $A_2 \setminus A_1$ is known by using
\begin{equation}
A_1 \setminus A_2 = A_1 - (A_2 - A_2 \setminus A_1)
\end{equation}
So, we approach this problem by developing methods that can calculate the area bounded by closed curves which are typically generated by intersecting manifolds in phase space transport.
\begin{figure}[!ht]
	\centering
	\subfigure[]{\includegraphics[width=0.3\textwidth]{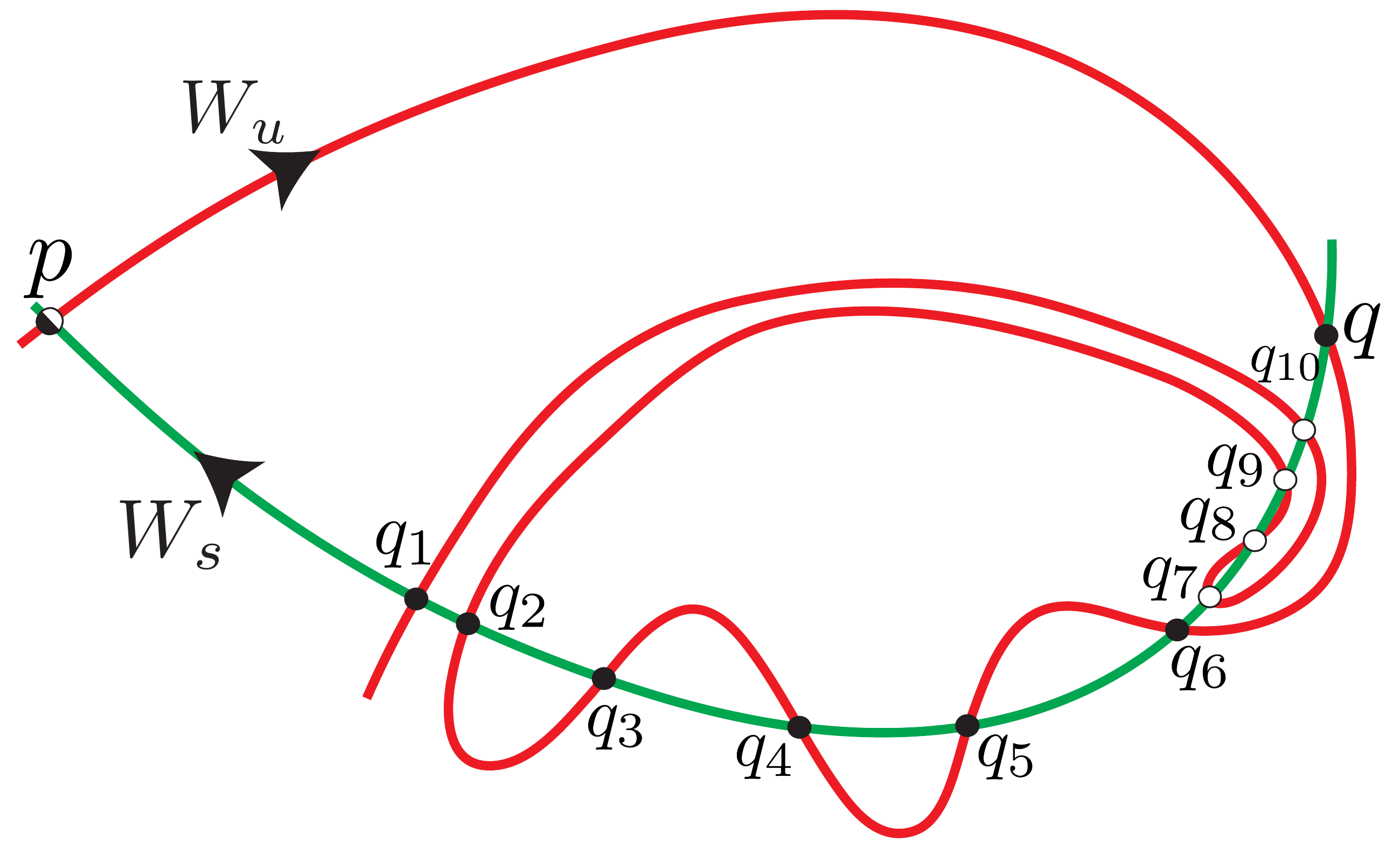}\label{fig:lobes_pips_sips}}
	\subfigure[]{\includegraphics[width=0.38\textwidth]{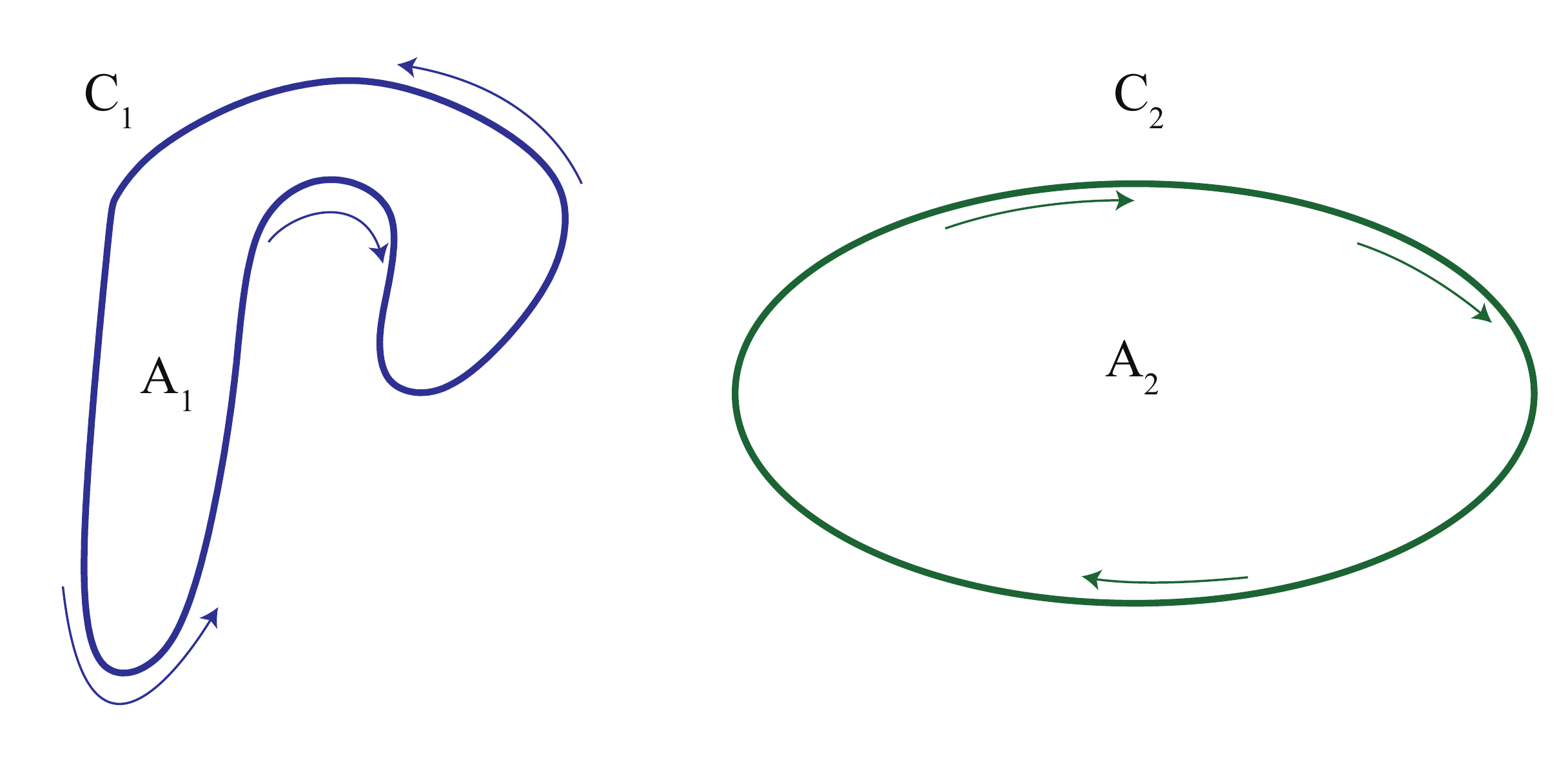}\label{fig:lobex_separated}}
	\subfigure[]{\includegraphics[width=0.22\textwidth]{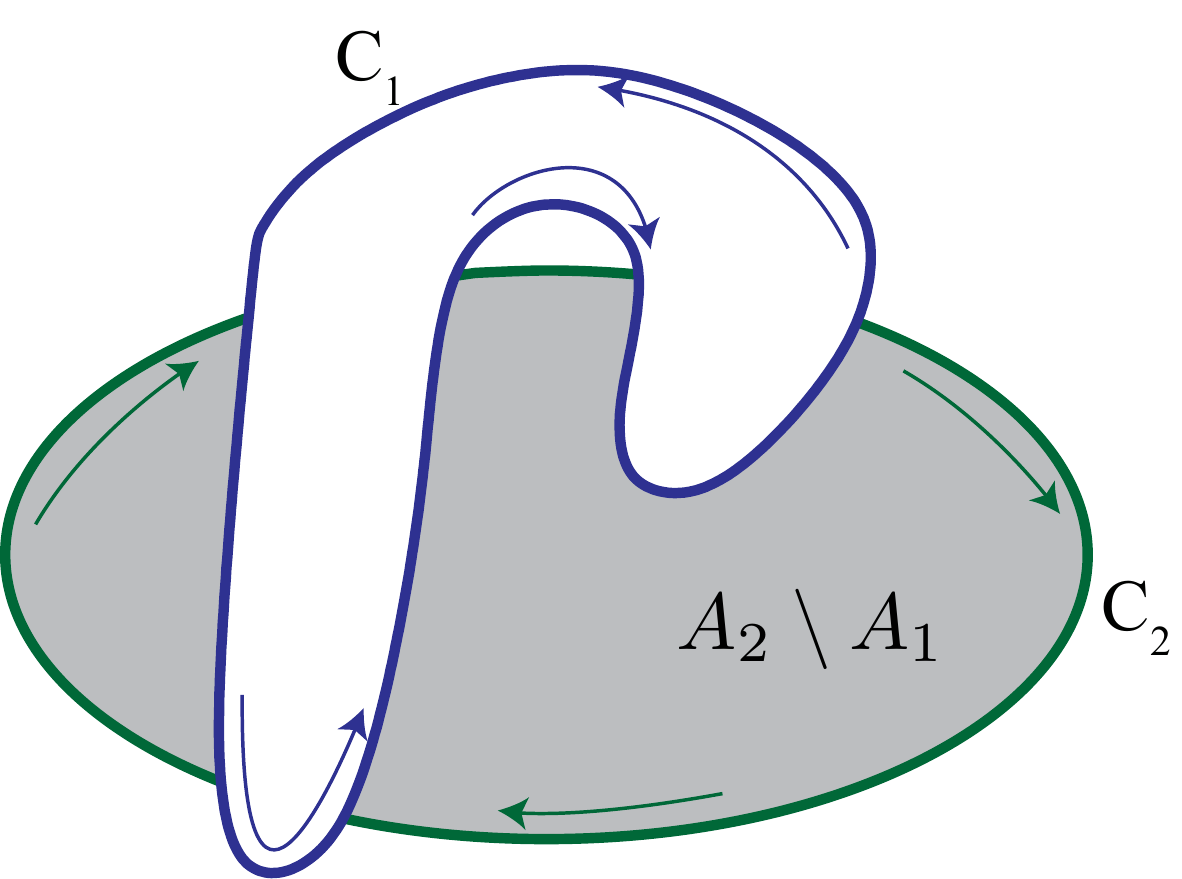}\label{fig:A2minusA1_shaded}}
	\caption{~\protect\subref{fig:lobes_pips_sips} Shows schematically lobes, pips and sips that are formed in a homoclinic tangle. ~\protect\subref{fig:lobex_separated} Shows an example of two closed curves, $C_1$ and $C_2$ in $\mathbb{R}^2$, oriented in counter-clockwise and clockwise direction which bound the two-dimensional subsets of $\mathbb{R}^2$, $A_1$ and $A_2$, respectively.~\protect\subref{fig:A2minusA1_shaded} Shows schematically the area $A_2 \setminus A_1$ which represents a lobe area.}
\end{figure}
%

The main purpose of the paper is to expound on mathematical techniques useful for transport in systems that can be reduced to two-dimensional maps. We present examples in the context of time-periodic and autonomous Hamiltonian systems, however, the methods are applicable in finite-time situations such as numerical simulation and experiments in which one wants to compute the amount of fluid transported from one region of phase space to another.
In the \S~\ref{sect:curve-area}-\ref{sect:inter-pts-lobes}, we derive the theory and numerical methods to separate the intersection points between the two curves into equivalence classes and derive the area of the lobes defined by the two curves. In \S~\ref{sect:non-trans-inter}, we present an alternative method that can be used when portions of the two curves have non-transverse intersections, which arises in the case of an iterated boundary parametrized using an intersection point. In \S~\ref{sect:applications}, the numerical methods implemented as a software package \emph{Lober} are applied to two example problems of interest in the dynamical systems literature: chaotic fluid transport in oscillating vortex pair flow and escape from a potential well in capsize of a ship. 


\section{Curve Area}\label{sect:curve-area}
\subsection{One-Dimensional Integrals for Areas}

Let us define $A_1 = int(C_1)$ and $A_2 = int(C_2)$ as the area enclosed by $C_1$ and $C_2$, respectively and denote this using the standard Lebesgue measure for $\mu$ in $\mathbb{R}^2$ as $[A] = \mu(A)$. The area of each region can be computed as:

\begin{equation}
[A_i] = \int \! \! \! \int _{A_i} dA = \frac{1}{2} \oint _{C_i} y dx - x dy \; ,
\label{eq1darea}
\end{equation} by applying Green's theorem to the vector field
\begin{equation}
\bar{f} = \left( \begin{array}{c}
x\\
y
\end{array} \right) \Longrightarrow \nabla \cdot \bar{f} = 2 \; .
\end{equation}
Eq.~(\ref{eq1darea}) allows us to reduce the computation of the area of a complicated region to a one-dimensional integral over its boundary.
Notice that the sign of the integral is to be reversed if the curves are oriented clockwise. Our hypothesis stating that the curves are oriented in a counter-clockwise direction is equivalent to
\begin{equation}
\forall i : \oint _{C_i} y dx - x dy \ge 0 .
\end{equation}

\subsection{Numerical Methods}
\label{contourint}
We assume that the curves $C_1$ and $C_2$ are given in terms of a sequence of points $(x_i,y_i)$. We want an exact evaluation of the integral in Eq.~\eqref{eq1darea} for curves given by piecewise linear segments connecting $(x_i,y_i)$ and $(x_{i+1},y_{i+1})$. By defining
\begin{equation}
\left\{ \begin{array}{l}
x=x_i + t (x_{i+1} - x_i) \; , \\
y=y_i + t (y_{i+1} - y_i) \; ,
\end{array} \right.
\end{equation} 
as parametric form of the segment, we have
\begin{equation}
\int _{x_i,y_i}^{x_{i+1},y_{i+1}} y dx - x dy = \int _{0} ^{1} (y_i x_{i+1} - x_i y_{i+1}) dt = y_i x_{i+1} - x_i y_{i+1} \; .
\end{equation} 
As a result the exact value of the contour integral for a polygon is given by
\begin{equation}
[A_i] = \frac{1}{2} \oint _{C_i} y dx - x dy = \frac{1}{2} \sum\limits _{i=1}^{i=N} \left( y_i x_{i+1} - x_i y_{i+1} \right) \; .
\label{intpoly}
\end{equation}
It is to be noted that for computing the contour integral the point $i=1$ is repeated as $i={N+1}$, so that the curve is closed, and Eqn.~\ref{intpoly} represents a contour integral. This numerical method forms the core of the software package \emph{Lober}~\footnotemark[1] along with the implementation of the numerical method derived in \S~\ref{sect:curve-area},\ref{sect:inter-pts-lobes}, and \ref{sect:non-trans-inter}.

\footnotetext[1]{This is available as an open-source repository in Github at \url{https://github.com/Shibabrat/curve_densifier} with implementation in C and additional wrapper scripts in MATLAB.}

\section{Intersection Points and Lobe area}\label{sect:inter-pts-lobes}

Lobe dynamics is based on the geometry of a stable manifold, $W^s_{p_+}$, and an unstable manifold, $W^u_{p_-}$, of hyperbolic fixed points, $p_+$ and $p_-$, their intersection points, and areas enclosed by the segments of the invariant manifolds. We note that when $p_+ = p_- = p$, $p$ is called homoclinic point, and when $p_+ \neq p_-$, $p_-$ and $p_+$ are called heteroclinic points. Following the definitions in~\cite{Rom-kedar1990,Wiggins1990}, a point $q_i \in W^u_p \cap W^s_p$ is called a \textit{primary intersection point} (pip) if the segment $U[q_i,p]$ on $W^u_p$ connecting $p$ and $q_i$ and the segment $S[q_i,p]$ on $W^s_p$ connecting $p$ and $q_i$ intersects only at $q_i$, other than the point $p$. A point $q_i$ is a \textit{secondary intersection point} (sip) if there are other intersection points on the segment $U[q_i,p]$ and $S[q_i,p]$. If $q_1$ and $q_2$ are two adjacent pips, then the area enclosed by the segments $U[q_2,q_1]$ and $S[q_2,q_1]$ is called a \textit{lobe}. This is shown in Fig.~\ref{fig:lobes_pips_sips}, where $\{q_1,\ldots,q_6\}$ are pips, and $\{q_7,\ldots,q_{10}\}$ are sips. Furthermore, Fig.~\ref{fig:regions}(b) shows the case when the unstable and stable manifolds are associated with two different hyperbolic fixed points.
Thus, computing lobe areas require the knowledge of intersection points as accurately as possible and also identifying them as either pips or sips. 
Specifically, for closed intersecting curves $C_1$ and $C_2$ (see Fig.~\ref{fig:A2minusA1_shaded}) encountered in phase space transport problems, we separate the set of intersection points between the two curves into classes of equivalence. For two curves corresponding to the invariant manifolds of a hyperbolic fixed point, each class of equivalence corresponds exactly to the two {\it pips} and {\it sips} on the segment of the invariant manifold. Then, we can compute the lobe area using the contour integral form of the Eqn.~\eqref{intpoly} which should be close to the lobe area given a well-resolved manifold.

\subsection{Intersection Points}

In this section, we assume that there are only transverse intersections of the curves. A numerical algorithm for efficiently computing the intersection points is presented below. The two curves are closed, so the number of intersection points must be even. We compute the $2 N$ intersection points $p_i$ between the two curves $C_1$ and $C_2$. The unit tangent vector to the curve $C_j$ at point $p_i$ is denoted $\mathbf{1}_j(p_i)$. For each intersection point, $p_i$, we define
\begin{equation}
\rho (p_i) =  \textrm{sgn}(\sin(\theta)) \frac{ ||\mathbf{1}_1 (p_i) \times \mathbf{1}_2 (p_i)||}{\sin(\theta) } = \textrm{sgn}(\sin(\theta)) ||  \mathbf{1}_1 (p_i) || || \mathbf{1}_2 (p_i) || \; .
\label{eqrho}
\end{equation} 
where $\theta$ is the angle between the tangents $\mathbf{1}_1(p_i)$ and $\mathbf{1}_2(p_i)$, as shown in Fig.~\ref{fig:rho_pi_c1_c2}.
\begin{figure}[!ht]
	\centering
	\includegraphics[width=0.45\textwidth]{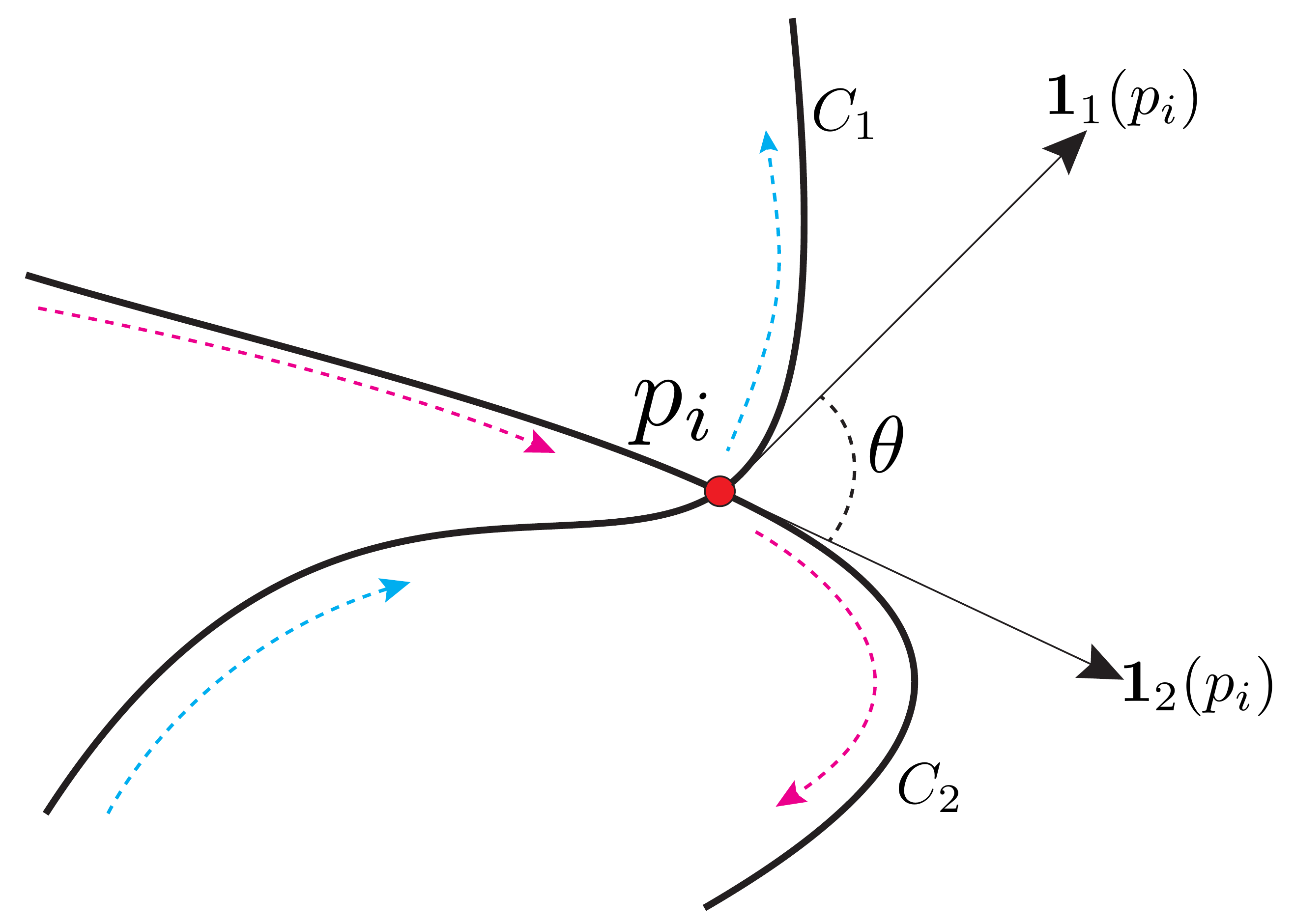}
	\caption{Shows the parameters involved in computing the orientation of an intersection point, $p_i$, when traversing along the curves in the direction shown in dashed arrows.}
	\label{fig:rho_pi_c1_c2}
\end{figure}
The quantity $\rho(p_i)$ represents the \emph{orientation} of a given point $p_i$. We note that when the intersections are transverse, the denominator of Eq.~(\ref{eqrho}) is non-zero and  $\rho (p_i) \in \left\{ -1 , +1\right\}$. Fig.~\ref{lobex1} shows two curves, their intersection points, and the value of $\rho (p_i)$ for each point $p_i$. The closed curves shown in Fig.~\ref{lobex1} can be obtained by extracting the lobes from the intersection of stable and unstable manifolds or intersection of a cylindrical manifold with a plane. It is to be noted that the orientation of the closed curves is related to the geometry of the invariant manifolds, and as such one might have a counter-clockwise and a clockwise oriented curve. 

\begin{figure}[!ht]
	\centering\includegraphics[width=0.95\textwidth]{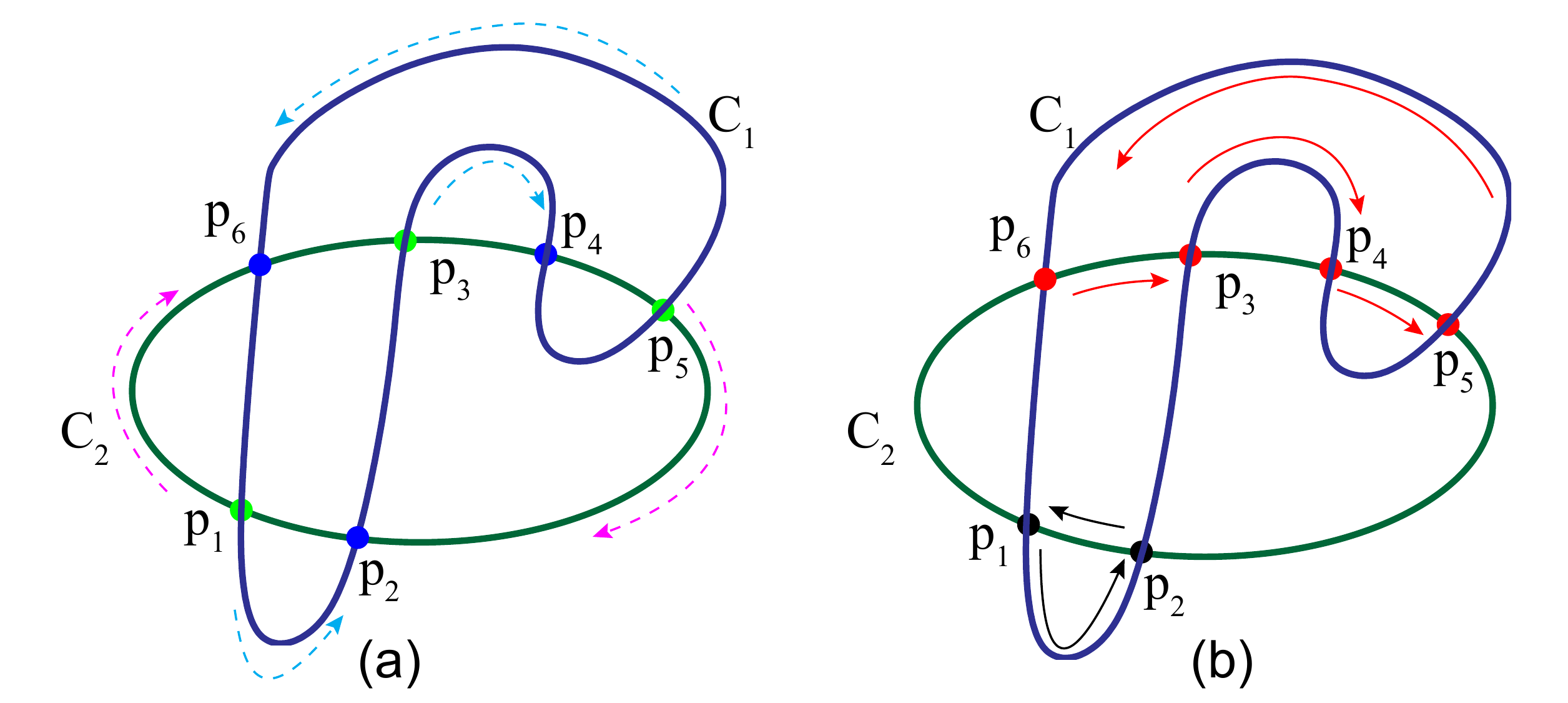} 
	\caption{Example of closed curves oriented in counter-clockwise (cyan arrows) and clockwise direction (magenta arrows) with transverse intersections at $p_1, p_2, \ldots, p_6$. (a) Green intersection points have $\rho (p_i) =1$. Blue intersection points have $\rho (p_i) =-1$. (b) Intersection points of the same color belong to the same equivalency class. The arrows represent the one-to-one and onto relationship $\sigma$ between the intersection points and the points joining the same equivalency class can either be a \textit{knob} (black arrows) or \textit{handle} (red arrows).}
	\label{lobex1}
\end{figure}

\subsection{Classes of Intersection Points}

The segments of curve between intersection points are most important to our computation, so we define $C_1^+[p_i , p_j]$ and $C_2^+ [p_i, p_j]$ as the counter-clockwise segments of respectively $C_1$ and $C_2$ between the points $p_i$ and $p_j$. Similarily, $C_1^-[p_i , p_j]$ and $C_2^- [p_i, p_j]$ are the clockwise segments of respectively $C_1$ and $C_2$ between the points $p_i$ and $p_j$.
We define the positive and negative adjacency on $C_1$ by
\begin{equation}
A^{\pm }_{C_1} (p_i,p_j) = \left\{
\begin{array}{ll}
1&\text{when} \; \; 1\leq k \leq 2 N : k\neq i \text{ and } k\neq j \Longrightarrow p_k \notin C^{\pm }_1[p_i , p_j] \\
0&\text{otherwise}
\end{array}
\right. \; .
\end{equation} and the adjacency on $C_2$ as
\begin{equation}
A^{\pm }_{C_2} (p_i,p_j) = \left\{
\begin{array}{ll}
1&\text{when} \; \; 1\leq k \leq 2 N : k\neq i \text{ and } k\neq j \Longrightarrow p_k \notin C_2^{\pm }[p_i , p_j] \\
0&\text{otherwise}
\end{array}
\right. \; .
\end{equation}

The geometric interpretation of adjacency is that when traversing a curve in counter-clockwise or clockwise (that is, positive or negative sense) direction, the point $p_j$ with adjacency value of 1 is the point next to the point $p_i$. 


The objective of this section is to determine a   generalized notion of a lobe. Intuitively, lobes are bounded by ``segments of the curves $C_1$ and $C_2$ turning in opposite directions on each curve''. We formalize this idea by defining the signed adjacency
\begin{equation}
\gamma (p_i , p_j) = \frac{\rho (p_i) + 1}{2} A^+_{C_1} (p_i,p_j) - \frac{\rho (p_i) - 1}{2} A^-_{C_2} (p_i,p_j) \; .
\label{eqgammadef}
\end{equation} 
which becomes $\gamma (p_i, p_j) = A^+_{C_1} (p_i,p_j)$ for $\rho (p_i) = 1$ and $\gamma (p_i, p_j) = A^-_{C_2} (p_i,p_j)$ for $\rho (p_i) = -1$.
Table~\ref{tab:ac1plus},~\ref{tab:ac2minus}, and~\ref{tab:gamma} show the value of the functions $A^+_{C_1}$, $A^-_{C_2}$, and $\gamma $, respectively, for the example of intersecting curves shown in Fig.~\ref{lobex1} .

\begin{table}[!h]
	\begin{center}
		\begin{tabular}{|c|c|c|c|c|c|c|}\hline
			$A^+_{C_1} (\; \downarrow \; ,\rightarrow ) $&$p_1$&$p_2$&$p_3$&$p_4$&$p_5$&$p_6$\\\hline
			$p_1$&0&1&0&0&0&0\\\hline
			$p_2$&0&0&1&0&0&0\\\hline
			$p_3$&0&0&0&1&0&0\\\hline
			$p_4$&0&0&0&0&1&0\\\hline
			$p_5$&0&0&0&0&0&1\\\hline
			$p_6$&1&0&0&0&0&0\\\hline
		\end{tabular}
	\end{center}
	\caption{Function $A^+_{C_1} $ for the intersection points between the curves of Fig.~\ref{lobex1}. The lines and columns correspond respectively to the first and second argument of $A^+_{C_1} $.}
	\label{tab:ac1plus}
\end{table}

\begin{table}[!h]
	\begin{center}
		\begin{tabular}{|c|c|c|c|c|c|c|}\hline
			$A^-_{C_2} (\; \downarrow \; ,\rightarrow )$&$p_1$&$p_2$&$p_3$&$p_4$&$p_5$&$p_6$\\\hline
			$p_1$&0&0&0&0&0&1\\\hline
			$p_2$&1&0&0&0&0&0\\\hline
			$p_3$&0&0&0&1&0&0\\\hline
			$p_4$&0&0&0&0&1&0\\\hline
			$p_5$&0&1&0&0&0&0\\\hline
			$p_6$&0&0&1&0&0&0\\\hline
		\end{tabular}
	\end{center}
	\caption{Function $A^-_{C_2} $ for the intersection points between the curves of Fig.~\ref{lobex1}. The lines and columns correspond respectively to the first and second argument of $A^-_{C_2} $.}
	\label{tab:ac2minus}
\end{table}

\begin{table}[!h]
	\begin{center}
		\begin{tabular}{|c|c|c|c|c|c|c|}\hline
			$\gamma \; (\; \downarrow \; ,\rightarrow ) $&$p_1$&$p_2$&$p_3$&$p_4$&$p_5$&$p_6$\\\hline
			$p_1$&0&1&0&0&0&0\\\hline
			$p_2$&1&0&0&0&0&0\\\hline
			$p_3$&0&0&0&1&0&0\\\hline
			$p_4$&0&0&0&0&1&0\\\hline
			$p_5$&0&0&0&0&0&1\\\hline
			$p_6$&0&0&1&0&0&0\\\hline
		\end{tabular}
	\end{center}
	\caption{Function $\gamma $ for the intersection points between the curves of Fig.~\ref{lobex1}. The lines and columns correspond respectively to the first and second argument of $\gamma $.}
	\label{tab:gamma}
\end{table}

In order to separate the intersection points in disjoint classes, we define an equivalency relation between the intersection points by
\begin{definition}
	$p_i \sim p_j$ iff there exists a sequence of $K$ intersection points $p_{\alpha _k}$ such that
	\begin{equation}
	\left\{
	\begin{array}{l}
	p_{\alpha _1} = p_i \; ,\\
	p_{\alpha _K} = p_j \; ,\\[3pt]
	\Pi _{k=1}^{K-1} \gamma (p_k , p_{k+1}) = 1 \; .
	\end{array}
	\right.
	\label{eqequrel}
	\end{equation}
\end{definition}

Thus, the sequence of $K$ intersection points bound a domain while traversing the curve in clockwise or counter-clockwise direction.
We have the following properties
\begin{theorem}[Reflexivity of $\sim $]
	\begin{equation}
	\forall i : p_i \sim p_i \; ,
	\end{equation} 
	\label{thsim1}
\end{theorem}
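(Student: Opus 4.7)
The plan is to verify the definition of $\sim$ directly, with the main question being which value of $K$ to use in the defining sequence. The cleanest route is to take $K = 1$, so the sequence consists of the single intersection point $p_i$ itself. Then both endpoint conditions $p_{\alpha_1} = p_i$ and $p_{\alpha_K} = p_i$ hold trivially, and the product condition $\prod_{k=1}^{K-1} \gamma(p_{\alpha_k}, p_{\alpha_{k+1}}) = 1$ reduces to the empty product $\prod_{k=1}^{0}$, which equals $1$ by the standard convention. That finishes the proof in one line.

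If, however, the authors implicitly intend $K \geq 2$ (so that the sequence must contain a genuine transition), the argument proceeds by exploiting the bijective structure of $\gamma$ already evident in Tables~\ref{tab:ac1plus}--\ref{tab:gamma}. First I would observe that $A^+_{C_1}(p_i, \cdot)$ and $A^-_{C_2}(p_i, \cdot)$ each pick out a unique successor of $p_i$ along the respective curve (the cyclic next intersection in the counter-clockwise sense on $C_1$, or clockwise sense on $C_2$), so the definition~\eqref{eqgammadef} selects exactly one intersection point $\sigma(p_i)$ with $\gamma(p_i, \sigma(p_i)) = 1$ according to whether $\rho(p_i) = +1$ or $-1$. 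A symmetric argument shows $\sigma$ is injective, hence a bijection of the finite set of $2N$ intersection points.

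With $\sigma$ in hand, I would invoke finiteness: for each $p_i$ there exists a least $m \geq 1$ with $\sigma^m(p_i) = p_i$. The chain
\begin{equation}
p_i, \; \sigma(p_i), \; \sigma^2(p_i), \; \ldots, \; \sigma^{m-1}(p_i), \; p_i
\end{equation}
then satisfies the endpoint conditions, and its product of $\gamma$ values telescopes to $1^m = 1$, establishing $p_i \sim p_i$.

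The main obstacle is not mathematical depth but a matter of convention: nailing down whether $K = 1$ is admissible in the sequence (in which case reflexivity is an immediate empty-product statement) or whether one must build a nontrivial cycle via the successor map $\sigma$. In either case, no heavy machinery is required; the result functions mainly to justify that $\sim$ is well-posed as an equivalence relation, with symmetry and transitivity to follow.
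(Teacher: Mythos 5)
Your primary argument is exactly the paper's proof: take $K=1$ with $\alpha_1 = i$, so the endpoint conditions hold trivially and the product in Eq.~(\ref{eqequrel}) is empty, hence equal to $1$. The fallback construction via the successor map $\sigma$ is not needed here (the paper admits $K=1$), though it correctly anticipates the machinery the paper uses later in the proof of symmetry.
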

\begin{proof}
	Letting $K=1$ and $\alpha _1 = i$ in Eq.~(\ref{eqequrel}) gives the desired result. $\Box $
\end{proof}

\begin{theorem}[Symmetry of $\sim $]
	\begin{equation}
	\forall i,j : p_i \sim p_j \Longrightarrow p_j \sim p_i \; ,
	\end{equation} 
	\label{thsim2}
\end{theorem}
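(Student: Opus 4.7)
The plan is to show that the relation $\gamma(p_i,p_j)=1$ actually encodes a \emph{permutation} of the set of intersection points, and then exploit the cycle structure of a finite permutation to reverse any valid sequence.

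First I would observe that by its definition in Eq.~(\ref{eqgammadef}), $\gamma$ takes values in $\{0,1\}$, so a product $\prod_k \gamma(p_{\alpha_k},p_{\alpha_{k+1}})=1$ forces every individual factor to equal $1$. Hence $p_i\sim p_j$ is equivalent to the existence of a chain $p_{\alpha_1}=p_i,\ldots,p_{\alpha_K}=p_j$ with $\gamma(p_{\alpha_k},p_{\alpha_{k+1}})=1$ for every $k$. Next, I would show that for each intersection point $p_i$ there is exactly one $p_j$ with $\gamma(p_i,p_j)=1$: if $\rho(p_i)=+1$ this $p_j$ is the unique positive-adjacent neighbour on $C_1$, and if $\rho(p_i)=-1$ it is the unique negative-adjacent neighbour on $C_2$. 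Call this assignment $\sigma(p_i):=p_j$, so that a valid chain is exactly an orbit segment of $\sigma$.

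The core step is to prove that $\sigma$ is a bijection of the finite set of $2N$ intersection points; since the set is finite, injectivity suffices. Here I would use the key geometric fact that the orientation index $\rho$ \emph{alternates} between consecutive intersection points along either curve, because the curves cross transversely and the sign of $\mathbf{1}_1\times \mathbf{1}_2$ flips at each successive crossing. With this alternation in hand, I check injectivity of $\sigma$ case by case: two preimages $p_i\neq p_{i'}$ of the same point $p_j$ with $\rho(p_i)=\rho(p_{i'})=+1$ would both be the unique positive predecessor of $p_j$ on $C_1$; the case $\rho(p_i)=\rho(p_{i'})=-1$ is analogous on $C_2$; and the mixed case $\rho(p_i)=+1$, $\rho(p_{i'})=-1$ is excluded because alternation on $C_1$ forces $\rho(p_j)=-1$ while alternation on $C_2$ forces $\rho(p_j)=+1$, a contradiction. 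I expect this alternation-plus-case-analysis to be the main obstacle, since it is where the geometry of the transverse intersections enters and where one must be careful about how $\rho$ behaves along each of the two curves.

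Once $\sigma$ is known to be a bijection of a finite set, it decomposes into disjoint cycles. Now suppose $p_i\sim p_j$ via a chain of length $K$; then $p_j=\sigma^{K-1}(p_i)$, so $p_i$ and $p_j$ lie in a common cycle of $\sigma$, say of length $M\ge K-1$. Setting $L=M-(K-1)+1$ and $p_{\beta_k}=\sigma^{k-1}(p_j)$ for $k=1,\ldots,L$ produces a sequence with $p_{\beta_1}=p_j$, $p_{\beta_L}=\sigma^{M}(p_i)=p_i$ and $\gamma(p_{\beta_k},p_{\beta_{k+1}})=1$ for every $k$. This sequence witnesses $p_j\sim p_i$, completing the proof. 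In short, symmetry of $\sim$ is a consequence of the permutation structure of $\sigma$; once that structure is established the reverse chain is obtained by continuing around the same cycle.
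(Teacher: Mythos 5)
Your proposal is correct and follows essentially the same route as the paper: define the unique successor map $\sigma$ via $\gamma(p_i,\sigma(p_i))=1$, show it is a bijection of the finite set of intersection points, and reverse a chain from $p_i$ to $p_j$ by continuing around the $\sigma$-cycle back to $p_i$. The only substantive difference is that you actually justify injectivity of $\sigma$ (via the alternation of $\rho$ along each curve, which ultimately rests on the Jordan-curve inside/outside argument rather than transversality alone), a step the paper merely asserts, and your index bookkeeping should read $K-1 \bmod M$ in case the chain wraps around the cycle.
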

\begin{proof}
	We notice that for any intersection point $p_i$, there is one and only one intersection point $p_j$ satisfying $A_{C_1}^+(p_i,p_j) = 1$. This is a consequence of the fact that the intersections are transverse and that there are not any self-intersections. There must be at least one segment of the curve $C_1$ leaving $p_i$ (based on a counter-clockwise orientation of $C_1$). If there was more than one segment leaving this point, the curve $C_1$ would self-intersect. Similarly, there is also one and only one intersection point satisfying $A_{C_2}^- (p_i,p_k)=1$. Since $\rho (p_i) \in \left\{ -1,1\right\}$, there is only one intersection point $p_q$ ($q=j$ or $k$) such that $\gamma (p_i , p_q) = 1$. We define $\sigma (p_i)$ as the unique intersection point satisfying
	\begin{equation}
	\gamma (p_i , \sigma (p_i)) = 1 \; .
	\end{equation} Notice that 
	\begin{equation}
	p_i \neq p_j \Longrightarrow \sigma (p_i) \neq \sigma (p_j) \; ,
	\end{equation} and $\sigma $ is therefore invertible.
	We construct the sequence $\left( p_{\alpha _i} \right)$ using
	\begin{equation}
	\left\{ \begin{array}{l}
	p_{\alpha _1} = p_i \; , \\
	p_{\alpha _k+1} = \sigma (p_{\alpha _k}) \; .
	\end{array} \right.
	\end{equation}
	Notice that the sequence $\left( p_{\alpha _i} \right)$ satisfies the third condition of Eq.~(\ref{eqequrel}) for any length. The number of intersection point is finite, so there must be an integer $K$ such that $p_{\alpha _K}$ is identical to $p_{\alpha _k}$ for $k<K$. We cut the sequence at the smallest possible $K$, so there is only one repeated element in the sequence. The repeated element must be $p_{\alpha _1}$ because $\sigma $ is invertible. As a result, the sequence $\left( p_{\alpha _i} \right)$ is the unique path from $p_i$ to $p_i$ that does not contain only $p_i$ and does not contain repeated points except for the endpoints.
	To prove the relationship, we notice that $p_i \sim p_j \Longrightarrow $ the point $p_j$ must be included in the unique sequence $\left( p_{\alpha _i} \right)$. As a result, we can take a subsequence from $p_i$ to $p_j$ in $\left( p_{\alpha _i} \right)$ and $p_j \sim p_i$. $\Box $
\end{proof}

\begin{table}[!h]
	\begin{center}
		\begin{tabular}{|c|c|c|c|}\hline
			&$\rho ( p) $& $\sigma (p)$&Class\\\hline
			$p_1$&$+1$&$p_2$&1\\
			$p_2$&$-1$&$p_1$&1\\
			$p_3$&$+1$&$p_4$&2\\
			$p_4$&$-1$&$p_5$&2\\
			$p_5$&$+1$&$p_6$&2\\
			$p_6$&$-1$&$p_3$&2\\\hline
		\end{tabular}
	\end{center}
	\caption{Functions $\rho $ and $\sigma $ for the intersection points between the curves of Fig.~\ref{lobex1}. There are two equivalency classes for this example.}
	\label{tab:rhosigmaclass}
\end{table}

\begin{theorem}[Transitivity of $\sim $]
	\begin{equation}
	\forall i,j,k : \left\{ 
	\begin{array}{l}
	p_i \sim p_j \\
	p_j \sim p_k
	\end{array}
	\right. \Longrightarrow p_i \sim p_k \; .
	\end{equation} 	
	\label{thsim3}
\end{theorem}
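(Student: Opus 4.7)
The plan is to prove transitivity by direct concatenation of the witnessing sequences. By hypothesis, $p_i \sim p_j$ provides an integer $K$ and a sequence $(p_{\alpha_1}, \ldots, p_{\alpha_K})$ with $p_{\alpha_1} = p_i$, $p_{\alpha_K} = p_j$, and $\prod_{m=1}^{K-1} \gamma(p_{\alpha_m}, p_{\alpha_{m+1}}) = 1$. Likewise, $p_j \sim p_k$ gives $(p_{\beta_1}, \ldots, p_{\beta_L})$ with $p_{\beta_1} = p_j$, $p_{\beta_L} = p_k$, and $\prod_{m=1}^{L-1} \gamma(p_{\beta_m}, p_{\beta_{m+1}}) = 1$.

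Next I would define the concatenated sequence of length $K + L - 1$ by
\begin{equation}
p_{\delta_m} = \begin{cases} p_{\alpha_m} & 1 \le m \le K, \\ p_{\beta_{m - K + 1}} & K \le m \le K + L - 1, \end{cases}
\end{equation}
which is well defined at the overlap index $m = K$ because $p_{\alpha_K} = p_j = p_{\beta_1}$. Then $p_{\delta_1} = p_i$ and $p_{\delta_{K+L-1}} = p_k$, and
\begin{equation}
\prod_{m=1}^{K+L-2} \gamma(p_{\delta_m}, p_{\delta_{m+1}}) = \left( \prod_{m=1}^{K-1} \gamma(p_{\alpha_m}, p_{\alpha_{m+1}}) \right) \left( \prod_{m=1}^{L-1} \gamma(p_{\beta_m}, p_{\beta_{m+1}}) \right) = 1 \cdot 1 = 1,
\end{equation}
which verifies all three conditions of Eq.~(\ref{eqequrel}), so $p_i \sim p_k$.

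There is essentially no obstacle here; the only subtlety is that the $\gamma$ factors only take values in $\{0, 1\}$ (since the adjacency functions do and $\rho \in \{-1, +1\}$), so a product equal to $1$ forces every factor to equal $1$. This is why the two separate products of $1$'s combine to give a product of $1$'s along the concatenated path rather than, say, cancelling. Once this observation is made, the argument is purely bookkeeping on finite sequences and requires no appeal to the deeper structure of $\sigma$ used in the symmetry proof.
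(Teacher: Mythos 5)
Your proof is correct and takes exactly the approach the paper uses: the paper's proof simply states that the two witnessing paths can be combined into one from $p_i$ to $p_k$, and your argument is the same concatenation spelled out with the explicit product computation. The extra detail you provide (splitting the product over the concatenated sequence into the two sub-products, each equal to $1$) is just a fuller write-up of the paper's two-sentence proof, not a different route.
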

\begin{proof}
	The hypothesis gives us one path from $p_i $ to $p_j$ and one path from $p_j $ to $p_k$. We can combine these two paths to go from $p_i$ to $p_k$ and $p_i \sim p_k$. $\Box $
\end{proof}

As a result, $\sim $ is an equivalence relation for the set $P$ of intersection points. 

\subsection{Lobe Area}
Thus, the lobe area can be computed by using the contour integral form Eqn.~\eqref{intpoly} for the intersection points that are equivalent under $\sim$. Let us define
\begin{equation}
S(p) = \left\{ \begin{array}{ll} 
\int _{C_1^+[p,\sigma (p)]} y dx - x dy &\text{if} \; \rho (p) = +1 \; , \\
\int _{C_2^-[p,\sigma (p)]} y dx - x dy &\text{if} \; \rho (p) =  -1\; ,
\end{array}
\right.
\label{eqiparea1}
\end{equation}
where $\sigma (p)$ has been defined in the proof of Theorem~\ref{thsim2}. Then, the lobe area can be computed using the following 

\begin{conjecture}[Lobe area]
	\begin{equation}
	[ A_1 \backslash A_2 ] = \sum _{P_i \in P\backslash \sim} \left( \sum _{p \in P_i} S(p) \right) \; .
	\label{eqiparea2}
	\end{equation}
	\label{thareaip}
\end{conjecture}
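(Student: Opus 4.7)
The plan is to interpret each equivalence class of $\sim$ as the vertex set of a closed piecewise curve bounding one component of $A_1\setminus A_2$, and then apply the contour-integral area formula \eqref{eq1darea} lobe by lobe.

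First I would identify each equivalence class with a cyclic orbit of the bijection $\sigma$. From the proof of Theorem~\ref{thsim2}, $\sigma$ is well-defined and invertible on the finite set $P$, so iteration starting at any $p$ produces a finite cyclic orbit $p,\sigma(p),\ldots,\sigma^{K-1}(p),\sigma^{K}(p)=p$, and the equivalence class $[p]_\sim$ is exactly this orbit. Concatenating the arcs implicit in the definition of $\gamma$---namely $C_1^+[q,\sigma(q)]$ when $\rho(q)=+1$ and $C_2^-[q,\sigma(q)]$ when $\rho(q)=-1$---then produces a closed piecewise-smooth curve $\Gamma_{[p]}$ with vertex set $[p]_\sim$. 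Transversality of the intersections, together with the adjacency conditions built into $\gamma$, prevents any two of these arcs from meeting outside the prescribed vertices, so $\Gamma_{[p]}$ is a simple Jordan curve.

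Next I would show that $\Gamma_{[p]}$ bounds a lobe of $A_1\setminus A_2$. Since $C_1$ is oriented counter-clockwise around $A_1$, any $C_1^+$ arc has $A_1$ on its left; since $C_2$ is oriented counter-clockwise around $A_2$, any $C_2^-$ arc has the complement of $A_2$ on its left. Traversing $\Gamma_{[p]}$ in the $\sigma$-direction therefore keeps a subset of $A_1\setminus A_2$ consistently on the left at every arc, so $\Gamma_{[p]}$ is a positively oriented boundary of a connected lobe $L_{[p]}\subset A_1\setminus A_2$. A combinatorial accounting of the arcs of $\partial A_1\cup \partial A_2$ separated by the points of $P$ then shows that the family $\{L_{[p]}\}_{[p]\in P/\sim}$ covers $A_1\setminus A_2$ with pairwise disjoint interiors.

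Finally, applying \eqref{eq1darea} to each $L_{[p]}$ and summing over equivalence classes gives
\begin{equation*}
[A_1\setminus A_2] \;=\; \sum_{[p]\in P/\sim}[L_{[p]}] \;=\; \tfrac{1}{2}\sum_{[p]\in P/\sim}\oint_{\Gamma_{[p]}}(y\,dx-x\,dy) \;=\; \tfrac{1}{2}\sum_{P_i\in P/\sim}\sum_{p\in P_i}S(p),
\end{equation*}
which yields Conjecture~\ref{thareaip} (up to an apparent overall factor of $\tfrac12$ relative to the stated form, consistent with the factor present in \eqref{eq1darea}). The main obstacle is the third paragraph: proving that $\Gamma_{[p]}$ really bounds a component of $A_1\setminus A_2$ and that the lobes tile the set without overlap or omission. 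This requires a careful local analysis at each intersection point, using the sign $\rho$ to track the switch between $C_1$ and $C_2$, together with a global Jordan-curve argument to rule out equivalence classes whose curves bound spurious regions inside $A_2$ or outside $A_1$.
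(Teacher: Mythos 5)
First, note that the paper itself contains no proof of this statement: it is deliberately labelled a \emph{Conjecture}~\ref{thareaip}, and the only support offered is the worked example of Fig.~\ref{lobex1} and Tables~\ref{tab:ac1plus}--\ref{tab:rhosigmaclass}. So there is no ``paper proof'' for your argument to coincide with; the question is whether your outline actually closes the gap the authors left open. It does not. Your first two steps are sound and essentially restate what is already established in the paper: by the proof of Theorem~\ref{thsim2}, $\sigma$ is a bijection on the finite set $P$, so each equivalence class is a $\sigma$-cycle, and concatenating the arcs $C_1^+[q,\sigma(q)]$ (when $\rho(q)=+1$) and $C_2^-[q,\sigma(q)]$ (when $\rho(q)=-1$) along a cycle yields a closed curve whose arcs, by adjacency and simplicity of $C_1$, $C_2$, can only meet at the cycle's vertices. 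But the third paragraph is the entire mathematical content of the conjecture, and you explicitly defer it: that each $\Gamma_{[p]}$ is the \emph{positively oriented} boundary of a connected component of $A_1\setminus A_2$ (rather than, say, a region inside $A_1\cap A_2$, a component of $A_2\setminus A_1$, or a region traversed with the wrong orientation), and that these components tile $A_1\setminus A_2$ with nothing omitted and nothing counted twice. Your ``keeps $A_1\setminus A_2$ on the left'' argument is incomplete precisely where it matters: a $C_1^+$ arc selected by the algorithm has $A_1$ locally on one side, but whether that arc (and hence its left neighborhood) lies \emph{outside} $A_2$ depends on the interplay between the sign $\rho$ at its initial point and which side of $C_2$ the outgoing $C_1$ branch enters; this is exactly the local analysis you acknowledge as ``the main obstacle'' and do not carry out. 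A proof must establish this local compatibility at every intersection point and then a global argument that the resulting Jordan domains are pairwise disjoint and exhaust $A_1\setminus A_2$.

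Two further mismatches would remain even if that step were supplied. First, your assumed orientation convention (both $C_1$ and $C_2$ counter-clockwise around their interiors) differs from the paper's setting, where one curve is counter-clockwise and the other clockwise (Fig.~\ref{lobex1} and the remark below Eq.~\eqref{eqrho}); since $\rho$ is computed from the curves' actual tangent directions, the pairing ``$\rho=+1\Rightarrow$ use $C_1^+$, $\rho=-1\Rightarrow$ use $C_2^-$'' may select different arcs under different conventions, so your local analysis must be tied to the paper's convention or shown to be convention-independent. Second, your final chain of equalities ends with $\tfrac12\sum_{P_i}\sum_{p\in P_i}S(p)$, not $\sum_{P_i}\sum_{p\in P_i}S(p)$: since $S(p)$ in Eq.~\eqref{eqiparea1} carries no factor $\tfrac12$ while Eq.~\eqref{eq1darea} does (and the paper's sign convention $\oint_{C_i} y\,dx-x\,dy\ge 0$ adds a further sign subtlety), you have not proved the conjectured identity as stated but a variant of it; flagging the discrepancy as ``apparent'' does not resolve whether the conjecture, your formula, or neither is the correct normalization.
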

where we define the classes of equivalency $\left\{ P_i \right\} = P \backslash\sim$ as corresponding to subsets of $P$ containing all the elements that are equivalent under $\sim $ (i.e., the quotient of $P$ by $\sim $). We define the number of sets in $P \backslash \sim$ as $n_P$ and we note that the equivalency classes constitute a partition of $P$. Thus, the $P_i$ are disjoint and the union of their elements is $P$. For the example intersecting curves in Fig.~\ref{lobex1}, there are $n_P=2$ equivalency classes, $P_1=\{p_1,p_2\}$ and $P_2=\{p_3,p_4,p_5,p_6\}$.

It is to be noted that using $A^-_{C_1}$ and $A^+_{C_2}$ instead of $A^+_{C_1}$ and $A^-_{C_2}$ in the definition of $\gamma $ (Eq.~\ref{eqgammadef}) gives another equivalence relationship. The equivalence classes of the latter gives the area $[A_2\backslash A_1]$ in a form identical to Proposition~\ref{thareaip}. We also note that Proposition~\ref{thareaip} could be written with a single sum over all the intersection points. However, we prefer to keep the sum of the equivalency classes because the sum over each intersection point in the same class correspond to the area of a lobe, that is, an individual protuberance of $A_1$ outside $A_2$. 

\subsection{Numerical Implementation}

\subsubsection{Intersection Points}

The simplest approach of computing intersection points between two curves that are piecewise linear requires browsing every pair of linear segments (one on $C_1$ and one on $C_2$), and determining the possible intersection point between these two segments. This can become a computationally expensive operation for a manifold with million points since this approach requires solving a 2 $\times $ 2 linear system at each of the pair of piecewise linear segments, and hence requiring $O(N^2)$ time, where $N$ is number of line segments. Furthermore, the Bentley–Ottmann algorithm uses a line to sweep across $N$ line segments with $K$ intersection points in $O(N+K) \log N$ time, and randomized algorithms can solve the problem in $O(N \log N + K)$ time. However, the application of these computational geometry algorithms is much more complicated, and in case of finite precision arithmetic leads to other challenges~\cite{Shamos1976,orourke1998computational}. Instead, we propose necessary and sufficient conditions to quickly check the existence of an intersection point between two piecewise linear segments. 

\begin{theorem}
	If the segments $[(x_1,y_1),(x_2,y_2)]$ and $[(x'_1,y'_1),(x'_2,y'_2)]$ intersect, then we must have
	\begin{equation}
	\left( (y_2-y_1) x'_1 - (x_2-x_1) y'_1 - x_1 y_2 + y_1 x_2 \right) \left( (y_2-y_1) x'_2 - (x_2-x_1) y'_2 - x_1 y_2 + y_1 x_2 \right) \leq 0 \; ,
	\end{equation}
	\label{thcnip}
\end{theorem}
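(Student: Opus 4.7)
The plan is to recognize the expression on the left as (up to a sign convention) the signed perpendicular distance of a point from the line through $(x_1,y_1)$ and $(x_2,y_2)$, and then to use the fact that two segments can only meet if the endpoints of one segment lie on opposite sides of the line supporting the other. Define the affine function
\begin{equation}
L(x,y) = (y_2-y_1)\, x - (x_2-x_1)\, y - x_1 y_2 + y_1 x_2 .
\end{equation}
A direct substitution shows $L(x_1,y_1) = 0$ and $L(x_2,y_2) = 0$, so $L$ vanishes precisely on the line through the endpoints of the first segment. The inequality to be proved is just $L(x'_1,y'_1)\, L(x'_2,y'_2) \leq 0$.

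Next I would exploit the intersection hypothesis. If the two segments share a point $(x^\ast, y^\ast)$, then $(x^\ast, y^\ast)$ lies on the first segment, hence on the line it spans, and therefore $L(x^\ast, y^\ast) = 0$. Since it also lies on the second segment, there is some $t \in [0,1]$ such that
\begin{equation}
(x^\ast, y^\ast) = (1-t)\,(x'_1,y'_1) + t\,(x'_2,y'_2).
\end{equation}
Because $L$ is affine, it commutes with this convex combination:
\begin{equation}
0 = L(x^\ast,y^\ast) = (1-t)\, L(x'_1,y'_1) + t\, L(x'_2,y'_2).
\end{equation}

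Finally I would read off the sign condition. Rearranging gives $(1-t)\, L(x'_1,y'_1) = -t\, L(x'_2,y'_2)$; multiplying both sides by $L(x'_2,y'_2)$ and using $t(1-t) \ge 0$ yields
\begin{equation}
t(1-t)\,\bigl( L(x'_2,y'_2) \bigr)^2 = -(1-t)\, L(x'_1,y'_1)\, \Bigl( -\tfrac{(1-t)}{t}\,L(x'_1,y'_1) \Bigr)
\end{equation}
— but more cleanly, one simply observes that if $t \in (0,1)$ then $L(x'_1,y'_1)$ and $L(x'_2,y'_2)$ have opposite signs (or both vanish), while if $t \in \{0,1\}$ then one of $L(x'_1,y'_1)$, $L(x'_2,y'_2)$ is $0$. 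In every case the product is $\le 0$, which is the claim.

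There is no real obstacle here; the only subtlety is handling the boundary cases $t \in \{0,1\}$ and the degenerate situation where $(x_1,y_1) = (x_2,y_2)$, in which $L \equiv 0$ and the inequality holds trivially. These are absorbed by stating the conclusion as a non-strict inequality, which is why the theorem gives a necessary (but not sufficient) condition: collinear but non-overlapping configurations also satisfy it, so an additional symmetric check swapping the roles of the two segments is needed in a full intersection test.
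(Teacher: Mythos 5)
Your proposal is correct and follows essentially the same route as the paper: both define the affine function vanishing on the line through $(x_1,y_1)$ and $(x_2,y_2)$ and conclude that the second segment's endpoints must give values of non-positive product because the shared intersection point lies on that line. Your convex-combination argument simply makes rigorous the paper's informal statement that the endpoints lie ``on opposite sides'' of the line, and your treatment of the boundary and degenerate cases is a welcome but minor addition.
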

\begin{proof}
	The equation of the line passing through the two points $(x_1,y_1)$ and $(x_2,y_2)$ is
	\begin{equation}
	f(x,y)=(y_2-y_1) x - (x_2-x_1) y - x_1 y_2 + y_1 x_2 = 0 \; .
	\end{equation} The theorem states that if the segment $[(x'_1,y'_1),(x'_2,y'_2)]$ intersects the segment $[(x_1,y_1),(x_2,y_2)]$, then it must also intersect the line $f(x,y)=0$. As a result, the endpoints of the segment $[(x'_1,y'_1),(x'_2,y'_2)]$ must be on opposite sides of the line and we have
	\begin{equation}
	f(x'_1,y'_1) f(x'_2,y'_2) \leq 0 \; .
	\end{equation} $\Box $
\end{proof}

Theorem~\ref{thcnip} gives us a necessary condition for an intersection between two segments. Notice that we can reverse the role of each segment in Theorem~\ref{thcnip} and get another necessary condition. In addition, we have

\begin{theorem}
	The segments $[(x_1,y_1),(x_2,y_2)]$ and $[(x'_1,y'_1),(x'_2,y'_2)]$ intersect if and only if
	\begin{equation}
	\left( (y_2-y_1) x'_1 - (x_2-x_1) y'_1 - x_1 y_2 + y_1 x_2 \right) \left( (y_2-y_1) x'_2 - (x_2-x_1) y'_2 - x_1 y_2 + y_1 x_2 \right) \leq 0 \; ,
	\end{equation} and
	\begin{equation}
	\left( (y'_2-y'_1) x_1 - (x'_2-x'_1) y_1 - x'_1 y'_2 + y'_1 x'_2 \right) \left( (y'_2-y'_1) x_2 - (x'_2-x'_1) y_2 - x'_1 y'_2 + y'_1 x'_2 \right) \leq 0 \; .
	\end{equation}
	\label{thcnsip}
\end{theorem}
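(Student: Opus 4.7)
The plan is to prove the biconditional by handling each direction separately, with the forward implication being immediate from the previous result and the reverse implication requiring a parametric argument combined with a careful treatment of the degenerate cases.

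For the forward direction (``only if''), I would simply invoke Theorem~\ref{thcnip} twice: once in the stated orientation, which gives the first inequality, and once with the roles of the two segments exchanged, which gives the second inequality. No additional work is required.

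For the reverse direction (``if''), the plan is to introduce the parametric forms $(x,y) = (1-t)(x_1,y_1) + t (x_2,y_2)$ with $t\in[0,1]$ for the first segment and $(x,y) = (1-s)(x'_1,y'_1) + s(x'_2,y'_2)$ with $s\in[0,1]$ for the second. Let $f(x,y) = (y_2-y_1)x - (x_2-x_1)y - x_1 y_2 + y_1 x_2$ denote the affine form whose zero set is the line through the first segment's endpoints, and let $g$ denote the analogous form for the primed segment. Since $f$ is affine, $f\bigl((1-s)(x'_1,y'_1) + s(x'_2,y'_2)\bigr) = (1-s) f(x'_1,y'_1) + s f(x'_2,y'_2)$. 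The first hypothesis $f(x'_1,y'_1)\,f(x'_2,y'_2)\le 0$ (with at least one of the values nonzero) then produces a value $s^\star \in [0,1]$ where this affine expression vanishes, namely $s^\star = f(x'_1,y'_1)/\bigl(f(x'_1,y'_1)-f(x'_2,y'_2)\bigr)$. By the same reasoning applied to $g$, the second hypothesis yields some $t^\star \in [0,1]$ at which the point on segment one lies on the supporting line of segment two. Provided the two supporting lines are not parallel, they intersect in a unique point, and that point must coincide with both $(1-s^\star)(x'_1,y'_1) + s^\star(x'_2,y'_2)$ and $(1-t^\star)(x_1,y_1) + t^\star(x_2,y_2)$, giving the desired common point in both segments.

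The main obstacle, and the step that requires real care, is the degenerate configuration in which $f(x'_1,y'_1) = f(x'_2,y'_2) = 0$, i.e., the two supporting lines coincide. Here the parametric argument above breaks down because $s^\star$ is undefined. I would handle this collinear case separately: the two hypotheses reduce to the statement that the primed endpoints are not both strictly outside the interval cut out by the unprimed endpoints along the common line (and vice versa), so a one-dimensional overlap argument on the line shows the segments share at least a point. The remaining subtle subcase is when exactly one endpoint of one segment lies on the supporting line of the other (equality in one factor): this point itself is a candidate intersection, and I would verify that the second inequality forces it to also lie within the other segment, closing the proof.
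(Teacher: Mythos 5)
Your forward direction and the main parametric argument for the reverse direction are sound, and they are essentially a more explicit version of the paper's own proof, which simply observes that if the endpoints of each segment lie on opposite sides of the line containing the other segment, the segments must intersect. Your construction of $s^\star$ and $t^\star$, together with the remark that when the two supporting lines are distinct and non-parallel their unique common point is the point reached at both parameters, makes that sentence precise; note also that the parallel-but-distinct configuration is automatically excluded by the first inequality (both primed endpoints would then give the same strict sign), and your separate ``one factor vanishes'' subcase is already covered by the same $s^\star$, $t^\star$ argument, so it needs no extra treatment.

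The genuine problem is your treatment of the coincident-line (collinear) case. You claim that there the hypotheses ``reduce to the statement that the primed endpoints are not both strictly outside the interval cut out by the unprimed endpoints along the common line.'' That is false: when both segments lie on one and the same line, the affine form $f$ vanishes identically on that line, so $f(x'_1,y'_1)=f(x'_2,y'_2)=0$ no matter where the primed endpoints sit, and likewise $g$ vanishes at both unprimed endpoints. Both products are $0\le 0$ for any relative position of the segments, so the hypotheses carry no positional information and no one-dimensional overlap can be deduced from them. In fact the ``if'' direction of the theorem fails in this configuration as stated: the segments $[(0,0),(1,0)]$ and $[(2,0),(3,0)]$ satisfy both inequalities (all four factors are zero) yet do not intersect. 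The paper's proof does not address this case either---it is implicitly ruled out by the standing assumption of transverse intersections in that section---so the honest way to close your argument is to exclude the collinear configuration (or demand strict inequality in at least one factor), not to assert that it follows from the stated hypotheses.
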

\begin{proof}
	Theorem~\ref{thcnip} directly implies one direction ($\Longrightarrow $) of the equivalence. To prove $\Longleftarrow $, notice that if the two equations are satisfied, the endpoints of the first segment are on each side of the line containing the second segment. The endpoints of the second segment are also on each side of the line containing the first segment. As a result, the two segments must intersect in at least one point. $\Box $
\end{proof}

The two theorems above allow for a very fast and efficient algorithm to detect intersection points. Each segment on curve $C_1$ is checked for intersections with each segment on curve $C_2$. However, only the necessary condition given by Theorem~\ref{thcnip} is checked. Only if this condition is satisfied is the second condition in Theorem~\ref{thcnsip} checked. If the necessary and sufficient condition is satisfied, then the intersection point is effectively computed.

\subsubsection{Area bounded by a closed curve} 

We assume that the curves $C_1$ and $C_2$ are given in terms of a sequence of points $(x_i,y_i)$. We want an exact evaluation of the integral in Eq.~\eqref{eqiparea1} for piecewise linear curves. By using Eq.~(\ref{intpoly}) in the section above, Eq.~(\ref{eqiparea1}) becomes 
\begin{equation}
S(p) = \sum _{\begin{array}{c} 
	(x_i,y_i) \\ 
	\in \mathcal {C}^{\rho (p)} [p,\sigma (p)]
	\end{array} }\left( y_i x_{i+1} - x_i y_{i+1} \right) \; .
\label{eqiparea3}
\end{equation}
where $\mathcal{C} = C_1/C_2$ and in a form suitable for numerical implementation.  


\section{Non-Transverse Intersections}\label{sect:non-trans-inter}

In this section, in contrast with~\S~\ref{sect:inter-pts-lobes}, we {\em relax} the assumption that the two curves can only have transverse intersections. Thus, we allow non-transverse intersections of the curves, and the input curves can have common segments as shown in Fig.~\ref{fig:closed_curves}. This scenario arises in applying lobe dynamics on the Poincar\'e surface-of-section in 2 or more degrees of freedom, time-periodic 1-DOF system, 2 dimensional time-periodic fluid flow, and when multilobe, self-intersecting turnstiles are formed. This is shown in Fig.~\ref{fig:multi-lobe-turnstile}, and discussed in detail in Refs.~\cite{Koon2004,Dellnitz2005,Ross2012}. In these case of multilobe turnstiles, transverse intersection cannot be guaranteed.
\begin{figure}[!ht]
	\centering
	\includegraphics[width=0.95\textwidth]{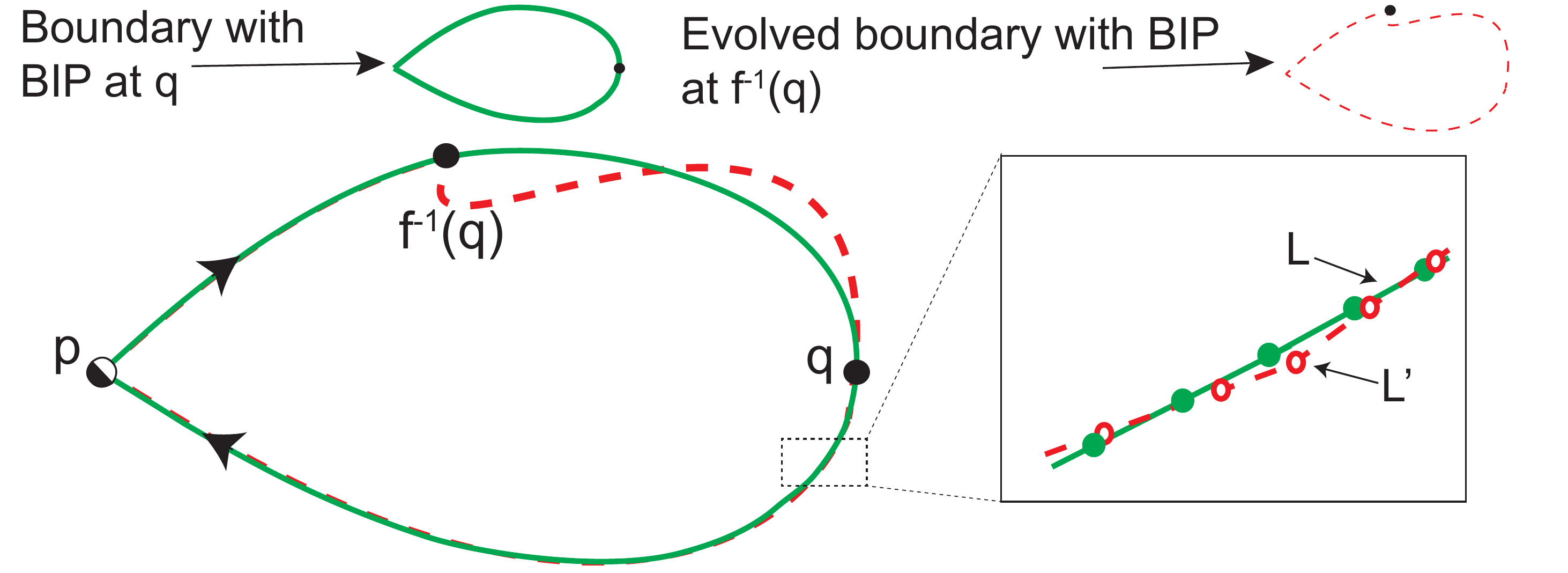}
	\caption{Schematic showing the boundary (green solid curve) and its pre-image (red dashed curve) with boundary intersection point (BIP) at $q$ and $f^{-1}(q)$, respectively. When the boundaries have near-tangent intersections, computing intersection points requires expanding/shrinking the points on the curve to avoid false lobes formed by the segments of curve $L$ and $L'$.}
	\label{fig:closed_curves}
\end{figure}
\begin{figure}[!ht]
	\centering
	\subfigure[]{\includegraphics[width=0.45\textwidth]{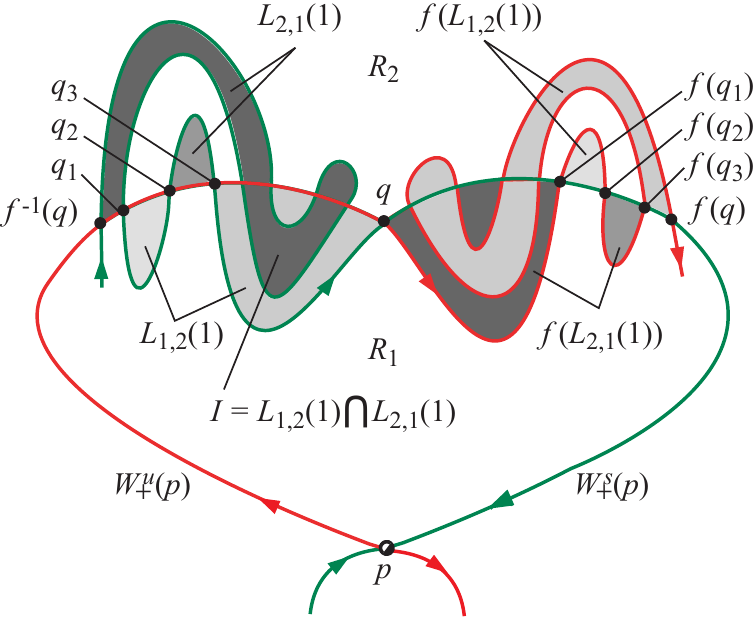}}
	\subfigure[]{\includegraphics[width=0.45\textwidth]{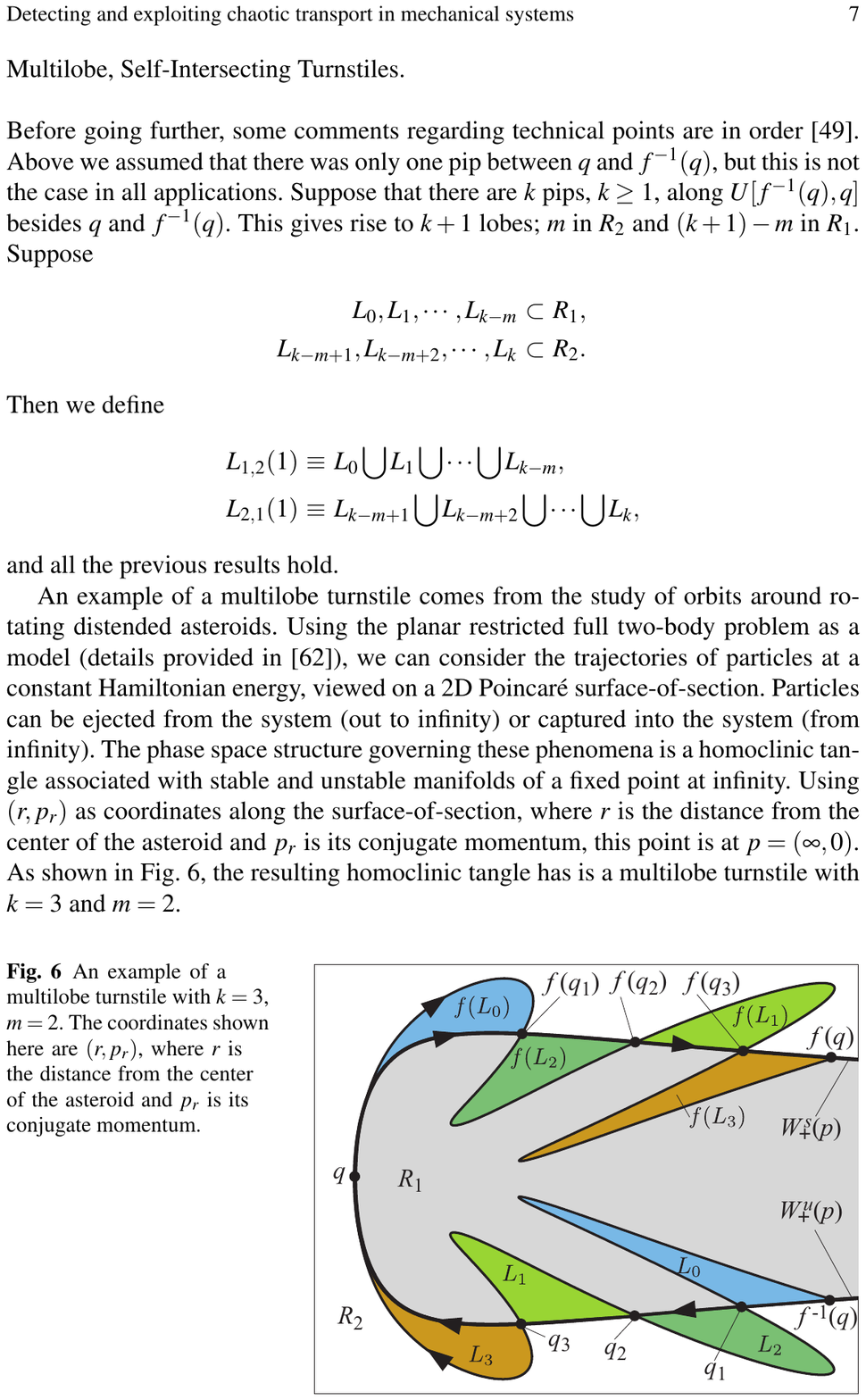}}
	\caption{(a) Schematic showing the geometry of multilobe, self-intersecting lobes. (b) An example of a multilobe turnstile that arises when considering the motion of a small body (for example, ejecta) in the field of a rotating asteroid; see Ref.~\cite{Koon2004} for details.}
	\label{fig:multi-lobe-turnstile}
\end{figure}
In the case of transverse intersections, we used an algorithm based on primary intersection points and secondary intersection points to identify the boundary of each lobe, which was then used to compute the area enclosed by the boundary.  The near-tangency (when the determinant of the linear system resulting from the pair of linear segments is close to 0) of intersection of the two curves creates both a confusion in the ordering of the intersection points and the position of the lobes as well as an enormous computational difficulty to extract the actual position of the intersection. Thus, to get around the non-transverse intersection, we resort to computing a function that is amenable to the tangle geometry of manifolds and lobes.


\subsection{Interior Function}

Let us define the complex function 
\begin{equation}
f_{z_0}(z) = \frac{1}{x-x_0+i(y-y_0)}, \quad \text{where} \quad z = x + iy 
\end{equation} The integral of $f(z)$ over a closed curve is equal to $2i\pi $ times the number of turns that the closed curve makes around the point $x_0 + i y_0$ (to prove that, use the fact that $f(z)$ is analytic everywhere in the complex plane except at $x_0 + i y_0$ and use the residue theorem).
We define
\begin{equation}
J_i (x_0, y_0) = Im \left\{ \int _{C_i} \frac{dx+i dy}{x-x_0+i(y-y_0)} \right\} ,
\label{eqJi}
\end{equation} and
\begin{equation}
I_i (x_0, y_0) =\left\{
\begin{array}{cl}
1 & \text{if } J_i (x_0, y_0) < \pi\\
-1 & \text{if } J_i (x_0, y_0) \ge \pi
\end{array}
\right. ,
\end{equation}
We note that $I_i (x_0, y_0) $ is negative when the point $(x_0, y_0)$ is contained in the simple closed curve $A_i$, and is positive otherwise.

\subsection{Lobe Area}

In order to extract $[A_1 \backslash A_2]$ and $[A_2 \backslash A_1]$ from the shape of the curves, we define the following quantities
\begin{equation}
Q_1 = \int _{C_1} I_2(x,y) \left( y dx - xdy \right) + \int _{C_2} I_1(x,y) \left( y dx - xdy \right),
\label{eqq1}
\end{equation}

\begin{equation}
Q_2 = \int _{C_1} \frac{I_2(x,y)+1}{2} \left( y dx - xdy \right) + \int _{C_2} \frac{I_1(x,y)+1}{2} \left( y dx - xdy \right),
\label{eqq2}
\end{equation}

\begin{equation}
Q_3 = \int _{C_1} \frac{I_2(x,y)-1}{-2} \left( y dx - xdy \right) + \int _{C_2} \frac{I_1(x,y)-1}{-2} \left( y dx - xdy \right).
\label{eqq3}
\end{equation} 
A quick look at the different paths on Fig.~\ref{figpathqs} reveals that
\begin{figure}
	\centering\includegraphics[width=5in]{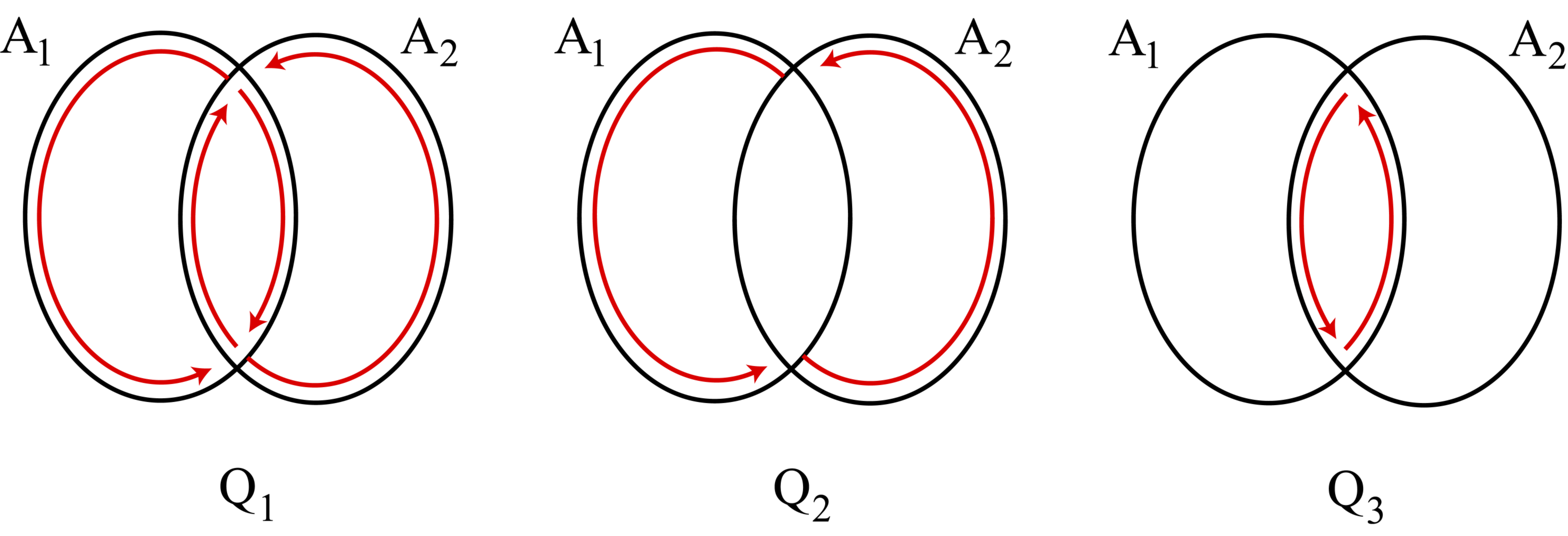}
	\caption{Schematic view of the paths involved in the definition of $Q_1$, $Q_2$ and $Q_3$.}
	\label{figpathqs}
\end{figure}
\begin{equation}
Q_1=[A_1\cup A_2] - [A_1\cap A_2] = [A_1\backslash A_2] +[A_2\backslash A_1]
\end{equation}
\begin{equation}
Q_2=[A_1\cup A_2] ,
\end{equation}
\begin{equation}
Q_3=[A_1\cap A_2] .
\end{equation}
Notice that the three equation above are not linearly independent and {\em any} of them can give us the expected results. However, we compute $Q_1$, $Q_2$ and $Q_3$ and use the redundancy to minimize the error on the integrals and provide an approximation of the computational error. Since
\begin{equation}
[A_1 \backslash A_2] = [A_1 \cup A_2] - [A_2] = [A_1] - [A_1\cap A_2] ,
\end{equation} and
\begin{equation}
[A_2 \backslash A_1] = [A_1 \cup A_2] - [A_1] = [A_2] - [A_1\cap A_2], 
\end{equation} we have
\begin{equation}
[A_1 \backslash A_2] = \frac{1}{2} (Q_2-Q_3) + \frac{1}{2} ([A_1]-[A_2]),
\end{equation} and
\begin{equation}
[A_2 \backslash A_1] = \frac{1}{2} (Q_2-Q_3) + \frac{1}{2} ([A_2]-[A_1]).
\end{equation}
And using
\begin{equation}
[A_1]+[A_2]=[A_1 \cup A_2]+[A_1 \cap A_2],
\end{equation} we find
\begin{equation}
[A_1 \backslash A_2] = \frac{1}{2} Q_1 + \frac{1}{2} ([A_1]-[A_2]),
\end{equation} and
\begin{equation}
[A_2 \backslash A_1] = \frac{1}{2} Q_2 + \frac{1}{2} ([A_2]-[A_1]).
\end{equation}
Our algorithm combines these two results and provides the following final answer
\begin{equation}
[A_1 \backslash A_2]  =  \frac{1}{2} (A_1 - A_2) + \frac{1}{4} (Q_1+Q_2-Q_3) ,\end{equation}
\begin{equation}
[A_2 \backslash A_1]  =  \frac{1}{2} (A_2 - A_1) + \frac{1}{4} (Q_1+Q_2-Q_3) ,
\end{equation}
\begin{equation}
\delta [A_1 \backslash A_2]  =  \delta [A_2 \backslash A_1 ] = \frac{1}{4} \left| Q_2-Q_3-Q_1 \right| \; ,
\end{equation}
where the last equation gives the approximate error on the computed area.

\subsection{Numerical Methods}

The only technical difficulty in this algorithm is the numerical computation of the functions $J_i(x_0,y_0)$ in Eq.~\eqref{eqJi}, and we present an algorithm for piecewise linear boundaries in \S~\ref{jicomp}. The computation of the integrals in Eq.~\eqref{eqq1}),~\eqref{eqq2}, and~\eqref{eqq3} is similar to the numerical method used for the integral in \S~\ref{contourint}. In addition, we present an algorithm to increase the number of points on the input curve close to the intersection points; we refer to the implementation of this algorithm in \emph{Lober} as \emph{densifier}, and its use is presented in \S~\ref{subsubsect:densifier}. The algorithm in \S~\ref{subsubsect:densifier} has proved to increase the accuracy of this method without compromising the computational cost too much.

\subsubsection{Computation of $J_i(x,y)$ }\label{jicomp}
In this section, we derive the exact value of the function $J_i$ defined in Eq.~(\ref{eqJi}) when the boundary of $C_i$ is given as a sequence of piecewise linear segments. For each linear segment $[(x_1,y_1),(x_2,y_2)]$, we have
\begin{equation}
\left\{ \begin{array}{lcl}
x=x_1 + t (x_2 - x_1) \; ,\\
y=y_1 + t (y_2 - y_1) \; .
\end{array} \right.
\end{equation} 
Using $dx = (x_2 - x_1)  \; dt$ and $dy = (y_2 - y_1) \; dt$, we obtain
\begin{align}
Im \left\{ \frac{dx + i dy}{x-x_0 + i (y-y_0)} \right\} & = \frac{(x_1-x_0)(y_2-y_1) \; dt - (y_1 - y_0)(x_2-x_1) \; dt}{\left(x_1-x_0+t(x_2-x_1)\right)^2+\left(y_1-y_0+t(y_2-y_1)\right)^2}\; , \\
& = \bar{\mathbf{1}}_z \frac{(\bar{x}_1-\bar{x}_0)\times (\bar{x}_2-\bar{x}_1) \; dt}{\left\| \bar{x}_2 - \bar{x}_1 \right\| ^2 t^2+ 2 (\bar{x}_2 - \bar{x}_1) \cdot (\bar{x_1} - \bar{x}_0) t+ \left\| \bar{x}_1 - \bar{x}_0 \right\| ^2} \; ,
\end{align}
where $\bar{\mathbf{1}}_z$ is the unit normal vector in the z-direction and $\bar{x}_0 = (x_0, y_0)$, $\bar{x}_1 = (x_1, y_1)$, $\bar{x}_2 = (x_2, y_2)$.
\begin{align}
Im \left\{ \frac{dx + i dy}{x-x_0 + i (y-y_0)} \right\} & = \frac{\left\| \bar{u} \right \| ^ 2 \left\| \bar{v} \right\| ^2 \sin (\bar{u},\bar{v}) \; dt}{\left\| \bar{v} \right\| ^2 t^2 + 2 \left\| \bar{u} \right\| \left\| \bar{v} \right\| \cos (\bar{u},\bar{v}) \; t + \left\| \bar{u} \right\| ^2} \; ,
\label{eqJip1}
\end{align} 
where
\begin{equation}
\left\{ \begin{array}{lcl}
\bar{u} & = & \bar{x}_1 - \bar{x}_0 \; , \\
\bar{v} & = & \bar{x}_2 - \bar{x}_1 \; .
\end{array} \right.
\end{equation}
We note that the discriminant of the denominator of Eq.~\eqref{eqJip1} is
\begin{equation}
\rho = - 4 \left\| \bar{u} \right \|^2 \left\| \bar{v} \right\| ^2 \sin ^2 (\bar{u},\bar{v}) \leq 0 \; .
\end{equation}
As a result, we have
\begin{eqnarray}
Im \left\{ \int _{[\bar{x}_1,\bar{x}_2]} \frac{dx + i dy }{x-x_0 + i (y-y_0)} \right\} & = & \int _0 ^ 1 \frac{ \left\| \bar{u} \right\| ^ 2 \left\| \bar{v} \right\| ^ 2 \sin (\bar{u} , \bar{v}) \;  dt}{\left\| \bar{v} \right\| ^2 t^2 + \left\| \bar{u} \right\| \left\| \bar{v} \right\| \cos (\bar{u}, \bar{v}) + \left\| \bar{u} \right\| ^2 }\\
& = & \frac{ \sin ( \bar{u} , \bar{v} )}{\left| \sin ( \bar{u} , \bar{v}) \right| } \left[ \tan ^{-1}  \left( \frac{\left\| v \right\| ^ 2 t + \bar{u} \cdot \bar{v} }{ \left\| \bar{u} \right\| \left\| \bar{v} \right\| \left| \sin (\bar{u} , \bar{v} ) \right|} \right) \right] _0^1 \\
& = & \tan ^{-1} \left( \frac{\left\| \bar{v} \right\| ^2 + \bar{u}\cdot \bar{v}}{\bar{\mathbf{1}}_z \cdot (\bar{u} \times \bar{v})} \right) -\tan ^{-1}  \left(  \frac{\bar{u}\cdot \bar{v}}{\bar{\mathbf{1}}_z \cdot (\bar{u} \times \bar{v})} \right)
\end{eqnarray}
We note that for $\sin (\bar{u},\bar{v}) = 0$, the increment to the integral is zero. This is consistent with the equation above where the right-hand term is continuous for $\bar{u}\times \bar{v} = 0$ and vanishes. 
This approach is similar to the point-in-polygon problem that is tackled in computational geometry (see Ref.~\cite{orourke1998computational}) by using the ray sweep method to check if the point is inside or outside a polygon. 
winding number approach. Although, our method is mathematically satisfying, it may show poor performance when compared to the more efficient method of ray passing. This is due to the integral and trigonometric function evaluations that are involved, but our approach is to adopt a method amenable to manifolds as input curves at the expense of computational performance. 

%

\subsubsection{Curve Densifier}\label{subsubsect:densifier}

There is a built-in \emph{densifier} module in the light version of the package \emph{Lober}, and which adds points on the curves close to the intersection points. The densifier can be activated by adding the parameters \verb+-DENS <nPass> <nDens>+ at the command line. The arguments \verb+<nPass>+ and \verb+<nDens>+ give the number of passes to be performed and the number of points to add near each intersection at each pass, respectively.

The extra precision is always $i_r = {n_{dens}}^{n_{pass}}$, where $n_{dens}$ = \verb+<nDens>+ and $n_{pass}$ = \verb+<nPass>+. In other words, the precision is the same with $(n_{pass}=1 , n_{dens}=1000)$ than with $(n_{pass}=3, n_{dens}=10)$. For a constant $i_r$, the value of the two parameters $n_{dens}$ and $n_{pass}$ should minimize the computational time. Small $n_{pass}$ means that fewer steps are necessary to densify the curve and can reduce the computational time. However, small $n_{pass}$ usually implies large $n_{dens}$ to maintain a constant $i_r$. Since the extra length of the curve is $n_{dens} n_{pass}$, the number of points increases rapidly if $n_{pass}$ is too small and lengthens the computation. So there is an optimal $n_{pass}$ that minimizes computation time, and this optimal $n_{pass}$ parameter depends on the local curvature of the manifold under consideration.


We present a comparison of the \emph{densifier} module in Fig.~\ref{fig:dens_compare} by using this to compute the lobe area for two application problems. As demonstrated by the results, the accuracy in the lobe area does not improve dramatically beyond addition of $10^3$ points, and is comparable to the default densifier in the light version of \emph{Lober}.
\begin{figure}[!ht]
	\centering
	\subfigure[OVP flow]{\includegraphics[scale=0.25]{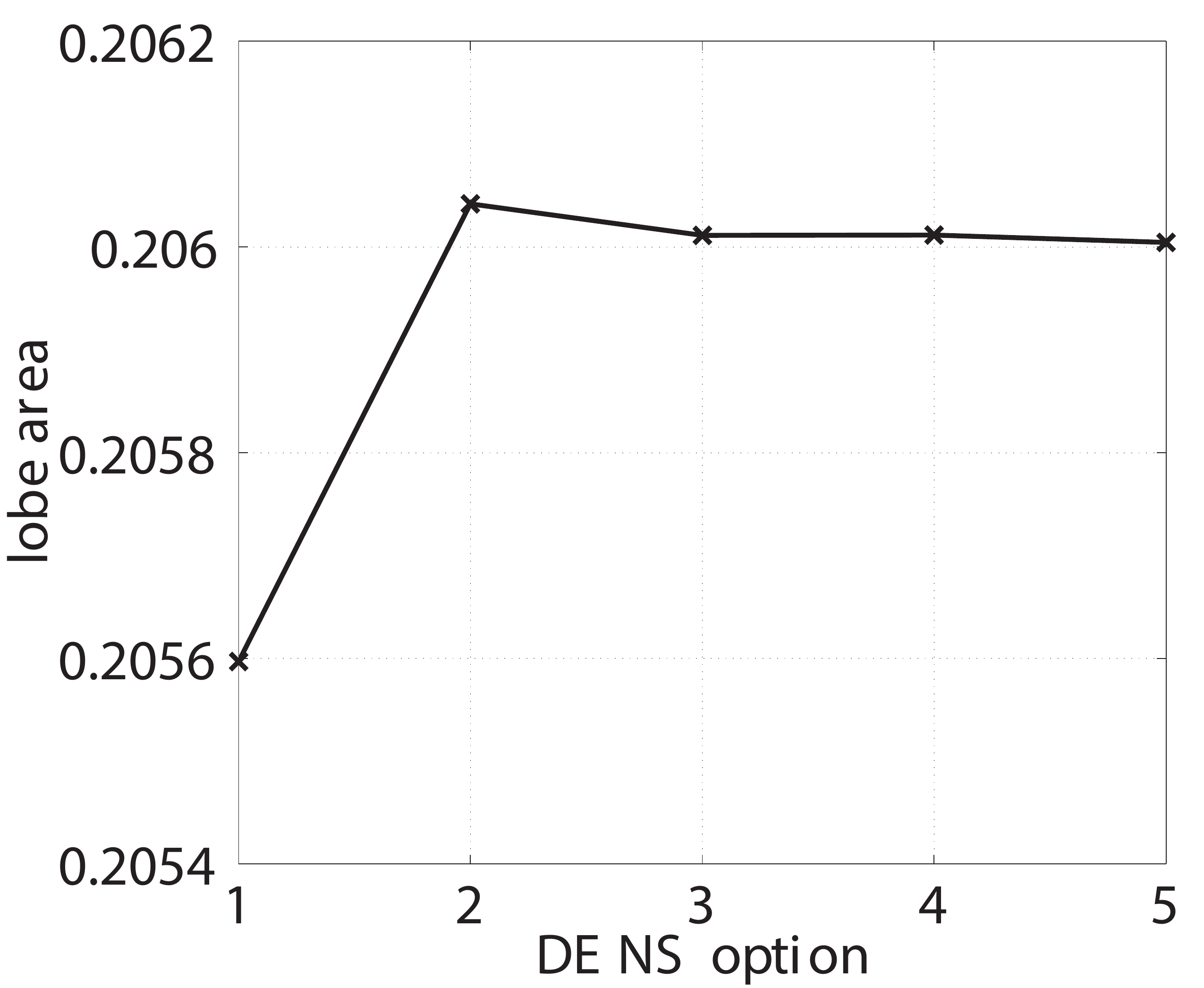}}
	\subfigure[PCR3BP]{\includegraphics[scale=0.27]{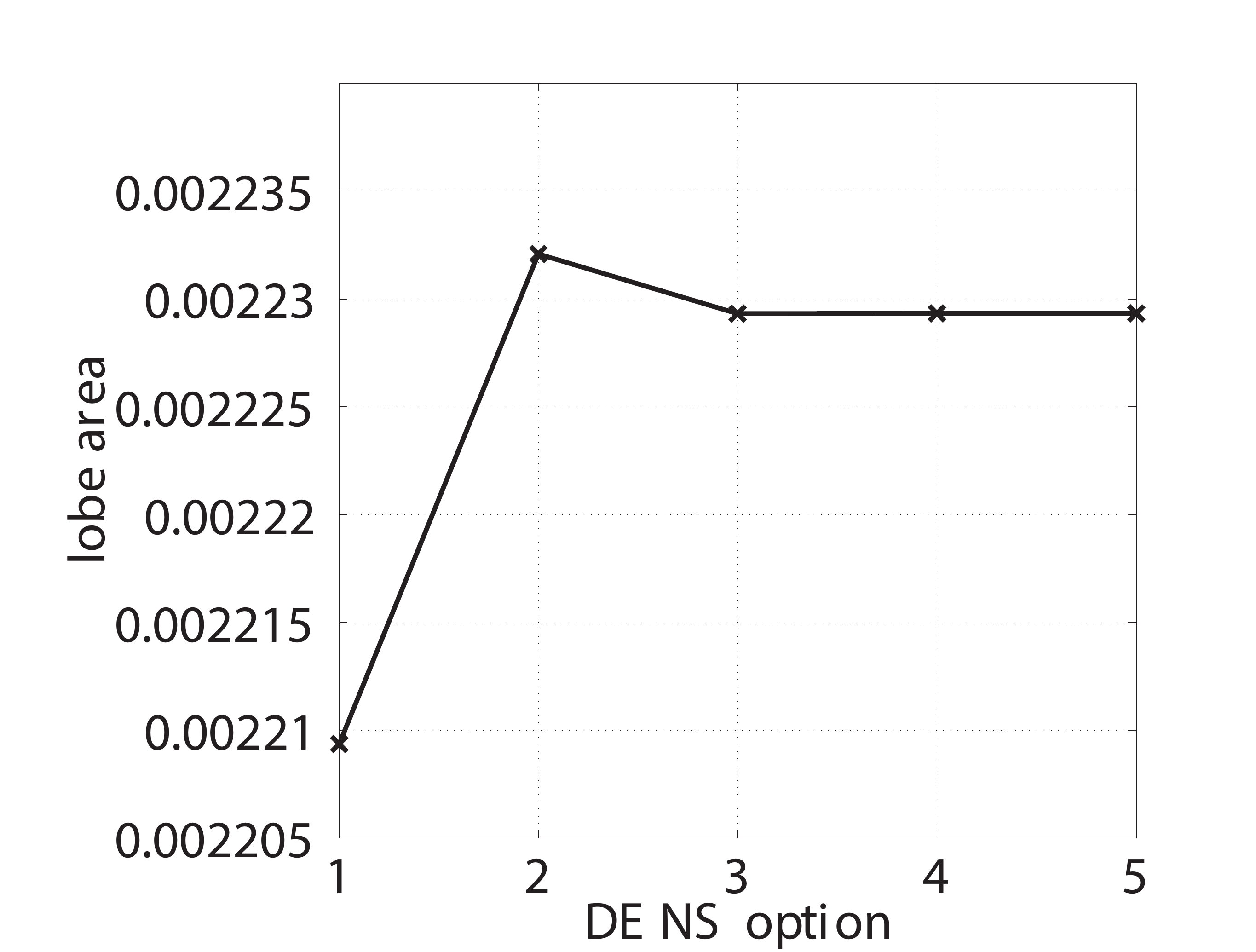}}
	\caption{Shows the numerically computed lobe area that used \emph{densifier} option. (a) oscillating vortex pair (OVP) flow as discussed in Ref.~\cite{Rom-kedar1990}, and (b) planar circular restricted three body problem (PCR3BP) as discussed in Ref.~\cite{Koon2011a}. Along the x-axis, the indices denote the following \emph{densifier} options: 1. \texttt{-DENS 0}, 2. \texttt{-DENS 1 10}, 3. \texttt{-DENS 3 10}, 4. \texttt{-DENS 5 100}, 5. without -DENS option which triggers the default densifier. The y-axis is the average of the entraining and detraining lobes for the parameters used in the references.} 
	\label{fig:dens_compare}
\end{figure}

\section{Application to lobe dynamics and rate of escape}\label{sect:applications} 
In this section, we present applications of the computational approach to problems of chaotic transport in phase space that uses lobe dynamics in fluid flow and rate of escape from a potential well in a ship capsize model. In essence, using the numerical methods presented in \S~\ref{sect:inter-pts-lobes} and \S~\ref{sect:non-trans-inter}, we demonstrate the qualitative and quantitative results that is relevant for these geometric methods of phase space transport.

%




\subsection{Chaotic transport in fluid flow}
In this section, we apply the numerical methods to quantify phase space transport in the oscillating vortex pair (OVP) flow that was introduced in \citeauthor{Rom-kedar1990} \cite{Rom-kedar1990}. In this example from fluid dynamics, it is of interest to calculate the transport rate, in terms of the iterates of a two dimensional map, of species $S_1$ from the inner core $R_1$ to the outside region $R_2$ shown in  Fig.~\ref{fig:ovp_unperturbed}(b). More generally and referring to the Fig.~\ref{fig:regions}(b), this transport in phase space can be discussed using the language of lobe dynamics and transport in two dimensional maps developed in Refs.~\cite{Rom-kedar1990,Wiggins1992chaotic}. We will briefly summarize the OVP flow and the transport quantities that are relevant for the numerical experiments in our study.  

In two-dimensional, incompressible, inviscid fluid flow, the stream-function is analogous to the Hamiltonian governing particle motion and the domain of the fluid flow is identified as the phase space. The OVP flow is generated by two counterclockwise rotating vortices under sinusoidal perturbation which can generate chaotic particle trajectories that causes regions of phase space to move across transport barriers. That is, time dependent velocity field, that is referred to as unsteady flow in fluid dynamics, can generate particle motion between the region with closed streamlines (shown as magenta in Fig.~\ref{fig:ovp_unperturbed}(a)) and the region outside the heteroclinic connection (shown as black in Fig.~\ref{fig:ovp_unperturbed}(a)). When the flow is time-dependent and has periodic forcing, a suitable approach is to construct a two-dimensional map of the flow by defining Poincar\'e section at a certain phase of the flow and given by
\begin{equation}
\begin{aligned}
f :& U^0 \rightarrow U^0  \\
\text{where,} \qquad U^0 &= \{(x,y,\theta) \in \mathbb{R}^2 \times \mathbb{S}^1 : \theta = 0 \}
\end{aligned}
\label{eqn:flow_map2D}
\end{equation}
Thus, when the fluid flow is time-dependent and has a sinusoidal perturbation, the hyperbolic fixed points of the two dimensional map and the associated invariant manifolds can be used to define a boundary parametrized by an intersection point, which acts as a \emph{partial barrier} between regions of the phase space. We note here that partial barriers are the analogous to \emph{separatrix} which are complete barriers of transport in the unperturbed fluid flow. When perturbation is applied to fluid flow, the partial barrier, however, acts as a boundary, shown as black line connecting $p_-, q, p_+$ in Fig.~\ref{fig:ovp_unperturbed}(b), across which transport occurs via specific regions of phase space enclosed by the invariant manifolds, lobes. 
\begin{figure}[!ht]
	\centering             
	\subfigure[]{\includegraphics[width=0.375\linewidth]{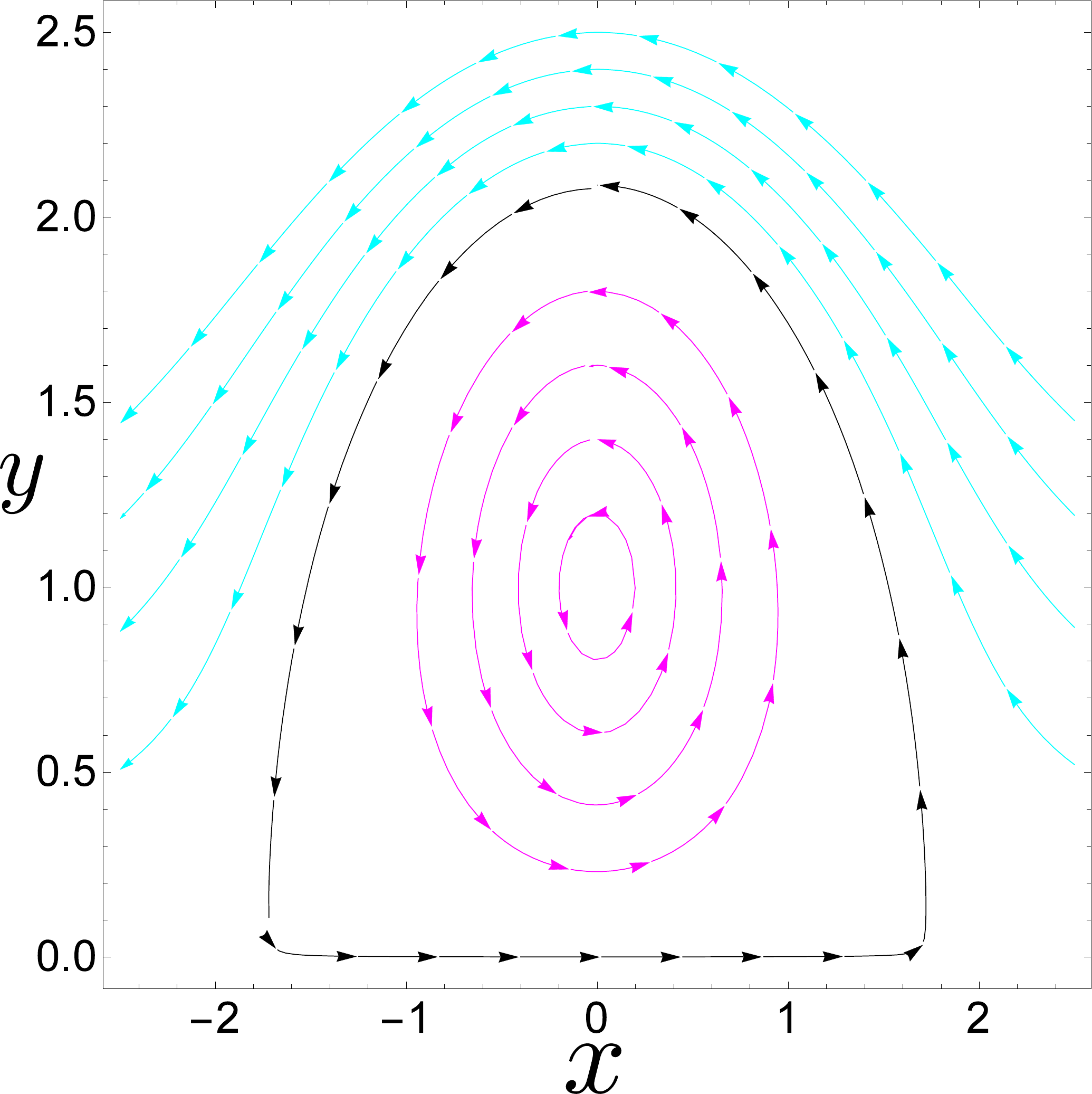}} 
	\subfigure[]{\includegraphics[width=0.475\linewidth]{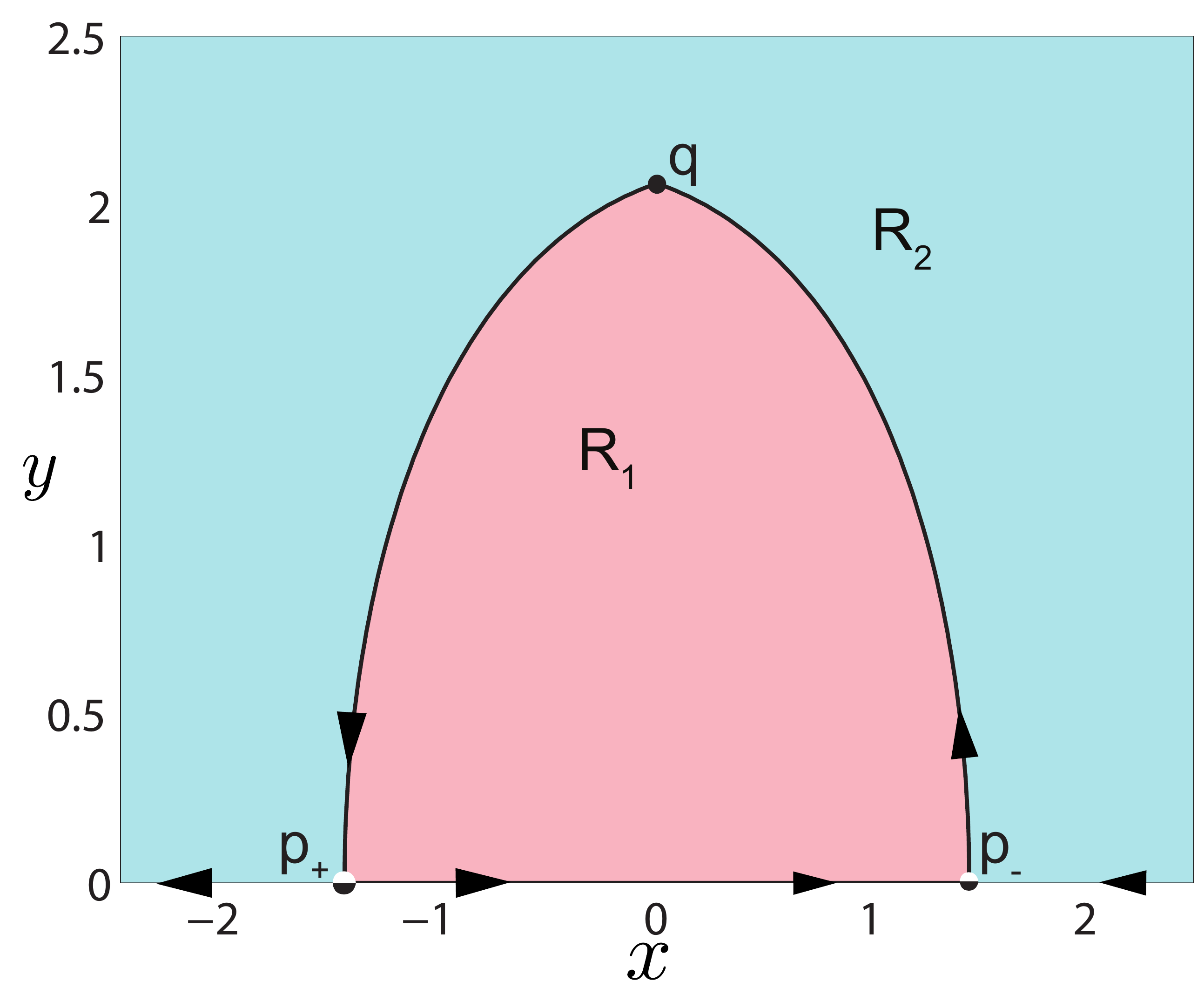}} 
	\caption{(a) Shows the closed streamlines and the free flow region for the unperturbed vortex pair flow. Even a simple sinusoidal perturbation changes the picture dramatically as transport between the regions $R_1$ and $R_2$ becomes feasible due to chaotic particle trajectories. (b) Shows the regions of interest $R_1$ and $R_2$ for studying transport in the OVP flow. The transport between the regions $R_1$ and $R_2$ can be explained and quantified in terms of the turnstile mechanism.}
	\label{fig:ovp_unperturbed}
\end{figure}







We can cast the equation of motion of a passive particle in the OVP flow in the form of Hamilton's equations using the stream function as the Hamiltonian function and performing perturbation expansion of the velocity field (see Ref.~\cite{Rom-kedar1990} for details). Thus, the system can be expressed as
\begin{equation}
\left.
\begin{aligned}
\d{x}{t} & = f_1(x,y)+ \epsilon g_1(x,y,t/\gamma;\gamma) + \bigO(\epsilon^2)\\
\d{y}{t} & = f_2(x,y) - \epsilon g_2(x,y,t/\gamma;\gamma) + \bigO(\epsilon^2)\\
\end{aligned}
\right.
\label{eqn:ovp_expansion_eps}
\end{equation}
where
\begin{equation}
\left.
\begin{aligned}
f_1 &= -\frac{y-1}{I_-} + \frac{y+1}{I_+} - 0.5 \\
f_2 &= \frac{x}{I_-} - \frac{x}{I_+}
\end{aligned}
\right.
\end{equation}
and
\begin{equation}
\left.
\begin{aligned}
g_1 &= [\cos(t/\gamma)-1]\left\{\frac{1}{I_-} + \frac{1}{I_+} - \frac{2(y-1)^2}{I_-^2} - \frac{2(y+1)^2}{I_+^2}\right\} \\
&+ (x/\gamma)\sin(t/\gamma)\left\{\gamma^2 \left[\frac{y-1}{I_-^2} -\frac{y+1}{I_+^2}\right] + 1 \right\} - 0.5 \\
g_2 &= 2x[\cos(t/\gamma)-1]\left\{\frac{y-1}{I_-^2} + \frac{y+1}{I_+^2} \right\} \\
&+ (1/\gamma)\sin(t/\gamma)\left\{ \frac{\gamma^2}{2}\left[\frac{1}{I_-} - \frac{1}{I_+}\right] -x^2\gamma^2\left[\frac{1}{I_-^2} - \frac{1}{I_+^2} \right] -y \right\} \\
\text{and} \qquad I_{\pm} &= x^2 + (y \pm 1)^2
\end{aligned}
\right.
\label{eqn:fg}  
\end{equation}
The OVP flow has two non-dimensional parameters which denote the circulation strength of the vortices, $\gamma$, and perturbation amplitude, $\epsilon$. 
For a complete analysis of transport in such a flow,\citeauthor{Rom-kedar1990}~\cite{Rom-kedar1990} studied a combination of these two parameters, but we will present two cases as application of the computational method implemented in \emph{Lober}. 
Due to the complicated nature of the velocity field in Eqn.~\eqref{eqn:ovp_expansion_eps}, we numerically compute the invariant manifolds using the \textit{globalization} technique (see Ref.~\cite{Koon2000a,Koon2011a,Naik2017}), and based on the benchmark algorithm in~\cite{Mancho2003}. The two cases considered here shows different manifold geometry, although topologically similar, as shown in Fig.~\ref{fig:ovp_case5af}. 
\begin{figure}[!ht]
	\centering
	\subfigure[$\gamma = 0.5$]{\includegraphics[width=0.45\linewidth]{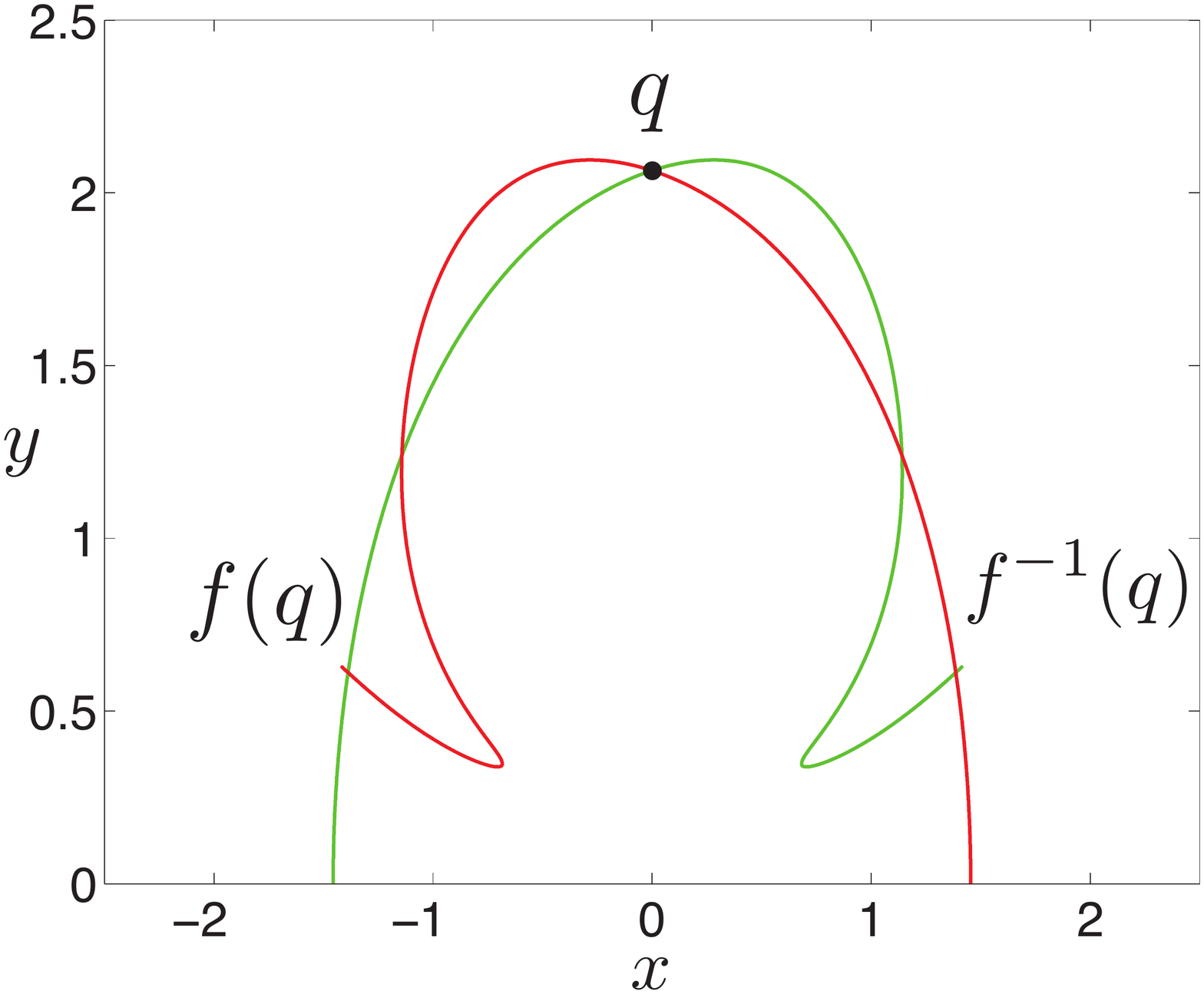}}
	\subfigure[$\gamma = 1.81$]{\includegraphics[width=0.45\linewidth]{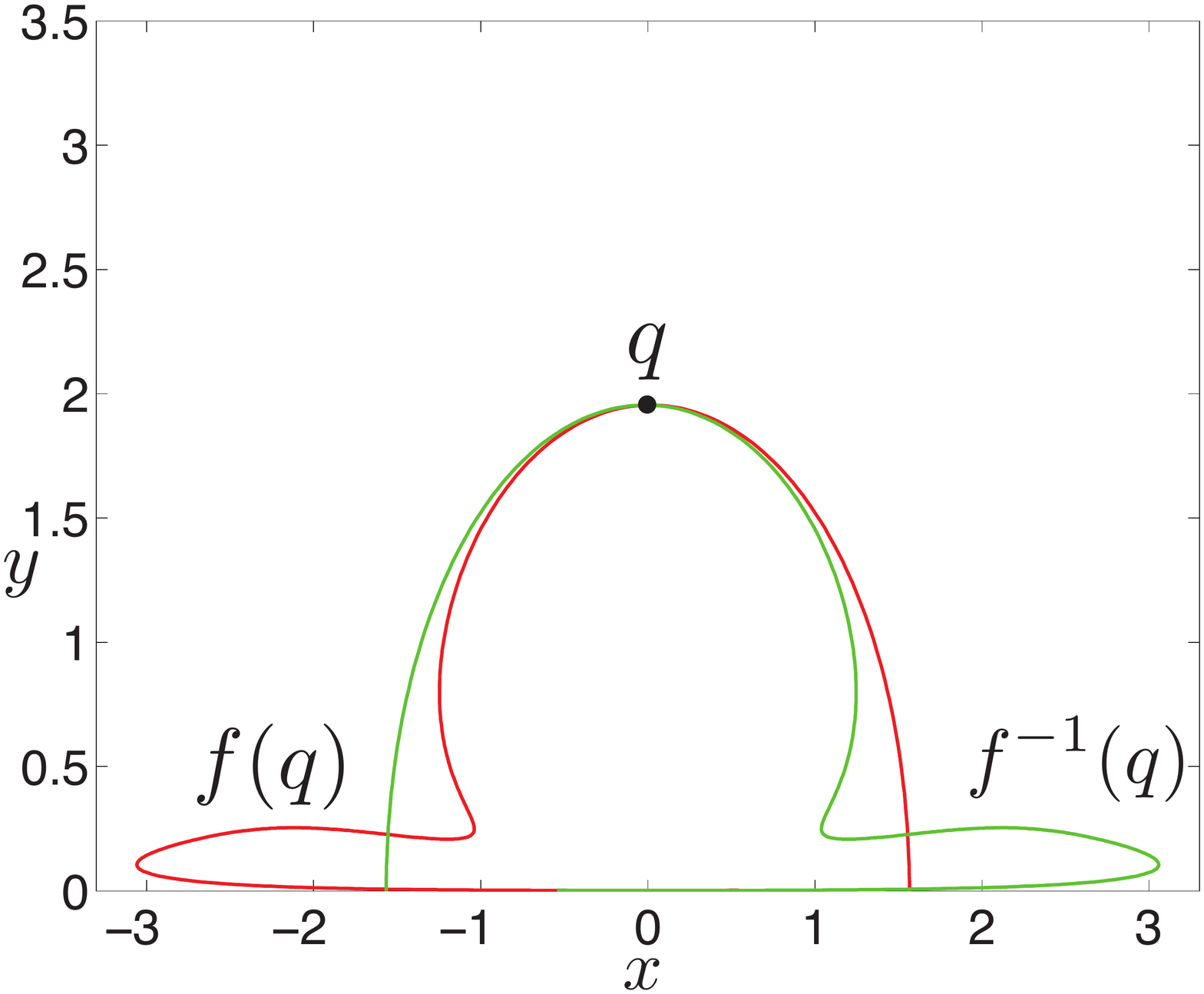}}
	\caption{Computed unstable(red) and stable(green) manifolds of the hyperbolic fixed points for perturbation amplitude of $\epsilon = 0.1$, (a) $\gamma=0.5$ and (b) $\gamma=1.81$. The manifolds are shown for qualitative comparison and a change in orientation for the $\gamma$ values as predicted by Melnikov theory in Ref.~\cite{Rom-kedar1990}.}
	\label{fig:ovp_case5af}
\end{figure}
We call the difference in geometry of \textit{turnstile}, the pair of lobes formed by the segments of manifolds between $q$ and $f^{-1}(q)$, by the near-orthogonal (shown in Fig.~\ref{fig:ovp_case5af}(a)) and near-tangent intersections (shown in Fig.~\ref{fig:ovp_case5af}(b)) that is generated in these two cases ($\gamma=0.5, 1.81$). Using the computed manifolds as inputs to \emph{Lober}, we identify the intersection points and intersection areas, that is pips and lobes, respectively, as shown in Fig.~\ref{fig:ovp_cases5}. 
\begin{figure}[!ht]
	\centering
	\subfigure[$\gamma = 0.5$]{\includegraphics[width=0.45\linewidth]{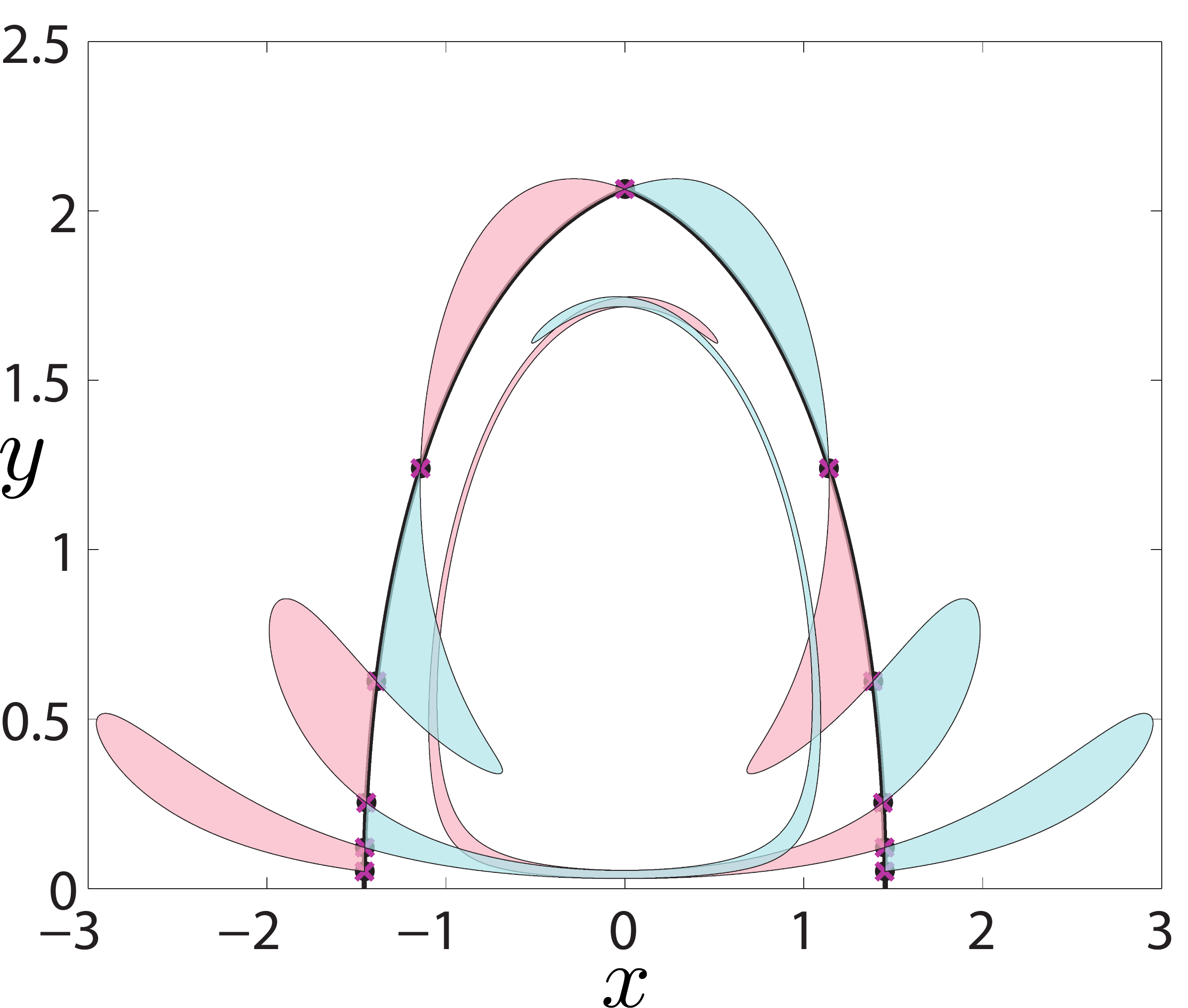}}
	\subfigure[$\gamma = 1.81$]{\includegraphics[width=0.45\linewidth]{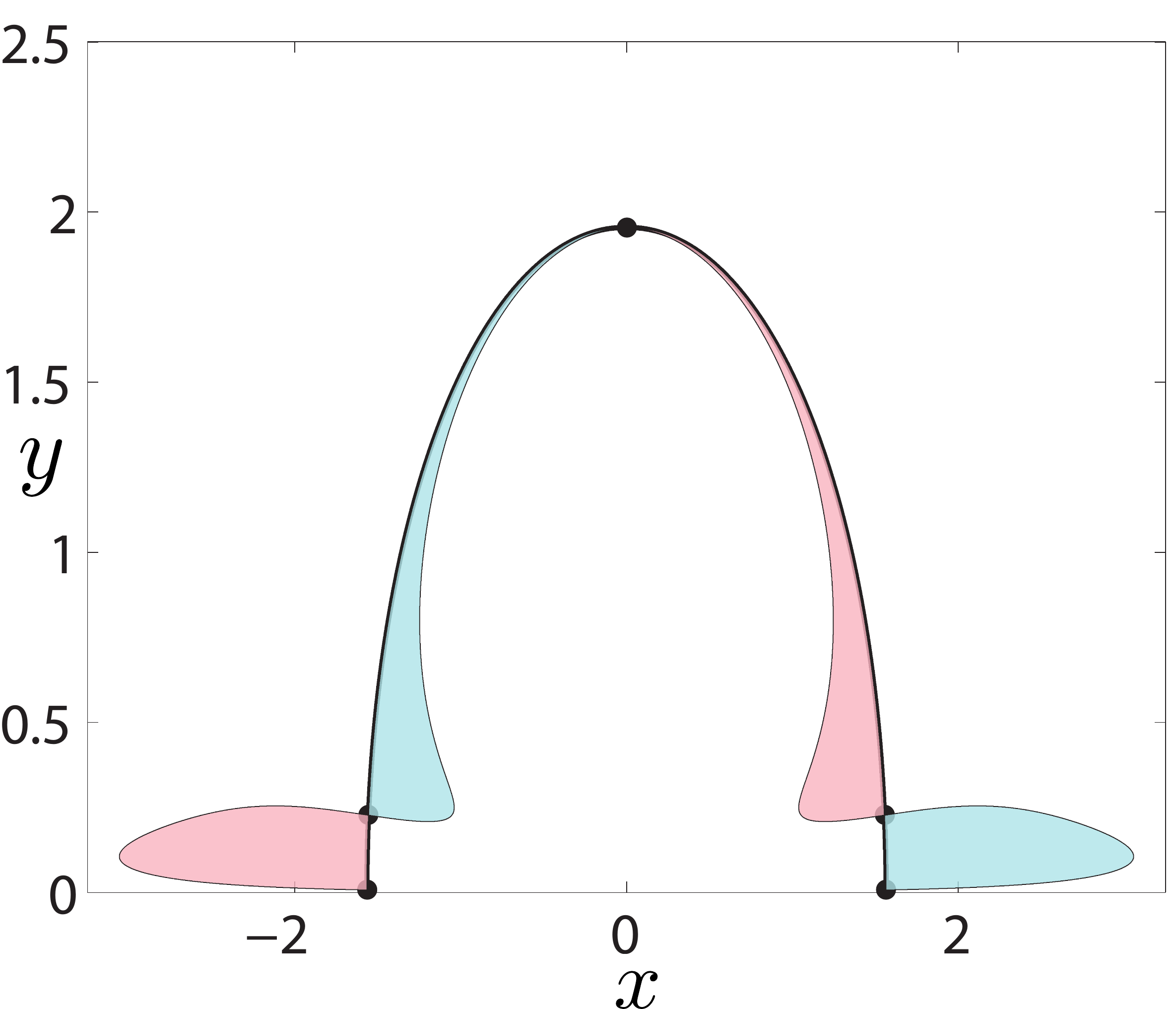}}
	\caption{Output of \emph{Lober} identifying the intersection points, that is pips shown as magenta cross on filled black circles, intersection areas, that is lobes shown as cyan and magenta filled area, and the boundary connecting the hyperbolic fixed points and the intersection point. The lobes colored as magenta map from region $R_1$ into $R_2$, and the ones that are colored cyan map from region $R_2$ to $R_1$ in successive iterates of the Poincar\'e map. For both the cases, $\epsilon=0.5$.}
	\label{fig:ovp_cases5}
\end{figure}
We note that this approach is based on computing the area of the turnstile and its iterates to compute the transport rate.

Another approach in phase space transport is to consider the iterates of the boundary parametrized using an intersection point, say $q$ as in Fig.~\ref{fig:ovp_unperturbed}(b), and the segments of stable/unstable manifolds connecting with the hyperbolic fixed points, say $p_+$ and $p_-$ as in Fig.~\ref{fig:ovp_unperturbed}(b). Thus, in case of time-dependent fluid flow, the boundary can be defined by 
\begin{equation}
B(q) = S[q,p_+] \cup U[q,p_-] 
\label{eqn:separ}
\end{equation}
where the point $q$ is the \emph{boundary intersection point} or \emph{bip} which is the first intersection of the manifolds computed for a given initial phase of the map. The bip parametrizes the boundary which acts as a partial barrier when perturbation is added to the flow. Furthermore, the turnstile can be defined in terms of the bip, and its pre-image as

\begin{equation}\label{eqn:turnstile}
L_{1,2}(1) \bigcup L_{2,1}(1) = S[f^{-1}(q),q] \bigcup U[f^{-1}(q),q]
\end{equation}
where $L_{i,j}(n)$ denotes a lobe in a region $R_i$ that is mapped to a region $R_j$ after n iterates, using the notation in Ref.~\cite{Wiggins1992chaotic} and we show a general schematic in Fig.~\ref{fig:regions}. We note here that this definition of turnstile lobe uses the pre-image of the bip, and requires computing the intersection of the boundary, Eqn.~\eqref{eqn:separ}, and its pre-images under the Poincar\'e map.
Thus, we can express the particles of species $S_i$ (material in the region $R_i$ as shown in Fig.~\ref{fig:regions}) that gets transported across the partial barriers in terms of intersection areas of the turnstile lobe, and iterates of the Poincar\'e map~\eqref{eqn:flow_map2D}. Following the theory described in \cite{Wiggins1992chaotic}, the quantities used in computation of transport rate are 
\begin{itemize}
	\item $a_{i,j}(n) :$ Flux of species $S_i$ from region $R_i$ into region $R_j$ on the $n^{th}$ iterate.
	\item $T_{i,j}(n) :$ Total amount of species $S_i$ contained in region $R_j$ after the $n^{th}$ iterate. 
\end{itemize}
We have the following approaches for computing these quantities.
\begin{enumerate}
	\item \textit{Boundary method:} Generate the boundary by selecting a point $q$ as bip, and compute the pre-images of this boundary along with the pre-images of $q$ ($f^{-1}(q),f^{-2}(q), \ldots$). This can be computed by evolving in backward time, or executing \emph{Lober} with an option to define the iterated boundary. This is shown in Fig.~\ref{fig:separ_BIPS}, and these curves are now used as input to the \emph{light} option of \emph{Lober} (see command options in~\ref{sect:usage}).
	
	\item \textit{Lobe method:} Obtain the turnstile lobes $L_{1,2}(1),L_{2,1}(1)$, and images of entraining (in the sense of entering the region $R_1$ in the next iterate) turnstile lobe $L_{2,1}(1)$ or pre-images of detraining turnstile lobe $L_{1,2}(1)$. These are obtained from \emph{Lober} as closed curves and given as input to the \emph{light} option of \emph{Lober} to compute intersection areas since the lobes and its iterate can form non-transverse intersections and may require the \emph{densifier} option.
\end{enumerate}
\begin{figure}[!ht]
	\centering
	\includegraphics[width=0.95\textwidth]{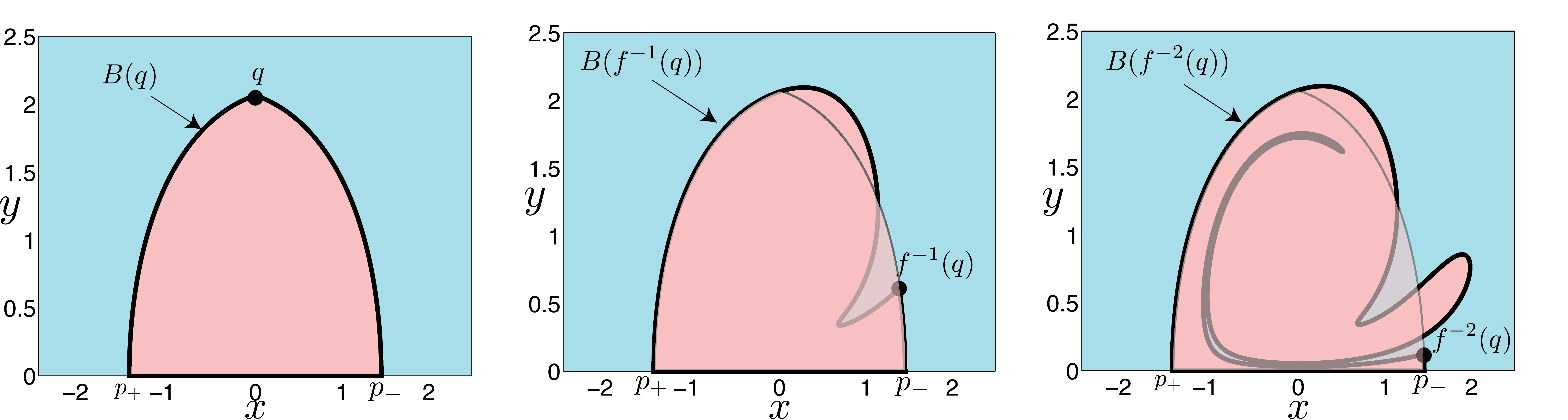} 
	\caption{Showing the boundary with $q$ as a bip and its backward evolution with the pre-images of $q$ as bips. The region $R_1$ is the transparent layer on the pre-images.}
	\label{fig:separ_BIPS}
\end{figure}
Thus, using the Lemma 2.3 and Theorem 2.5 in Ref.~\cite{Wiggins1992chaotic}, the quantity $a_{\rm 2,1}(n)$ in terms of the turnstiles and intersection areas can be expressed as
\begin{align}
a_{\rm 2,1}(n) = T_{2,1}(n) - T_{2,1}(n-1) = \mu(L_{2,1}(1)) - \sum_{m=1}^{n}\mu( L_{2,1}(1)  \cap f^{m-1}(L_{1,2}(1)) )
\label{eqn:aij}
\end{align}
where the quantity under the sum denotes the intersection area of the lobe $L_{2,1}(1)$ and pre-images of $L_{1,2}(1)$, and is shown in the Fig.~\ref{fig:lobeD_preimages_intersect_lobeE}. 
\begin{figure}[!ht]
	\centering
	\includegraphics[width=1.0\textwidth]{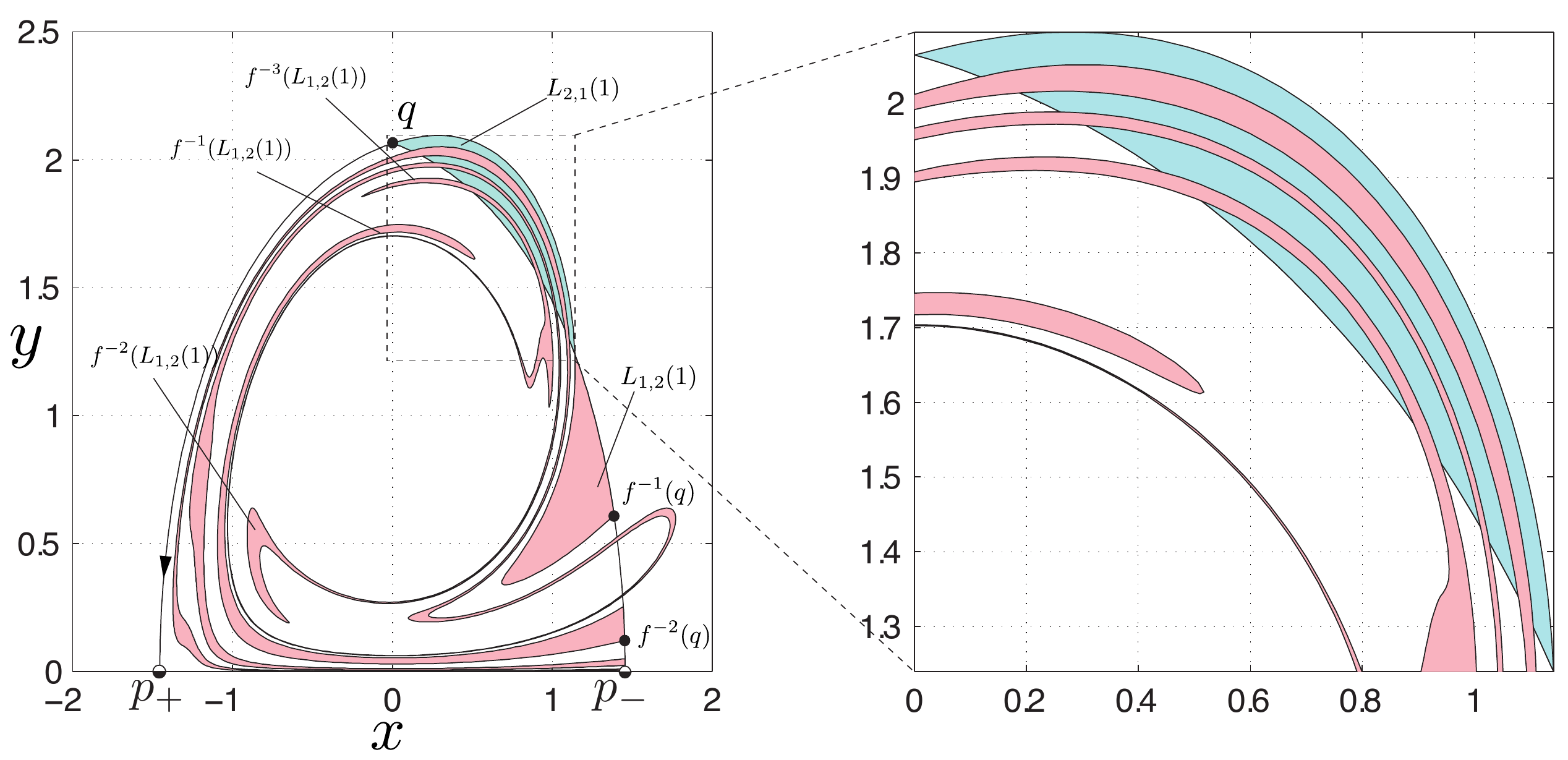}
	\caption{Shows the \textit{turnstile} lobe in cyan (entraining lobe labeled $L_{1,2}(1)$), magenta (detraining lobe labeled $L_{2,1}(1)$) and its 3 pre-images for the OVP flow. The zoom-in view shows the intersection regions that are used in quantifying the transport. We have used $\epsilon=0.1,\gamma=0.5$.}
	\label{fig:lobeD_preimages_intersect_lobeE}
\end{figure}
While, the lobe method is a reduced order calculation for transport, the boundary method is useful when multilobe, self-intersecting turnstile generate near-tangent intersections. In terms of the boundary, the quantity $T_{1,2}(n)$ is given by
\begin{align}
T_{1,2}(n) = [B(q) \setminus (B(q) \cap f^{-n}(B(q)))]
\end{align}
and the quantity $a_{1,2}(n)$ is
\begin{align}
a_{1,2}(n) = T_{1,2}(n) - T_{1,2}(n-1)
\end{align}
For validating the output from \emph{Lober}, invariant manifolds of the hyperbolic fixed points $p_{+}, p_{-}$ are provided as input and the outputs are shown in Fig.~\ref{fig:ovp_cases5}. It clearly identifies the lobes, pips, and the boundary parametrized by the pip at $q = (0.0,2.065)$, and indexed as $\lfloor\# \; \text{pips}/2\rfloor$ for a given number of pips.
Using the boundary method, we obtain the turnstile lobe area, $\mu(L)$, for different circulation strength of the vortices, $\gamma$, values, and show the numerical result in Fig.~\ref{fig:lobe_area_gamma}. However, for validation purpose we have performed the intersection of lobe area computations to obtain $a_{ij}$ in Eqn.~(\ref{eqn:aij}) and show the results for first 7 iterations of the map $f$ in Fig.~(\ref{fig:ak_vs_k}).
\begin{figure}[!ht]
	\centering
	\subfigure[]{\includegraphics[width=0.45\textwidth]{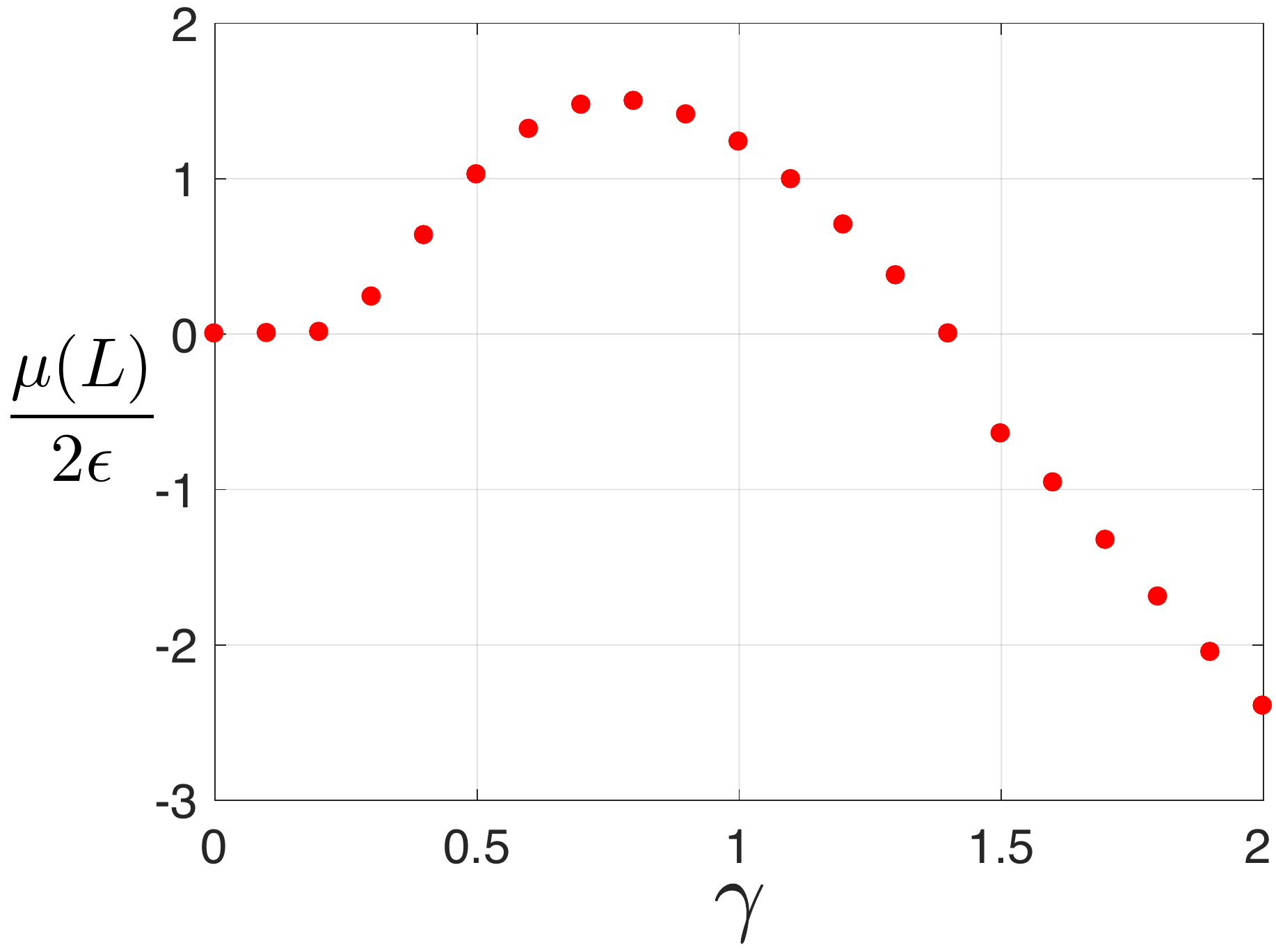}\label{fig:lobe_area_gamma}}
	\subfigure[]{\includegraphics[width=0.45\textwidth]{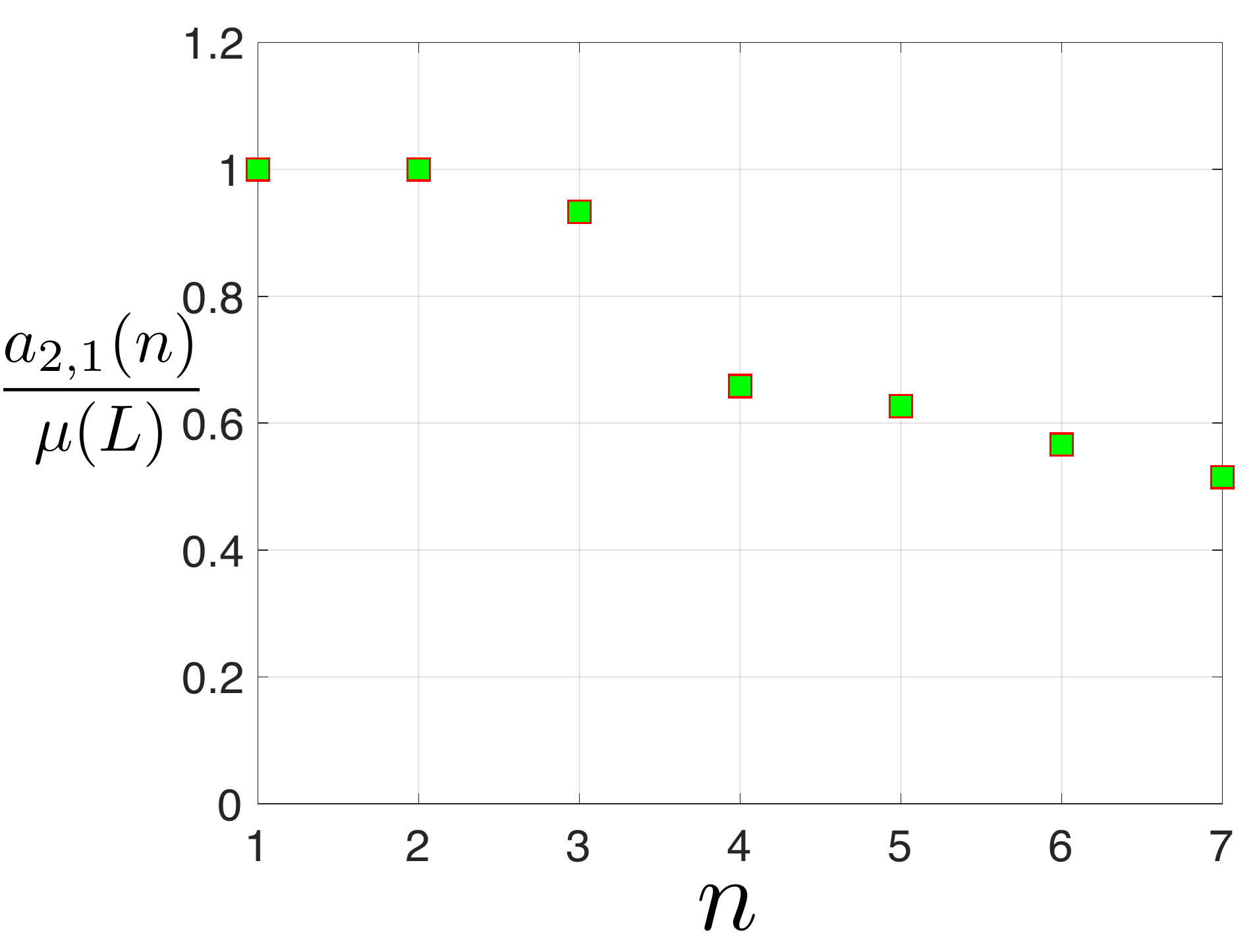}\label{fig:ak_vs_k}}
	\caption{(a) Shows the area of lobes for different $\gamma$ values with $\epsilon = 0.1$. It is to be noted that for $\gamma \geqslant 1.5$, the lobe area is shown as negative to imply the change in the geometry of the manifold intersection. This agrees well with Fig.~9 in Ref.~\cite{Rom-kedar1990} which compares the brute force lobe area calculations with Melnikov function. (b) Shows the normalized fluid volume that given by the quantity $a_{\rm 2,1}(n)$ in Eqn.~\eqref{eqn:aij} for 7 iterates of the map $f$, and for perturbation period $\gamma = 0.5$ and $\epsilon = 0.1$.}
\end{figure}
 
\subsection{Escape from a potential well} \label{sect:escape}

In this section, we will apply the numerical method to computation of rate of escape from a potential well in the context of capsize of a ship studied as a nonlinear coupling of roll and pitch degrees of freedom. The full description and analysis of this system can be found in Ref.~\cite{Naik2017} which applies tube dynamics~\cite{Koon2000a}, a geometric method of phase space transport, to study capsize of a ship in a 2 degree of freedom (DOF) system. The ship capsize model can be considered as an archetype for studying escaping dynamics in an engineering problem, and hence the rate of escape from the underlying potential well provides estimate of probability of capsize. As derived in Ref.~\cite{Naik2017}, the rescaled Lagrangian is given by
\begin{align}
\mathcal{L}(x,y,v_x,v_y) &= \frac{1}{2}v_x^2 + \frac{1}{R^2}v_y^2 - V(x,y) \label{eqn:rescale_Lag}\\
\text{where,} \qquad \qquad V(x,y) &= \frac{1}{2}x^2 + y^2 - x^2y \label{eqn:rescale_pe}
\end{align}
is the underlying potential energy, and is shown in Fig.~\ref{fig:eff_pot_colormap_5energy_levels}. 
\begin{figure}[!ht]
	\centering
	\includegraphics[width=0.45\textwidth]{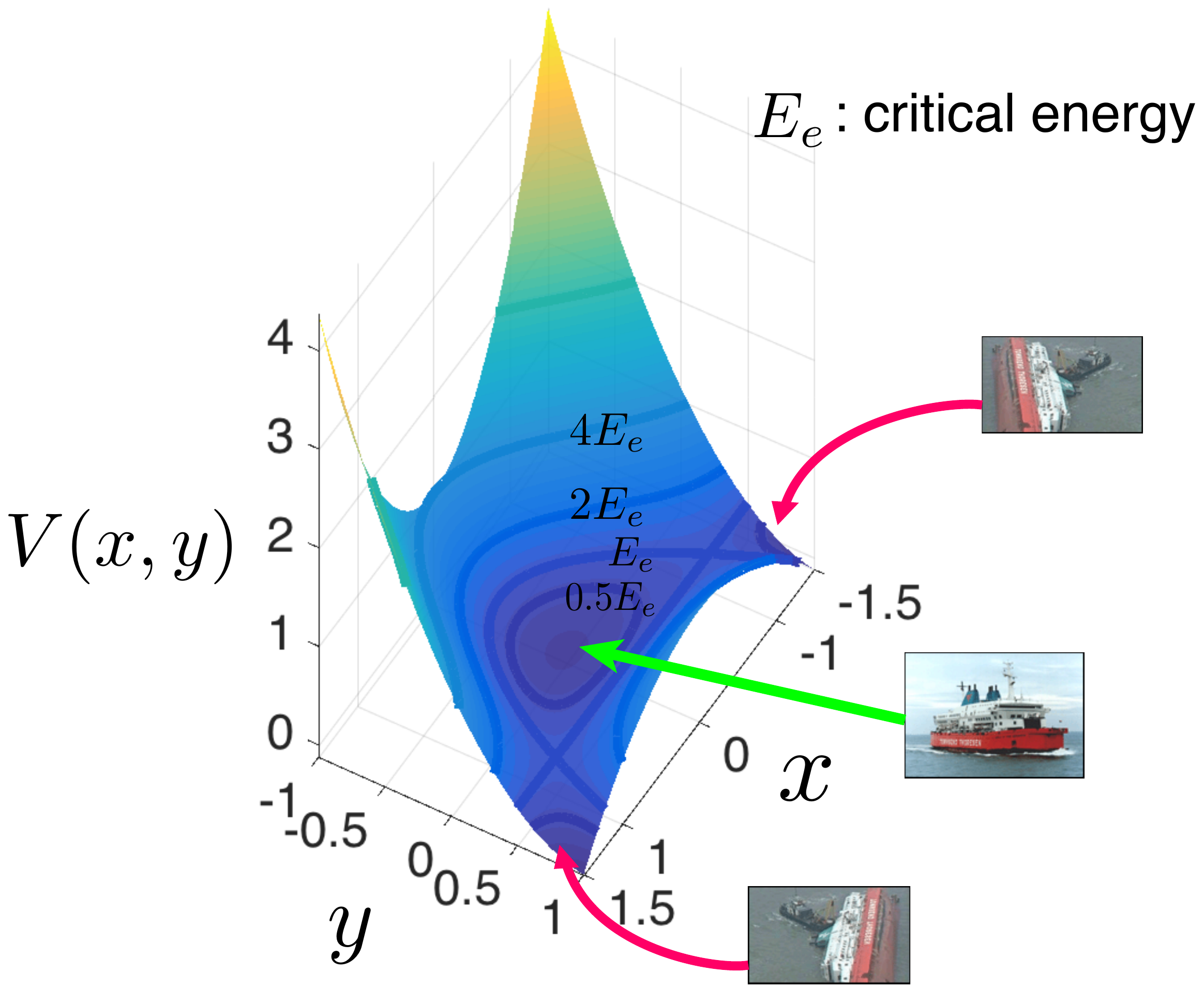}
	\caption{Shows the effective potential energy with an upright ship in the region that corresponds to bounded motion inside the well and a capsized ship in the region that corresponds to unbounded motion. The total energy of the system can be considered as fixing a height of this potential well and shown here as contour lines of $0.5E_e, E_e, 2E_e, 4E_e$ on the configuration space $(x,y)$ for different values above and below the critical energy $E_e$.}
	\label{fig:eff_pot_colormap_5energy_levels}
\end{figure}
Using the Lagrangian Eqn.~\ref{eqn:rescale_Lag}, the equation of motion is given by
\begin{equation}
\left.
\begin{aligned}
\ddot{x} &= -x + 2xy \\
\ddot{y} &= -R^2y + \frac{1}{2}R^2x^2
\label{eqn:rescale_EOM}
\end{aligned}
\right.
\end{equation}
In this rescaled form, the only parameter is $R=\omega_{\theta}/\omega_{\phi}$, which is the ratio of natural pitch and roll frequencies. 
The rescaling of the coordinates and time has made the nonlinear coupling term of the potential into unity and the original coordinates can always be recovered using the transformation
\begin{equation}
x = \frac{\phi}{\phi_e}, \qquad y = \frac{\theta}{2\theta_e}, \qquad \bar{t} = \omega_{\phi}t \label{eqn:rescaling_coords}
\end{equation}
We note that Eqn.~\ref{eqn:rescale_EOM} are identical to those derived by Ref.~\cite{Thompson1996}, where they were called \emph{symmetric internal resonance} equations. The potential energy~\eqref{eqn:rescale_pe} is also similar to the Barbaris potential studied by the chemistry community in Refs.~\cite{Sepulveda1994a,Babyuk2003,Barrio2009a}. 

The equation of motion of a ship in absence of non-conservative, time-dependent wave forcing is thus given by the first-order ordinary differential equations as
\begin{equation}
\left.\begin{aligned}
\dot{x} &= v_x \\
\dot{y} &= v_y \\
\dot{v}_x &= -x + 2xy \\
\dot{v}_y &= -R^2y + \frac{1}{2}R^2x^2  
\end{aligned}\right.
\label{eqn:capsizeODE_1storder}
\end{equation}
which conserves the energy 
\begin{align}
E(x,y,v_x,v_y) = \frac{1}{2}v_x^2 + \frac{1}{R^2}v_y^2 + \frac{1}{2}x^2 + y^2 - x^2y \label{eqn:cons_sys_energy}
\end{align}
Let $\mathcal{M}(e)$ be the energy surface given by setting the total energy~\eqref{eqn:cons_sys_energy} equal to a constant, that is,
\begin{align}
\mathcal{M}(e) = \{(x,y,v_x,v_y) | E(x,y,v_x,v_y) = e \} \label{eqn:energy_surf}
\end{align}
where $e$ denotes the constant value of energy. The projection of energy surface onto the configuration space $(x,y)$ is historically known as \textit{Hill's region}, and defines the region that is energetically accessible, and is shown in gray in Fig.~\ref{fig:hills_region}.
\begin{figure}[!ht]
	\centering
	\subfigure[]{\includegraphics[width=0.3\textwidth]{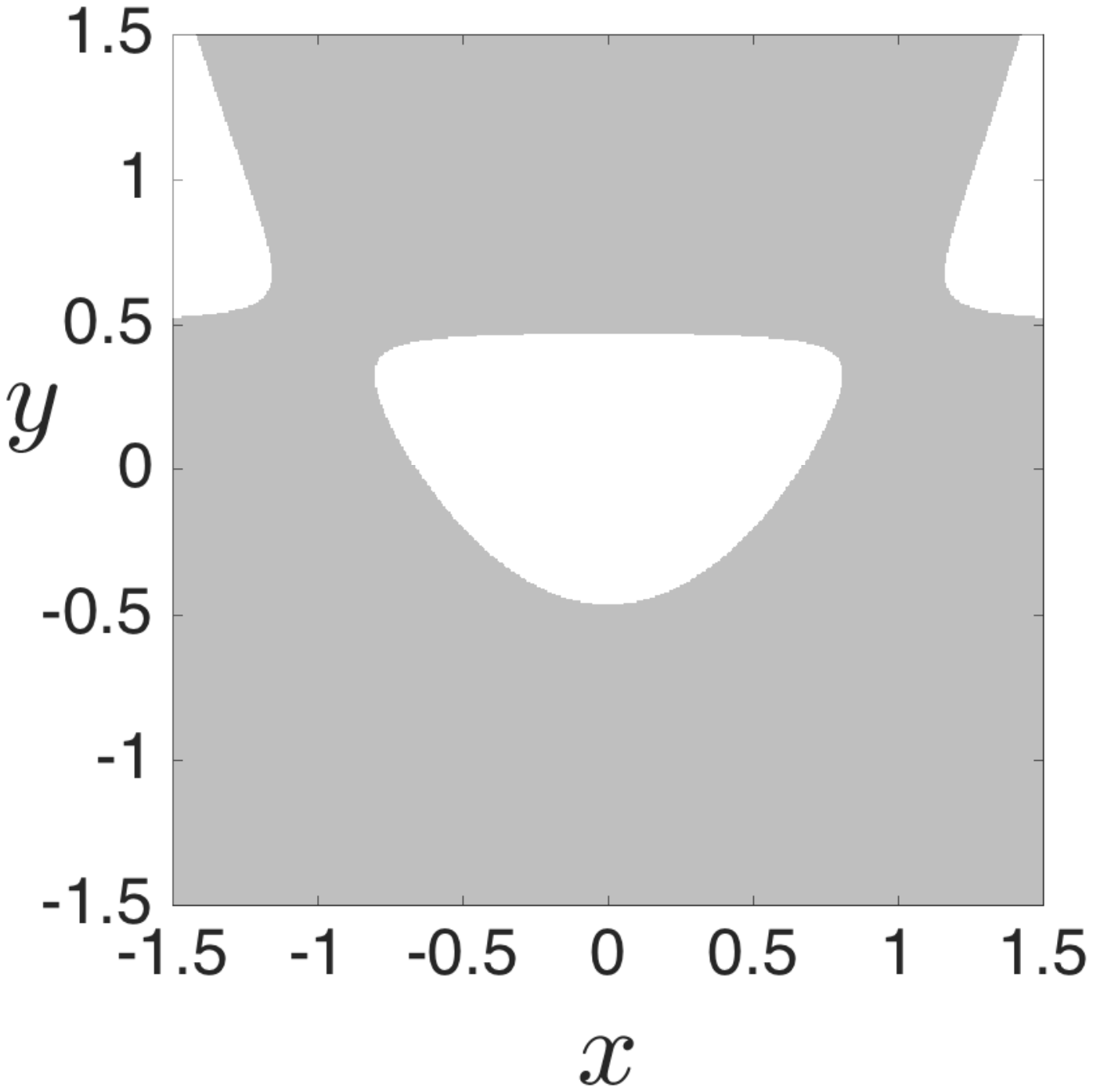}\label{fig:hills_region_energy22e-2}}
	\subfigure[]{\includegraphics[width=0.3\textwidth]{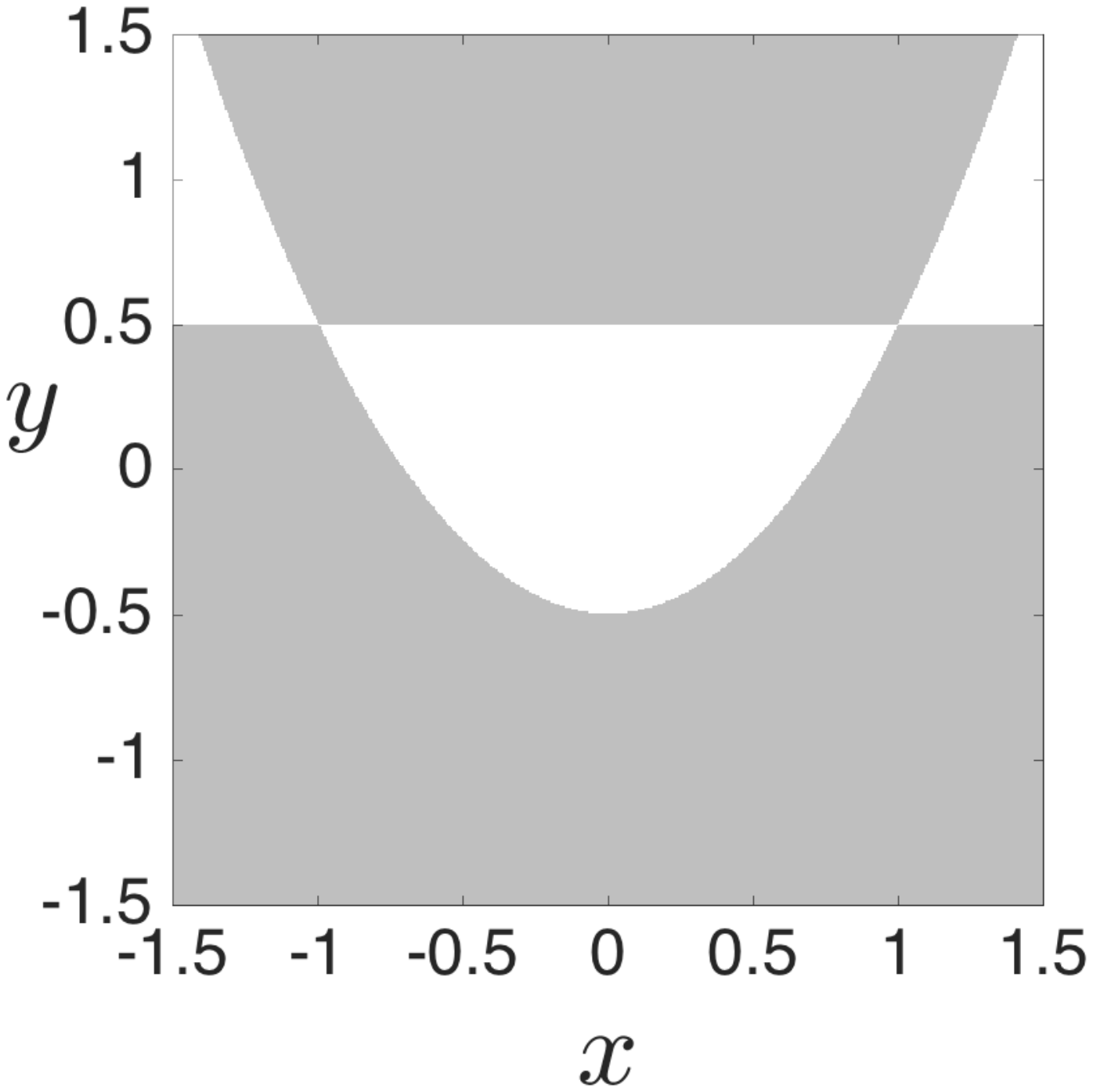}\label{fig:hills_region_energy25e-2}}
	\subfigure[]{\includegraphics[width=0.3\textwidth]{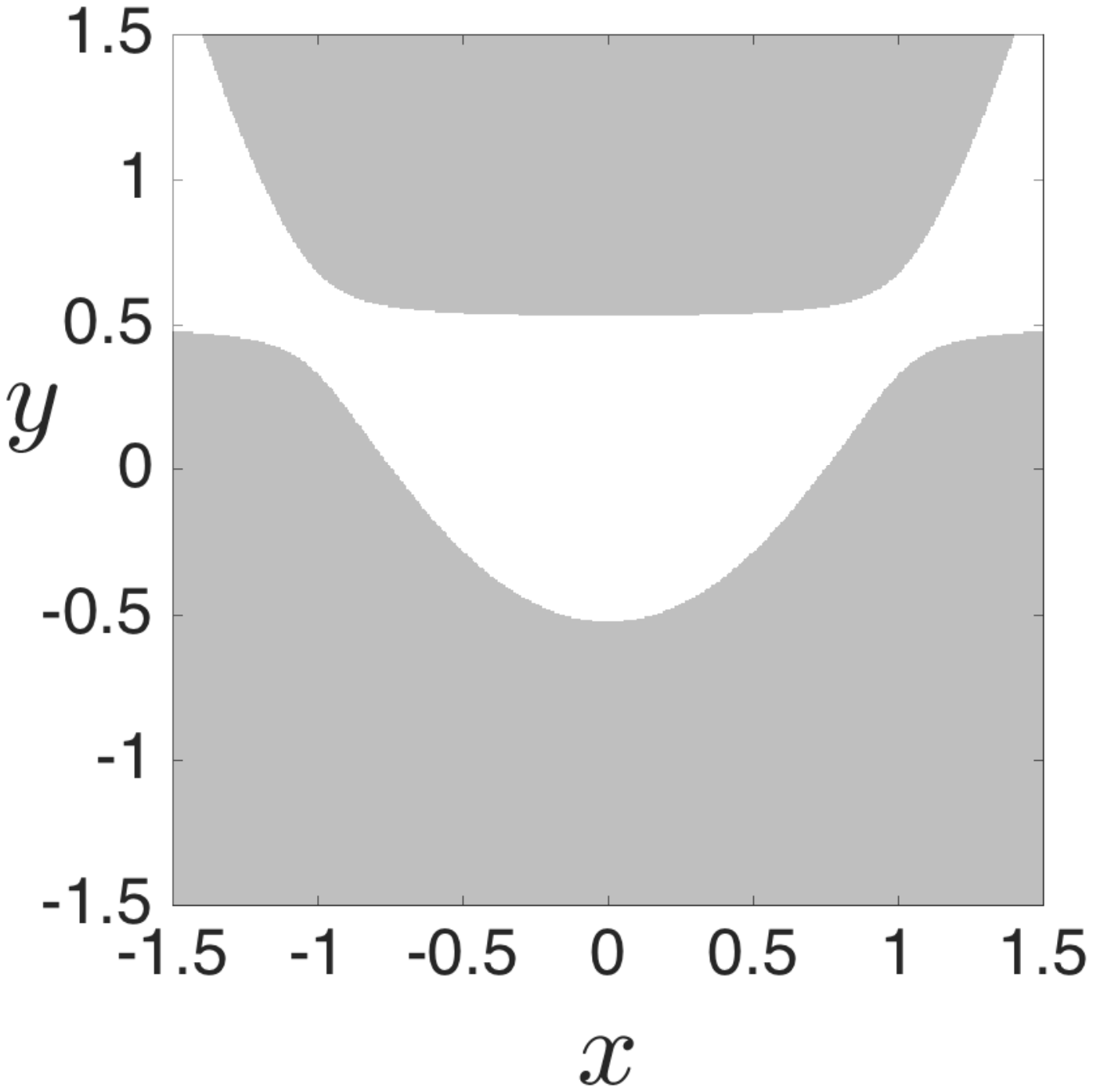}\label{fig:hills_region_energy28e-2}} 
	\caption{Show the Hill's region for (a) $e < E_e$, (b) $e = E_e$, and (c) $e > E_e$ where $E_e$ denotes the critical energy. The white region is the energetically accessible region bounded by the zero velocity curve, while the gray region is the energetically forbidden realm where kinetic energy is negative and motion is impossible.}
	\label{fig:hills_region}
\end{figure}
For a fixed energy $e$, the surface $\mathcal{M}(e)$ is a three-dimensional surface embedded in the four-dimensional phase space, $\mathbb{R}^4$. Furthermore, we can consider a cross-section of this three-dimensional surface, using a two-dimensional Poincar\'e surface-of-section (SOS) given by
\begin{equation}
U_1 = \left\{(y,v_y) | x = 0, v_x(y,v_y;e) > 0 \right\} , \qquad \text{motion to the right}  \label{eqn:sos_U1}
\end{equation}
The SOS can be used to define a two-dimensional ($\mathbb{R}^2 \rightarrow \mathbb{R}^2$) return map 
\begin{equation}
g : U^1 \rightarrow U^1 \label{eqn:return_map2D}
\end{equation}
for a constant energy, $e$, and $v_x > 0$ is used to enforce a directional crossing of the surface. This is shown for successively increasing energy in Fig.~\ref{fig:sosU1_diff_energy} for the SOS~\eqref{eqn:sos_U1}.
\begin{figure}[!h]
	\centering
	\subfigure[$e=0.22$]{\includegraphics[width=0.32\textwidth]{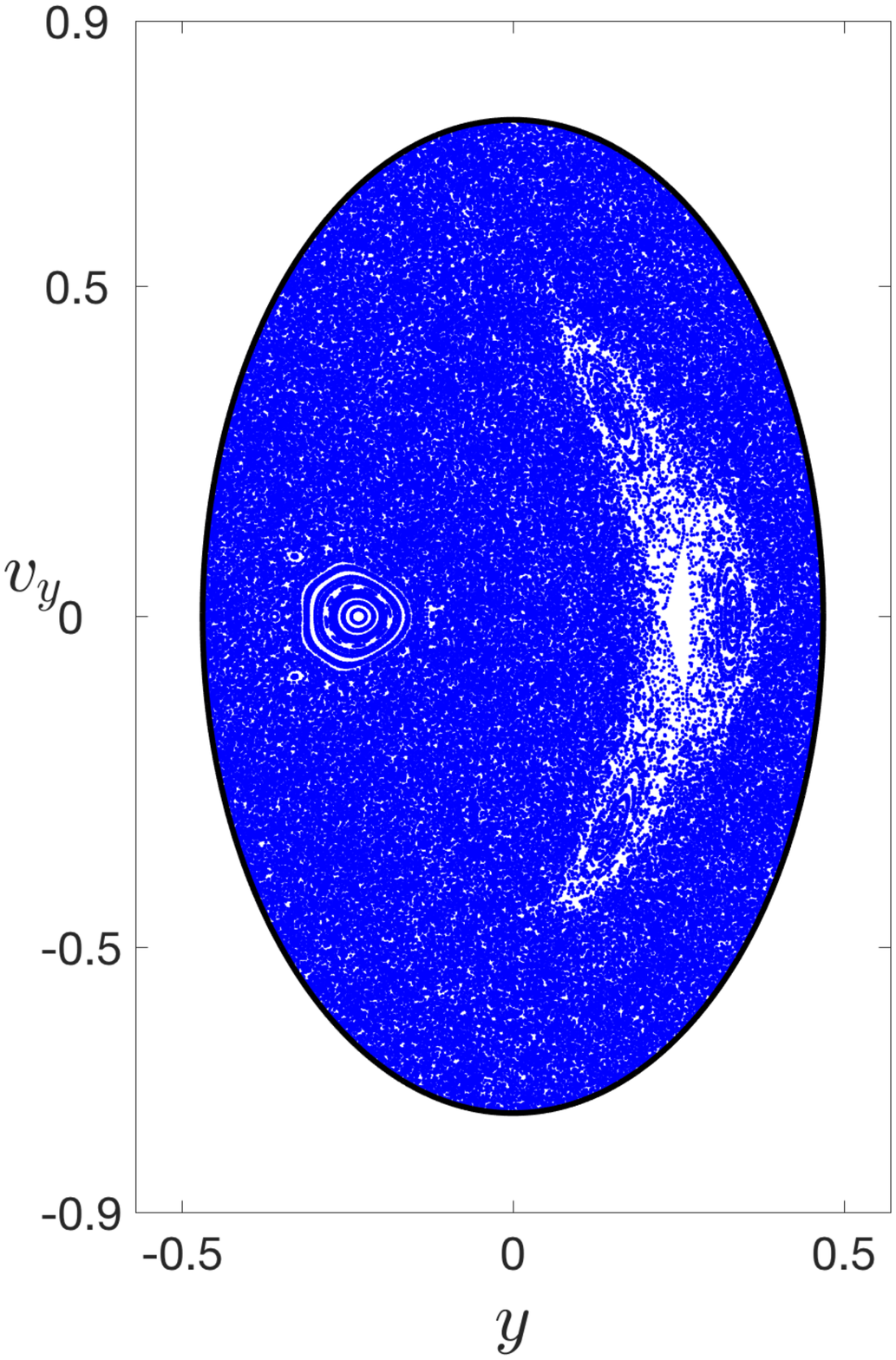}\label{fig:sos_energy22e-2}}
	\subfigure[$e=0.25$]{\includegraphics[width=0.32\textwidth]{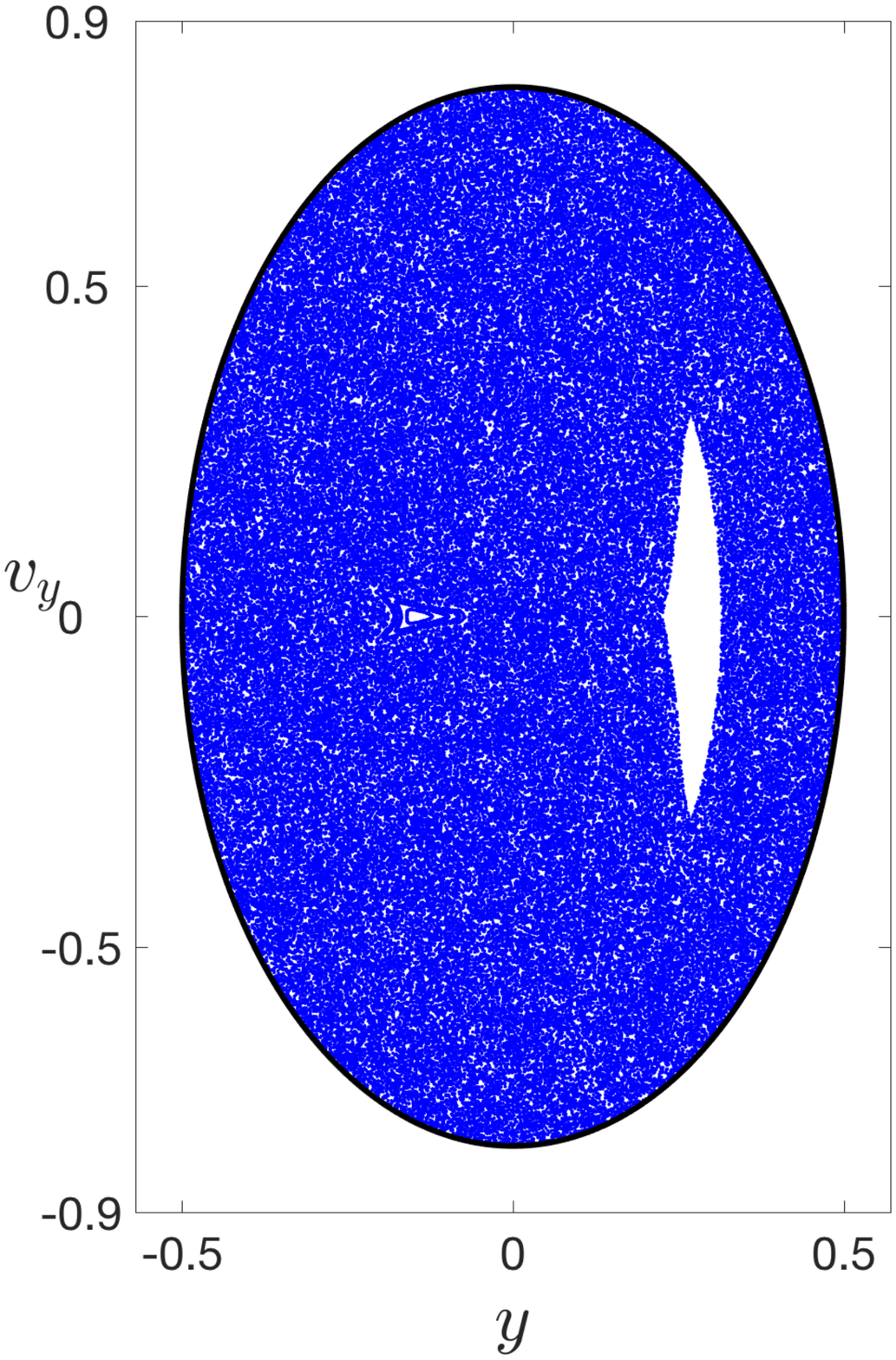}\label{fig:sos_energy25e-2}}
	\subfigure[$e=0.28$]{\includegraphics[width=0.31\textwidth]{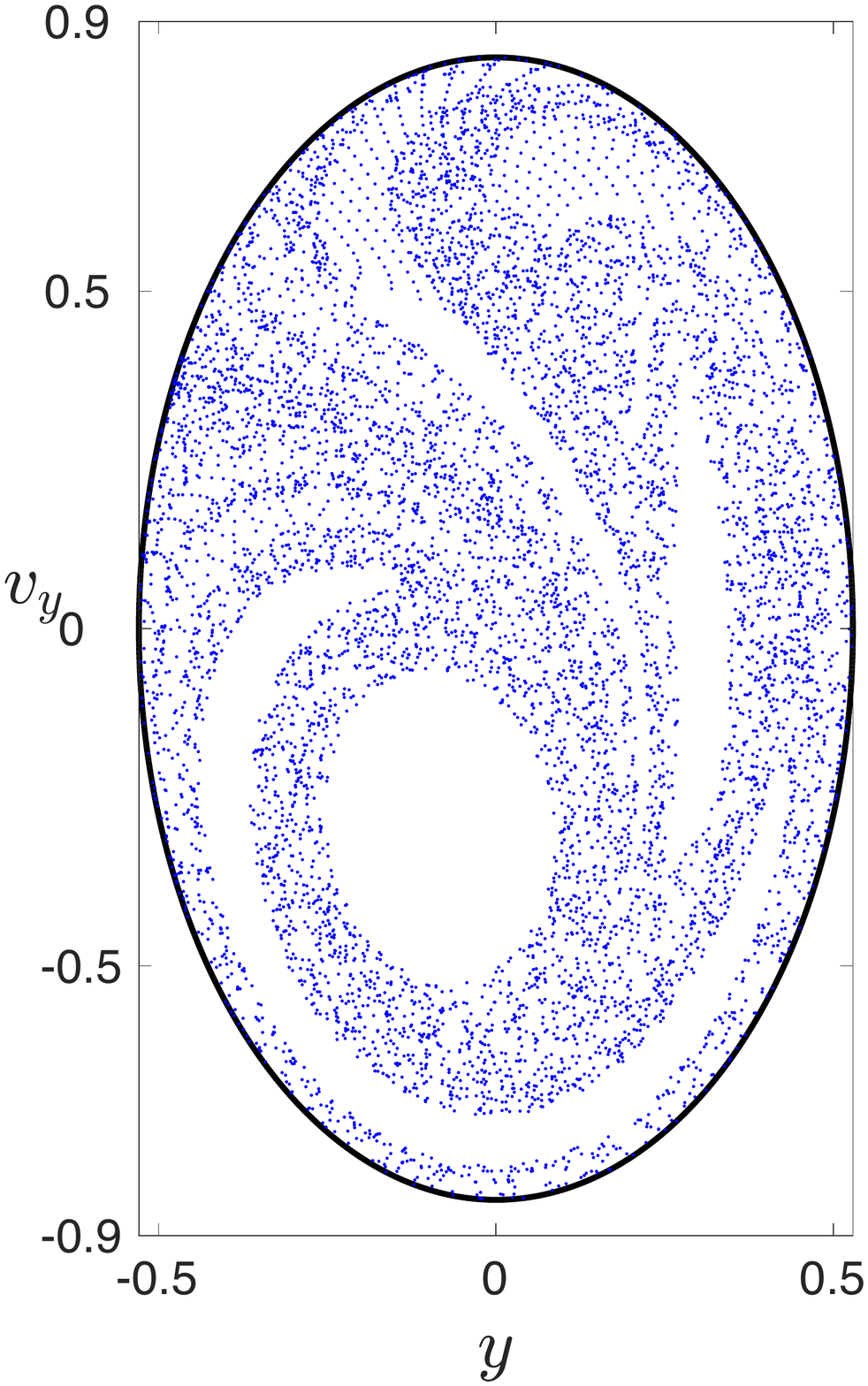}\label{fig:sos_energy28e-2}}
	\caption{Poincar\'e surface-of-section (SOS) of the energy surface showing orbits of the return map for different energy values in (a) $e=0.22$, (b) $e=0.25$, and (c) $e=0.28$. In the absence of damping and wave forcing, the system conserves energy. When energy is below the critical value, $E_e$, trajectories can not escape the potential well, and must intersect the SOS given by Eqn.~\eqref{eqn:sos_U1}. When the energy is above the critical value, trajectories will escape via the right or left saddles, and do not intersect the surface~\eqref{eqn:sos_U1} infinitely often. $R = 1.6$ is used in all the cases.}
	\label{fig:sosU1_diff_energy}
\end{figure}

The energy of the equilibrium points is called the critical energy (or escaping energy) which is given by $E_e = 0.25$. As the trajectories approach this energy from below, capsize becomes inevitable and this can be interpreted in terms of the potential energy well. Since the potential energy~\eqref{eqn:rescale_pe} is independent of any system parameter, the discussion based on this potential well will be more general. 

The dynamical system given by Eqn.~\eqref{eqn:capsizeODE_1storder} has saddle equilibrium points at $(\pm 1, 0.5, 0, 0)$ and the realms of possible motion in configuration space between the saddles is called the \textit{non-capsize realm}, and all possible states beyond as \textit{capsize realm}. When the energy is above the critical energy, $E_e$, bottlenecks appear around the saddle points (as shown in Fig.~\ref{fig:hills_region_energy28e-2}) that acts as partial barriers of capsize because trajectories escaping the potential well, in absence of forcing, reside inside the cylindrical manifolds of geometry $\mathbb{R}^1 \times \mathbb{S}^1$. Traditionally, escape rate of trajectories from the potential well, that is the non-capsize realm, is computed using \textit{tube dynamics}~\cite{Koon2000a,Koon2011a,Dellnitz2005}, or \textit{transition state theory}~\cite{jaffe2002statistical} along with Monte-Carlo method to estimate area that escape in a given time interval. 

However, we will use the implementation in \emph{Lober} to compute the escape rate from the potential well, and briefly summarize the steps involved in computing the curves that enclose the escaping regions on the SOS (see Ref.~\cite{Naik2017} for details). 

\textbf{a.} We select a suitable Poincar\'e SOS which is intersected by almost all of the trajectories that escape from the potential well. It is discussed in Ref.~\cite{Jaffe1999} as selecting a periodic orbit dividing surface that avoids pathological intersection of manifolds and Poincar\'e SOS. However, we make an educated guess for the present system, and use the SOS given by Eqn.~\eqref{eqn:sos_U1}, which defines a plane to capture motion of trajectories to the right, and shown as magenta plane in Fig.~\ref{fig:3D_tubes_mod7_v1}(a). 
\begin{figure}[!ht]
	\centering
	\subfigure[]{\includegraphics[width=0.4\textwidth]{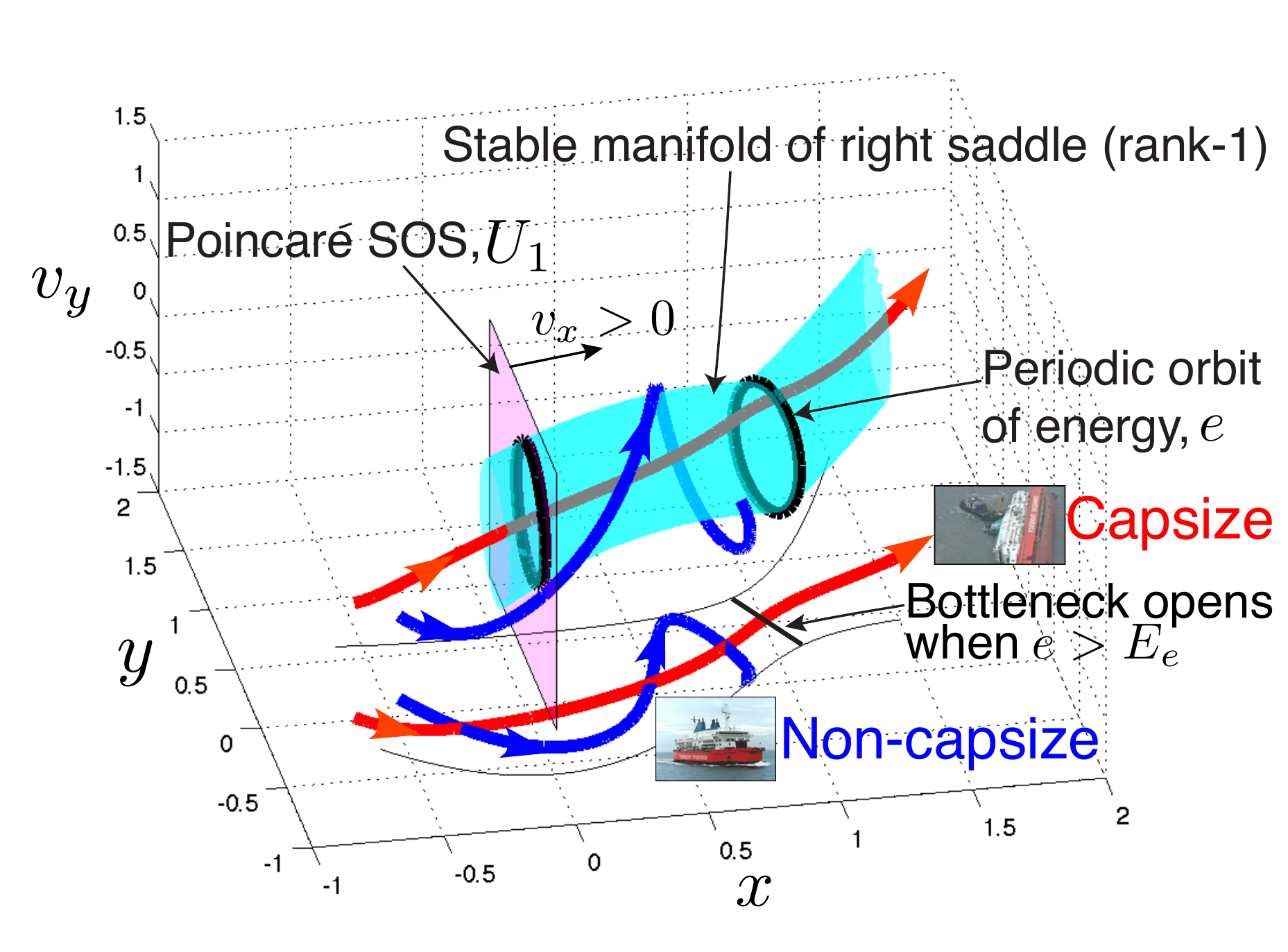}}
	\subfigure[]{\includegraphics[width=0.55\textwidth]{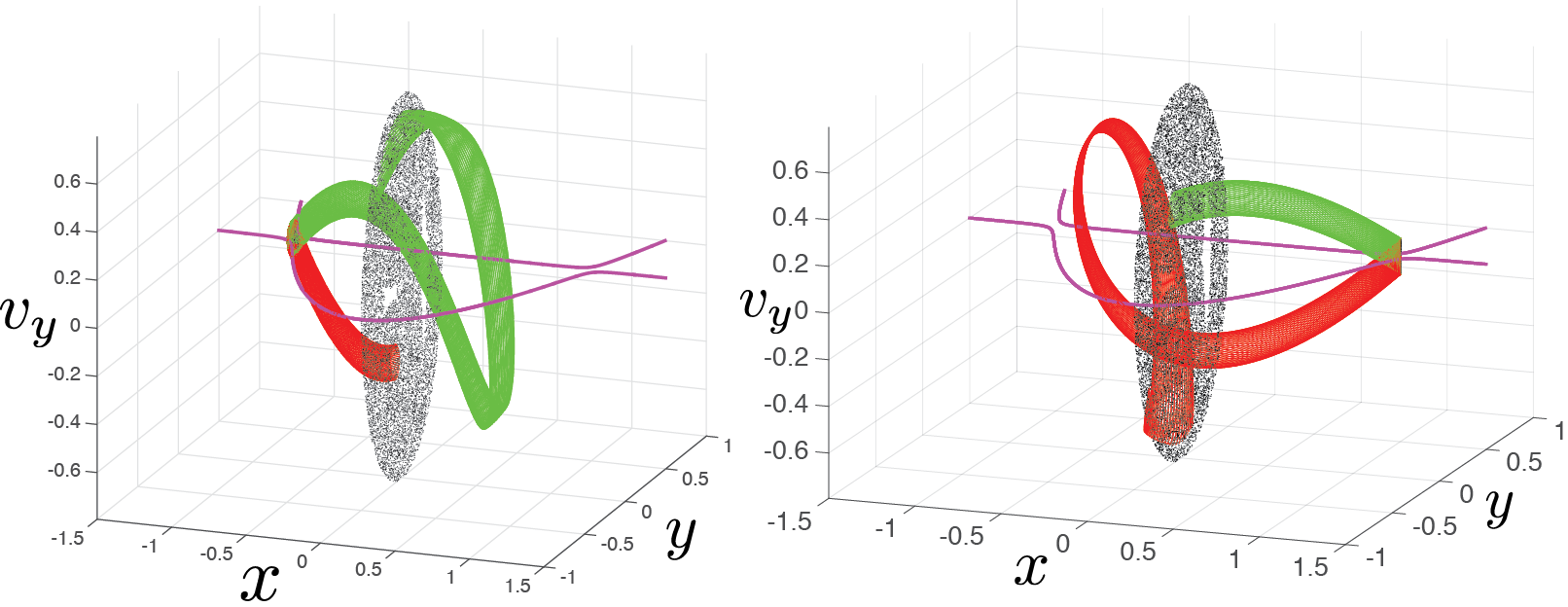}}
	\caption{(a) Shows the Poincar\'e SOS, $U_1$~\eqref{eqn:sos_U1}, as the magenta plane and the stable manifold of right saddle is shown as the cyan surface for the energy $e = 0.28$. The stable manifold of a given saddle up to its first intersection with $U_1$ is the pathway that leads to imminent capsize via the saddle. The trajectory that leads to escape from the potential well and corresponds to the imminent capsize of a ship, for example the red trajectory, lies inside the tube. Similarly, a trajectory that does not escape the potential well, for example the blue trajectory, and corresponds to an upright ship, lies outside the tube. These example trajectories are shown here in the $x-y-v_y$ space for the energy $e = 0.28$, and also as projection in the configuration space $x-y$. (b) Tube manifolds associated with the left and right saddles in the $x-y-v_y$ space for a given energy $e$. The green and red surfaces denote the stable and unstable tube manifolds, respectively.}
	\label{fig:3D_tubes_mod7_v1}
\end{figure}
The ($y-v_y$) SOS captures the escape trajectories when the energy is greater than the critical energy as shown in Fig.~\ref{fig:sosU1_diff_energy}.  

\textbf{b.} We obtain the periodic orbit associated with the rank-1 saddle using differential correction. The periodic orbit (p.o.) corresponding to a energy level $e + \Delta e$ is the bounding p.o., and projects as a line on the configuration space, $(x,y)$ as shown in Fig.~\ref{fig:3D_tubes_mod7_v1}(a).

\textbf{c.} We compute the invariant manifolds associated with the p.o. of energy, $e + \Delta e$, using globalization and numerical continuation for the left and right saddle equilibrium points, and as shown in Fig.~\ref{fig:3D_tubes_mod7_v1}(b). The invariant manifolds being codimension-1 in the 3-dimensional energy surface, and having geometry $\mathbb{R}^1 \times \mathbb{S}^1$ are cylindrical, and hence are referred to as \emph{tube manifolds}~\cite{Koon2000a}. 
\begin{figure}[!ht]
	\centering
	\subfigure[]{\includegraphics[scale=0.25]{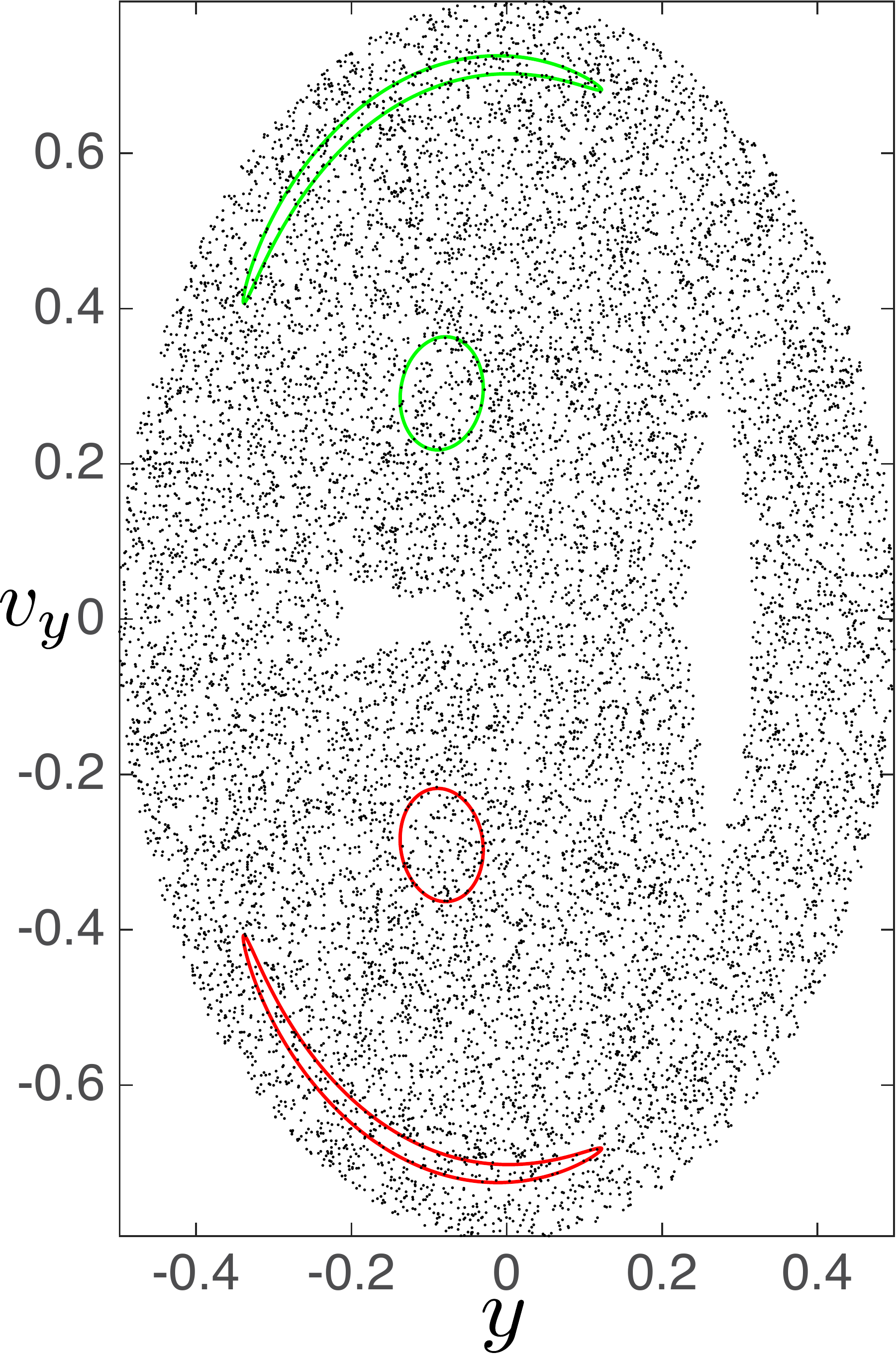}}
	\subfigure[]{\includegraphics[scale=0.26]{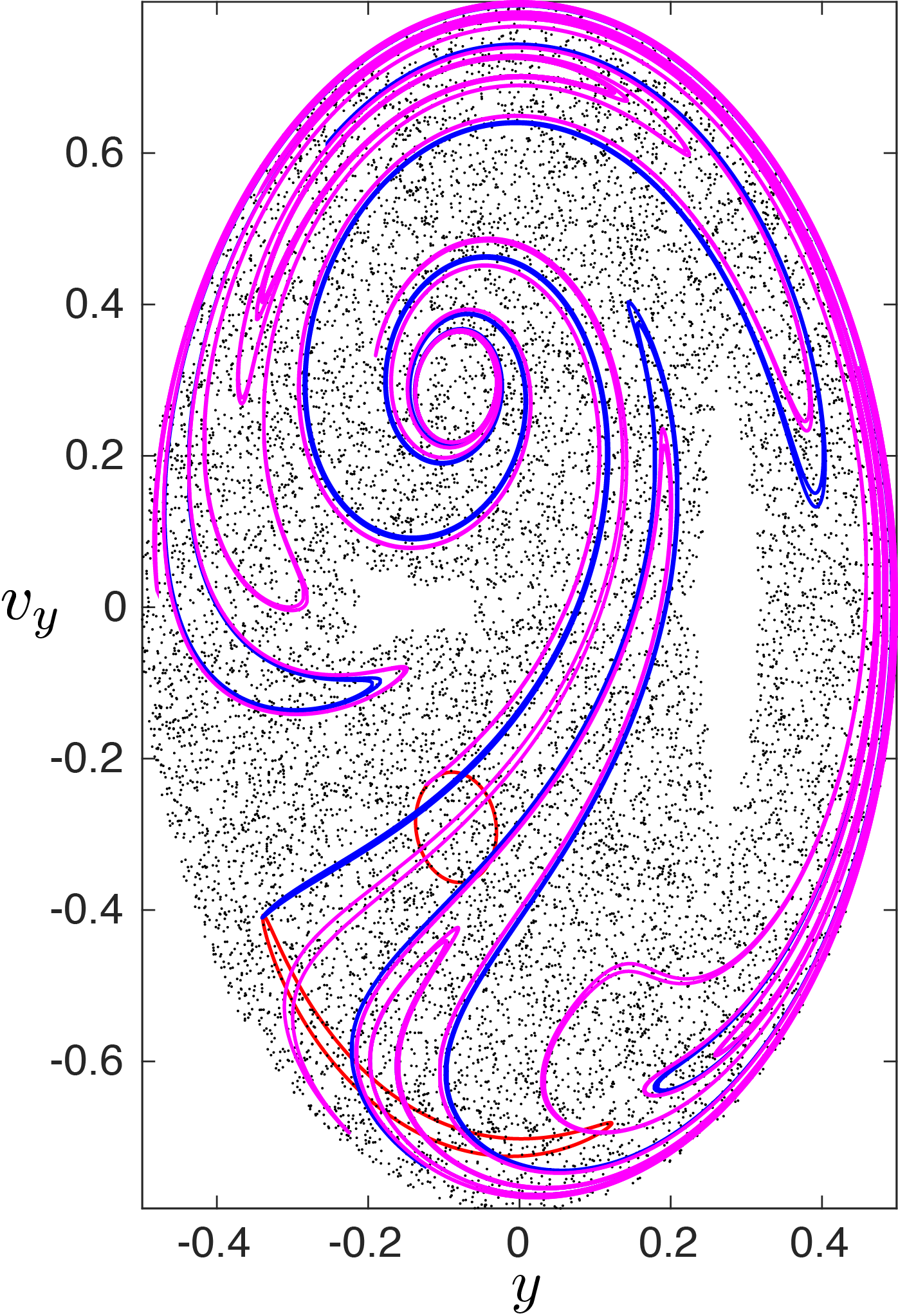}} 
	\caption{(a) Shows the tube manifolds as closed curves on the Poincar\'e SOS, $U_1$~\eqref{eqn:sos_U1}. The green and red curves denote stable and unstable manifolds, respectively, of right and left saddle equilibrium points. The black dots correspond to the the trajectories at the critical energy that intersect the SOS. (b) Shows the pre-images of the unstable manifolds as magenta curves. We have used $\Delta e=0.00307$ above the critical energy for the escape rate computations.}
	\label{fig:sos_U1_tube}
\end{figure}

\textbf{d.}  We obtain the first intersection of the unstable and stable tube manifolds with the Poincar\'e SOS, Eqn.~\eqref{eqn:sos_U1}, and is shown as red and green curves in the Fig.~\ref{fig:sos_U1_tube}(a). By the geometry of the stable tube manifolds, the first intersection is the boundary of the trajectories that lead to imminent escape, that is, they do not return to the surface and lead to escape via the right or left bottlenecks. Similarly, the unstable tube manifolds lead to imminent entry, that is, the trajectories entering the potential well via the respective bottlenecks and intersecting the SOS. 

Using the boundary of these intersections and the pre-images under the map, Eqn.~\eqref{eqn:return_map2D}, we can partition the SOS into regions with different exit time. This time is measured as the number of intersections with the SOS a trajectory undergoes before exiting or after entering the potential well. Hence, we can calculate what fraction of the set of trajectories will lead to escape by computing the area of intersection of pre-images of the first intersection with itself. We denote the first intersection of the stable tube manifolds with the SOS by $\Gamma^s_r$ and $\Gamma^s_l$, where the subscripts denote exit via the right and left bottlenecks, respectively. The geometry of the manifolds tells us that the trajectories that lead to imminent capsize in $n$ iterates of the return map, Eqn.~\ref{eqn:return_map2D}, must be inside either $\Gamma^s_r$ or $\Gamma^s_l$ in the $n^{\textrm{th}}$ pre-image. Thus, the trajectories that lead to escape in $n$ after starting inside the unstable tube manifold's first intersection will be inside the $n^{\textrm{th}}$ pre-image of the first intersection, say $g^{-n}(\Gamma^u_r)$, and $\Gamma^s_r$. The results of this computation are shown in Table.~\ref{tab:escape_2DOF} as the fraction of trajectories that start inside $\Gamma^u_r$ or $\Gamma^u_l$.

\begin{table}[!ht]
	\centering
	\begin{tabular}{|c|c|c|}
		\hline
		Exit via left after intersection \# & Entrance via left & Entrance via right \\
		\hline
		1 &  0\% & 0\% \\
		2 &  0\% & 11.5\% \\
		3 &  2.93\% & 0.016\% \\
		4 & 1.87\% & 1.441\% \\
		\hline
		Exit via right after intersection \# & Entrance via left & Entrance via right \\
		\hline
		1 &  0\% & 0\% \\
		2 &  0\% & 0\% \\
		3 &  11.2\% & 2.90\% \\
		4 &  0.0246\% & 0.278\% \\
		\hline
	\end{tabular}
	\caption{Percentage of trajectories escaping via left/right stable tube that entered via left and right unstable tubes.}
	\label{tab:escape_2DOF}
\end{table}

The rate of escape computation based on geometry of manifolds is advantageous, and the numerical methods presented herein make some progress in this direction. This becomes much more desirable when considering the diverse applications of this approach to chemical physics~\cite{Jaffe1999}, celestial mechanics~\cite{jaffe2002statistical}, and ship capsize~\cite{Naik2017}. 

\section{Conclusion}
Lobe dynamics describes global transport in terms of lobes, parcels of phase space bounded by stable and unstable invariant manifolds associated to hyperbolic fixed points of the system. 
Escape from a potential well describes the phase space structures that lead to critical events in dynamical systems by crossing of partial barriers around rank-1 saddle equilibrium points.
Both of these frameworks---in the circumstances where the dynamics can be reduced to two-dimensional maps---require computation of curves with high density of points, their intersection points, and area bounded by these curves to quantify phase space transport.  

In this article, we developed methods for quantifying phase space transport that requires computing area bounded between curves. In general, these curves may be obtained from a higher dimensional finite-time dynamical system by performing a suitable reduction to two-dimensional map. We developed this method by using a theory for classification of intersection points that involves partitioning of the intersection points into equivalence classes. This enabled application of the discrete form of Green's theorem to compute the area bounded between the segments of the curves. An alternate method for curves with non-transverse intersections---related to identifying lobes by iterating a boundary parametrized by the boundary intersection point---was also presented along with a method to insert points in the curve for increasing the density of points near intersections. This code for increasing the resolution of a curve is implemented in \emph{Curve\_densifier} using an interpolation and insertion method developed for contour surgery in Ref.~\cite{Dritschel1988}, and is made available as open-source repository in Github (\url{https://github.com/Shibabrat/curve_densifier}). In the case of non-transverse intersection, the notion of primary intersection point and secondary intersection point is not suitable and hence, we propose a generalization of the notion of lobes for two intersecting curves using the concept of interior function. The software package \emph{Lober} based on the theory and numerical methods presented in \S~\ref{sect:curve-area},\ref{sect:inter-pts-lobes}, and \ref{sect:non-trans-inter} is made available as an open-source repository in Github (\url{https://github.com/shibabrat/lober}).
We applied these methods for computation of intersection points between two closed curves, lobes defined by these intersections, and the lobe areas to problems in chaotic transport in fluid flow and escape from a potential well in the context of ship capsize---which are reducible to transport in two-dimensional maps. 
Furthermore, the methods presented in this article can also be used in conjunction with homotopic lobe dynamics technique that is an extension of transport in two-dimensional maps to higher dimensional systems in Ref.~\cite{Maelfeyt2017}. 
\section{Acknowledgements}
%
This work was supported in part by the National Science Foundation under awards 1150456, 1520825, and 1537349. The authors would like to thank the anonymous reviewers for their constructive and fruitful suggestions.

\bibliographystyle{plainnat}
\bibliography{refs}

	\appendix
	\setcounter{equation}{0}
	\numberwithin{equation}{section} 
	\numberwithin{figure}{section}
	\renewcommand{\thesection}{Appendix \Alph{section}}
	\section{Usage and outputs}\label{sect:usage}
	This section provides low level details for using the package \textit{Lober}, and processing the output of intersection points, lobes, and lobe areas. There are two primary numerical methods that are implemented in the package, and which are derived in \S\ref{sect:inter-pts-lobes} and \S\ref{sect:non-trans-inter}. These two methods deal with transverse and non-transverse intersections, and computes the intersection points and area enclosed by the piecewise linear segments. The transverse intersection module is activated when the syntax 
	\begin{verbatim}
	lober <c1> <c2> <rslt> [ -DENS <nPass> <nDens> ]
	\end{verbatim} 
	is entered at the terminal. While, the module for non-transverse intersection is activated when {\em -light} is added to the syntax is the form 
	\begin{verbatim}
	lober -light <c1> <c2> <rslt> [ -DENS <nPass> <nDens> ]
	\end{verbatim} 
	where \verb+<c1>+ and \verb+<c2>+ are the names of the files containing the curves $C_1$ and $C_2$, respectively, and \verb+<rslt>+ is the file to be created by \emph{Lober} to save the output. The optional arguments of the built-in \emph{densifier} (\verb+-DENS <nPass> <nDens>+) has been described in the \S\ref{subsubsect:densifier}. The input curves need to be specified in the files in a {\em Tecplot} ASCII format given by
	\begin{verbatim}
	VARIABLES=''x''''y''
	ZONE T=''the curve C1''
	0.2    0.4
	0.23   0.45
	0.35   0.35
	...
	\end{verbatim}
	The output file (\verb <rslt>) contains one line with 4 numbers: the area of the lobes inside, the area of the lobes outside and the relative error on these two values. This is useful to get an estimate of the error involved in computing the area, and provides a first order check of the output. In addition to this, \textit{Lober} generates 6 output files in {\em Tecplot} ASCII format with one header line and points arranged in $N \times 2$ array of $N$ points in $\mathbb{R}^2$. The intersection points are stored in files \verb c10.dat and \verb c20.dat . The set of points which are on the boundary of $C_1 \cap C_2$ and $C_2 \cap C_1$ are stored in \verb c11.dat and \verb c22.dat, respectively, and those on the boundary of $C_1 \setminus C_2$, $C_2 \setminus C_1$, are stored in \verb c12.dat and \verb c21.dat, respectively.
	

\end{document}